\newtheorem{theorem}{Theorem}[section]
\newtheorem{lemma}[theorem]{Lemma}
\newtheorem{corollary}[theorem]{Corollary}
\newtheorem{proposition}[theorem]{Proposition}
\theoremstyle{remark}
\theoremstyle{definition}
\newtheorem{definition}[theorem]{Definition}
\numberwithin{equation}{section}
\begin{document}

\title[Quantum measurable cardinals]{Quantum measurable cardinals}

\date{\today}

\author{David P. Blecher}
\address{Department of Mathematics, University of Houston, Houston, TX
77204-3008}
\email[David P. Blecher]{dblecher@math.uh.edu}
\author{Nik Weaver} 
\address{Department of Mathematics, Washington University, Saint Louis, MO
63130}
\email[Nik Weaver]{nweaver@math.wustl.edu}

\subjclass[2010]{46L10, 46L30, 03E55, 03E75}

\begin{abstract}
We investigate states on von Neumann algebras which are not normal but enjoy
various forms of infinite additivity, and show that these exist on $B(H)$ if
and only if the cardinality of an orthonormal basis of $H$ satisfies various
large cardinal conditions. For instance, there is a singular countably
additive pure state on $B(l^2(\kappa))$ if and only if $\kappa$ is Ulam
measurable, and there is a singular ${<}\,\kappa$-additive pure state on
$B(l^2(\kappa))$ if and only if $\kappa$ is measurable. The proofs make use
of Farah and Weaver's theory of quantum filters \cite{FW}. We can generalize
some of these characterizations to arbitrary von Neumann algebras.
Applications to Ueda's peak set theorem for von Neumann algebras are discussed
in the final section.
\end{abstract}

\maketitle

\vspace{.25in}

\section{Measurable cardinals}

In a recent paper \cite{blueda} Blecher and Labuschagne investigated
whether every von Neumann algebra verifies Ueda's peak set theorem
\cite{Ueda}. It was discovered that the answer turns on the existence
of singular states with a certain continuity property. Such states, which
falsify the von Neumann algebra version of Ueda's theorem, exist on
$l^\infty(\kappa)$ and $B(l^2(\kappa))$ provided $\kappa$ has a large
cardinal property related to measurability. (Throughout this paper
$\kappa$ denotes an infinite cardinal.) Similar conditions on states
were studied in the context of axiomatic von Neumann algebra quantum
mechanics in e.g.\ \cite{Ham0, BH}. This motivated us to consider the
general question of the existence of singular states on $B(H)$ with
various continuity properties. We will return to Ueda's theorem in the
final section of the paper.

In set theory there is an elaborate hierarchy of ``large cardinal'' properties,
some of which involve various notions of measurability \cite{Drake, Jech}.
Four of these are of primary interest to us here. An uncountable cardinal
$\kappa$ is said to be
\begin{itemize}
\item {\em measurable} if there is a nonzero ${<}\,\kappa$-additive
$\{0,1\}$-valued measure on $\kappa$ which vanishes on singletons

\item {\em real-valued measurable} if there is a ${<}\,\kappa$-additive
probability measure on $\kappa$ which vanishes on singletons

\item {\em Ulam measurable} if there is a nonzero countably additive
$\{0,1\}$-valued measure on $\kappa$ which vanishes on singletons

\item {\em Ulam real-valued measurable} if there is a countably additive
probability measure on $\kappa$ which vanishes on singletons.
\end{itemize}
Here a ``measure on $\kappa$'' is understood to be defined on all subsets
of $\kappa$, and ``${<}\,\kappa$-additive'' means ``additive on any family
of fewer than $\kappa$ disjoint sets''. In a set theory context the latter
property would normally be called ``$\kappa$-additive'', but that becomes
awkward in the case of countable additivity, so we shall use this slightly
modified terminology.

No cardinal of any of these types can be proven to exist in ordinary set
theory, assuming ordinary set theory is consistent. They are ``large'' in
the sense that the
smallest real-valued measurable or Ulam real-valued measurable cardinal, if
one exists, must be weakly inaccessible, and the smallest measurable or
Ulam measurable cardinal, if one exists, must be strongly inaccessible.
It is generally believed that the existence of such cardinals is
consistent with ordinary set theory. However, this, if true, could not
be proven within ordinary set theory. What we can say is that they all
have the same consistency strength, i.e., the consistency of any of the
theories
\begin{itemize}
\item[] ZFC + ``a measurable cardinal exists''
\item[] ZFC + ``a real-valued measurable cardinal exists''
\item[] ZFC + ``an Ulam measurable cardinal exists''
\item[] ZFC + ``an Ulam real-valued measurable cardinal exists''
\end{itemize}
implies the consistency of each of the others.

We summarize the basic relations among these notions. Every measurable
cardinal is real-valued measurable, and among cardinals $> 2^{\aleph_0}$
the two notions coincide. However, no measurable cardinal can be
$\leq 2^{\aleph_0}$, whereas if it is consistent that a measurable cardinal
exists then it is consistent that there is a real-valued measurable
cardinal $\leq 2^{\aleph_0}$. A cardinal is Ulam measurable if and only
if it is greater than or equal to some measurable cardinal, and a
cardinal is Ulam real-valued measurable if and only if it is greater
than or equal to some real-valued measurable cardinal. 

\section{Quantum measurability conditions}

C*-algebra and related subjects are often viewed as ``noncommutative'' or
``quantum'' analogs of classical mathematical topics, and there is a fairly
extensive body of knowledge regarding the problem of translating classical
concepts into the quantum realm. The set-theoretic aspect of this analogy
has attracted recent attention \cite{Weav1, FW}. One of the main points of
this paper is that there are good, natural quantum versions of the various
notions of measurability discussed in the last section.

It is a standard fact that a state $\phi$ on a von Neumann algebra is
normal (that is, weak* continuous) if and only if it is completely additive,
i.e., any family $\{p_\alpha\}$ of mutually orthogonal projections satisfies
$\phi(\sum p_\alpha) = \sum \phi(p_\alpha)$ \cite[Corollary 3.11]{Tak}.
Of course every state is automatically finitely additive on projections
in the sense that the preceding holds for any finite family. There is a
natural range of intermediate conditions which interpolate between these
two extremes.

\begin{definition}\label{kappaadd}
A state $\phi$ on a von Neumann algebra $M$ is {\em countably additive} if 
$\phi(\sum p_n) = \sum \phi(p_n)$ for any countable family
$\{p_n\}$ of mutually orthogonal projections.

More generally, for any infinite cardinal $\kappa$, a state $\phi$ is
{\em ${<}\,\kappa$-additive} if for every $\lambda < \kappa$ and every family
$\{p_\alpha: \alpha \in \lambda\}$ of $\lambda$ many mutually orthogonal
projections in $M$, we have
$\phi(\sum p_\alpha) = \sum \phi(p_\alpha)$.
\end{definition}

Any cardinal is the set of all smaller ordinals, so the condition
``$\alpha \in \lambda$'' means that $\alpha$ ranges over all ordinals of
cardinality smaller than $\lambda$. Again, we say ``${<}\,\kappa$-additive''
instead of ``$\kappa$-additive'' because it meshes better with the
term ``countably additive''.

This concept has a simple alternative formulation. Say that a von Neumann
algebra $M$ is {\em ${<}\,\kappa$-decomposable} if every family of mutually
orthogonal nonzero projections has cardinality strictly less than $\kappa$.

\begin{proposition}\label{addvsnormal}
Let $\phi$ be a state on a von Neumann algebra $M$ and let 
$\kappa$ be an infinite cardinal. The following are equivalent:
\begin{itemize}
\item[(i)] $\phi$ is ${<}\,\kappa$-additive
\item[(ii)] the restriction of $\phi$ to any ${<}\,\kappa$-decomposable von
Neumann subalgebra of $M$ is normal
\item[(iii)] the restriction of $\phi$ to any atomic abelian
${<}\,\kappa$-decomposable von Neumann subalgebra of $M$ is normal.
\end{itemize}
\end{proposition}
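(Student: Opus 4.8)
The plan is to run the cycle of implications (i) $\Rightarrow$ (ii) $\Rightarrow$ (iii) $\Rightarrow$ (i), using throughout the quoted characterization that a state on a von Neumann algebra is normal precisely when it is completely additive on projections \cite[Corollary 3.11]{Tak}. The guiding observation is that the hypothesis of ${<}\,\kappa$-additivity controls orthogonal families of size less than $\kappa$, while ${<}\,\kappa$-decomposability forces \emph{every} orthogonal family inside the relevant subalgebra to have size less than $\kappa$; matching these two facts is the whole content.

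For (i) $\Rightarrow$ (ii), let $N$ be a ${<}\,\kappa$-decomposable von Neumann subalgebra and let $\{q_\beta\}$ be any family of mutually orthogonal projections in $N$. Discarding the zero projections, the remaining family has cardinality $\lambda < \kappa$ by ${<}\,\kappa$-decomposability, so re-indexing by an ordinal of that cardinality and applying (i) yields $\phi(\sum q_\beta) = \sum \phi(q_\beta)$. Because $N$ is weak* closed in $M$, the supremum $\sum q_\beta$ is the same whether computed in $N$ or in $M$ (in both cases it is the strong limit of the increasing net of finite partial sums), so this computation shows $\phi|_N$ is completely additive and hence normal.

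The implication (ii) $\Rightarrow$ (iii) is immediate, since atomic abelian ${<}\,\kappa$-decomposable subalgebras form a subclass of the subalgebras appearing in (ii). For (iii) $\Rightarrow$ (i), fix $\lambda < \kappa$ and mutually orthogonal projections $\{p_\alpha : \alpha \in \lambda\}$, and let $N$ be the von Neumann subalgebra they generate. The $p_\alpha$ commute, so $N$ is abelian; setting $p_* = 1 - \sum_\alpha p_\alpha$, one identifies $N$ with $l^\infty$ over the index set $\{p_\alpha : \alpha \in \lambda\} \cup \{p_*\}$ of its minimal projections, so $N$ is atomic. Its set of atoms has cardinality at most $\lambda + 1$, and since $\kappa$ is infinite and $\lambda < \kappa$ we have $\lambda + 1 < \kappa$ (both in the case that $\lambda$ is finite and in the case that $\lambda$ is infinite, where $\lambda + 1 = \lambda$); as any orthogonal family of nonzero projections in an atomic algebra has cardinality at most its number of atoms, $N$ is ${<}\,\kappa$-decomposable. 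Now (iii) gives that $\phi|_N$ is normal, hence completely additive, and in particular $\phi(\sum p_\alpha) = \sum \phi(p_\alpha)$, which is (i).

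I do not expect a serious obstacle here; the proposition is essentially a matter of correctly aligning the two finiteness conditions. The points that require care are the structural claim that the von Neumann algebra generated by a family of mutually orthogonal projections is atomic with atom set the projections together with the complementary projection $1 - \sum p_\alpha$, and the small cardinal-arithmetic verification that $\lambda < \kappa$ forces $\lambda + 1 < \kappa$ for infinite $\kappa$. One should also keep track of the fact that suprema of projections and the associated sums transfer between $N$ and $M$, which is what lets the additivity hypotheses and conclusions be read interchangeably in the two algebras.
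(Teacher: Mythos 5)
Your proof is correct and follows essentially the same route as the paper's: the same cycle (i) $\Rightarrow$ (ii) $\Rightarrow$ (iii) $\Rightarrow$ (i), with (ii) $\Rightarrow$ (iii) trivial, (i) $\Rightarrow$ (ii) by reducing complete additivity on a ${<}\,\kappa$-decomposable subalgebra to ${<}\,\kappa$-additivity, and (iii) $\Rightarrow$ (i) by observing that the von Neumann subalgebra generated by the given orthogonal family is an atomic abelian ${<}\,\kappa$-decomposable copy of $l^\infty$. Your extra care with the complementary projection $1 - \sum p_\alpha$, the cardinal arithmetic $\lambda + 1 < \kappa$, and the agreement of suprema in $N$ and $M$ only fills in details the paper leaves implicit.
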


\begin{proof}
(i) $\Rightarrow$ (ii) \ If $\phi$ is ${<}\,\kappa$-additive, then its
restriction to any $\kappa$-decomposable von Neumann subalgebra of $M$
is completely additive and therefore normal.

(ii) $\Rightarrow$ (iii) \ Vacuous.

(iii) $\Rightarrow$ (i) \  Assume the restriction of $\phi$ to any atomic
abelian $\kappa$-decomposable von Neumann subalgebra of $M$ is normal and let
$\{p_\alpha: \alpha \in \lambda\}$ be a family of mutually orthogonal nonzero
projections, where $\lambda < \kappa$. Then the von Neumann subalgebra
the $p_\alpha$ generate is isomorphic to $l^\infty(\lambda)$, which is atomic,
abelian, and ${<}\, \kappa$-decomposable, so by hypothesis $\phi$ is normal
on it. Thus $\phi(\sum p_\alpha) = \sum \phi(p_\alpha)$.
\end{proof}

There are natural and easy translations of the various notions of
measurability under discussion into the language of states on atomic
abelian von Neumann algebras, in which the notion of a
${<}\,\kappa$-additive state plays a key role. A state on
$l^\infty(\kappa)$ is {\em singular} if it vanishes on all finitely
supported functions. In the next result we use integrals over finitely
additive measures; for a full treatment of this topic see \cite{Rao}.
However, for our purposes all that is needed is the observation that
if $\mu$ is a finitely additive probability measure on $\kappa$, then
there is an obvious way to define the integral of any simple function against
$\mu$, and this yields a positive norm one linear functional on the set of
all simple functions. As the simple functions are norm dense in
$l^\infty(\kappa)$, this extends to a state on $l^\infty(\kappa)$.

\begin{proposition}\label{measvsstate}
An uncountable cardinal $\kappa$ is
\begin{itemize}

\item[(i)] Ulam real-valued measurable if and only if there
is a singular countably additive state on $l^\infty(\kappa)$

\item[(ii)] Ulam measurable if and only if there is a
singular countably additive pure state on $l^\infty(\kappa)$

\item[(iii)] real-valued measurable if and only if there
is a singular ${<}\,\kappa$-additive state on $l^\infty(\kappa)$

\item[(iv)] measurable if and only if there is a singular
${<}\,\kappa$-additive pure state on $l^\infty(\kappa)$.
\end{itemize}
\end{proposition}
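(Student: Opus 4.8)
The plan is to set up a dictionary between states on $l^\infty(\kappa)$ and finitely additive probability measures on $\kappa$, under which all four equivalences become immediate. Given a state $\phi$, I would define a set function $\mu$ on the power set of $\kappa$ by $\mu(A) = \phi(\chi_A)$, where $\chi_A \in l^\infty(\kappa)$ is the indicator of $A$. Positivity and unitality of $\phi$ make $\mu$ a finitely additive probability measure, finite additivity following from linearity applied to $\chi_{A \sqcup B} = \chi_A + \chi_B$. Conversely, the integration construction recalled just before the statement turns any finitely additive probability measure into a state, and these two passages are mutually inverse because each side is determined by its values on indicators and the simple functions are norm dense. I would record this bijection first.

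Next I would translate the three adjectives. A function in $l^\infty(\kappa)$ is finitely supported precisely when it is a finite linear combination of indicators of singletons, so $\phi$ is \emph{singular} if and only if $\mu(\{\alpha\}) = 0$ for every $\alpha$; by finite additivity this is the same as $\mu$ vanishing on all finite sets, matching the phrase ``vanishes on singletons'' in the cardinal definitions. For purity I would use that the pure states of a commutative C*-algebra are exactly its characters: if $\phi$ is multiplicative then $\mu(A) = \phi(\chi_A) = \phi(\chi_A^2) = \mu(A)^2$, forcing $\mu$ to be $\{0,1\}$-valued, and conversely a $\{0,1\}$-valued $\mu$ is multiplicative on indicators, hence on simple functions, hence by density $\phi$ is a character. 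Thus \emph{pure} states correspond to $\{0,1\}$-valued measures.

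Finally I would handle the additivity clauses. The projections of $l^\infty(\kappa)$ are precisely the indicators $\chi_A$, and a family of mutually orthogonal projections corresponds to a family of pairwise disjoint sets $A_\alpha$, whose join $\sum p_\alpha$ is the indicator of $\bigcup_\alpha A_\alpha$. Hence $\phi(\sum p_\alpha) = \sum \phi(p_\alpha)$ says exactly that $\mu(\bigcup_\alpha A_\alpha) = \sum_\alpha \mu(A_\alpha)$, so \emph{countable additivity} of $\phi$ matches countable additivity of $\mu$ and \emph{${<}\,\kappa$-additivity} of $\phi$ matches ${<}\,\kappa$-additivity of $\mu$. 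Combining these three translations with the bijection of the first paragraph yields all four equivalences simultaneously, since each listed cardinal property is by definition the assertion that a measure on $\kappa$ with the corresponding combination of properties exists. The only point requiring care is the correct reading of the infinite sum $\sum p_\alpha$ as the supremum of the projections, equivalently the indicator of the union, together with the check that the value-side unordered sum $\sum \phi(p_\alpha)$ of nonnegative reals is the same sum appearing in measure-theoretic additivity; once this is pinned down the argument is routine bookkeeping rather than a genuine obstacle.
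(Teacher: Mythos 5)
Your proposal is correct and takes essentially the same route as the paper: the bijective dictionary between states on $l^\infty(\kappa)$ and finitely additive probability measures, under which singularity, countable and ${<}\,\kappa$-additivity, and purity all translate term by term. The one place you add substance is the purity step: the paper simply asserts as an ``easy fact'' that a state on $l^\infty(\kappa)$ is pure if and only if it takes only the values $0$ and $1$ on projections, whereas you derive the pure state $\leftrightarrow$ $\{0,1\}$-valued measure correspondence from the standard characterization of pure states on commutative C*-algebras as characters --- a worthwhile addition, since that unproved assertion is exactly the point the authors themselves flagged as needing justification.
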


\begin{proof}
(i) \ There is a well-known bijective correspondence between states on
$l^\infty(\kappa)$ and finitely additive probability measures on $\kappa$.
Namely, if $\mu$ is a finitely additive probability measure on $\kappa$
then integration against $\mu$ yields a state on $l^\infty(\kappa)$ in the
manner described above, and if $\phi$ is a state on $l^\infty(\kappa)$ then
$\mu(S) = \phi(\chi_S)$ is a finitely additive probability measure. The
two constructions are inverse to each other and both preserve countable
additivity, i.e., a countably additive state becomes a countably additive
measure and vice versa. (Recall that projections in $l^\infty(\kappa)$
correspond to subsets of $\kappa$.) A finitely additive measure vanishes
on singletons (or equivalently, on finite subsets) if and only if the
corresponding state vanishes on finitely supported functions.

(iii) \ The proof is the same as the proof of (i), replacing the observation
that the two constructions preserve countable additivity with the observation
that they preserve ${<}\,\kappa$-additivity.

(ii) and (iv) \ These parts follow from (i) and (iii) plus the easy fact that a
state on $l^\infty(\kappa)$ is pure if and only if its value on any projection
is either 0 or 1.   
\end{proof}

A state on $B(l^2(\kappa))$ is {\em singular} if it vanishes on all rank 1
projections, or equivalently, if it vanishes on the compact operators. The
following theorem is our main result.

\begin{theorem}\label{maintheorem}
An uncountable cardinal $\kappa$ is
\begin{itemize}
\item[(i)] Ulam real-valued measurable if and only if there is a
singular countably additive state on $B(l^2(\kappa))$

\item[(ii)] Ulam measurable if and only if there is a
singular countably additive pure state on $B(l^2(\kappa))$

\item[(iii)] real-valued measurable if and only if there is a
singular ${<}\,\kappa$-additive state on $B(l^2(\kappa))$

\item[(iv)] measurable if and only if there is a singular
${<}\,\kappa$-additive pure state on $B(l^2(\kappa))$.
\end{itemize}
\end{theorem}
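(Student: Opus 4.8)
The plan is to reduce everything to the abelian statement already proved in Proposition \ref{measvsstate} by passing between states on $B(l^2(\kappa))$ and states on the diagonal copy of $l^\infty(\kappa)$. Write $D \cong l^\infty(\kappa)$ for the MASA of operators diagonal with respect to the standard orthonormal basis $\{e_\gamma\}$, and let $E \colon B(l^2(\kappa)) \to D$ be the (unital, normal) diagonal conditional expectation, $E(T) = \sum_\gamma \langle Te_\gamma, e_\gamma\rangle\, e_{\gamma\gamma}$. Two elementary observations drive the argument. First, restricting a state on $B(l^2(\kappa))$ to $D$ preserves both singularity (the diagonal matrix units $e_{\gamma\gamma}$ are rank one, so a singular state kills the atoms of $D$) and ${<}\,\kappa$-additivity (orthogonal projections in $D$ are orthogonal in $B(l^2(\kappa))$). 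Second, every rank-one projection $P = \xi\xi^*$ satisfies $P \le \chi_{\mathrm{supp}\,\xi}$, a diagonal projection supported on a countable set; hence any state whose restriction to $D$ is countably additive and vanishes on singletons is automatically singular on all of $B(l^2(\kappa))$.

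For the non-pure parts (i) and (iii) this suffices. The forward implication is the restriction map above composed with Proposition \ref{measvsstate}. For the converse, given the measure furnished by Proposition \ref{measvsstate}, let $\psi$ be the corresponding singular state on $l^\infty(\kappa)$ and set $\phi = \psi \circ E$. Singularity of $\phi$ is immediate from the domination observation. The one real computation is ${<}\,\kappa$-additivity: for orthogonal projections $\{p_\alpha : \alpha \in \lambda\}$ with $\lambda < \kappa$ and $Q = \sum p_\alpha$, normality of $E$ gives $\phi(Q) = \psi\big(\sum_\alpha E(p_\alpha)\big)$, and writing $\psi$ as integration against the underlying ${<}\,\kappa$-additive measure $\mu$ one must interchange $\sum_\alpha$ with $\int d\mu$. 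Here ${<}\,\kappa$-additivity of $\mu$ is used twice: the countably many indices $\alpha$ with $\int E(p_\alpha)\,d\mu > 0$ are handled by ordinary monotone convergence, while the remaining (up to $\lambda$ many) indices each contribute a null set, and their union is again null because $\mu$ is ${<}\,\kappa$-additive. Thus the interchange is valid and $\phi$ is ${<}\,\kappa$-additive; the countably additive case is identical with $\lambda = \aleph_0$.

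The pure parts (ii) and (iv) are the substantive ones, because neither restriction to $D$ nor the expectation construction respects purity: indeed $\psi \circ E$ annihilates every off-diagonal isometry and so is never multiplicative. For the converse direction I would instead invoke the pure state extension theorem: Proposition \ref{measvsstate} yields a ${<}\,\kappa$-complete (resp.\ countably complete) free ultrafilter $\mathcal{U}$, hence a character $\chi_{\mathcal U}$ on $D$, and any pure state of $D$ extends to a pure state $\phi$ of $B(l^2(\kappa))$. Such a $\phi$ is singular for free by the domination observation. For the forward direction one passes from the pure state $\phi$ to the maximal quantum filter $\{p : \phi(p) = 1\}$ of Farah--Weaver \cite{FW} and reads off a ${<}\,\kappa$-complete ultrafilter, equivalently a $\{0,1\}$-valued measure, from its completeness properties.

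The hard part will be showing that the pure extension $\phi$ of $\chi_{\mathcal U}$ inherits ${<}\,\kappa$-additivity, and dually that the quantum filter extracted from a ${<}\,\kappa$-additive pure state is genuinely ${<}\,\kappa$-complete. Unlike singularity, additivity is a statement about arbitrary orthogonal families of projections, not merely diagonal ones, so the domination trick no longer applies and the extension cannot be controlled by its values on $D$ alone. Here I expect to recast the additivity requirement through Proposition \ref{addvsnormal}, reducing it to normality of $\phi$ on each atomic abelian ${<}\,\kappa$-decomposable subalgebra, and then to use the completeness of the quantum filter associated to $\phi$ --- the technical heart of the theory in \cite{FW} --- to verify normality on every such subalgebra. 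Matching the completeness of the quantum filter to the ${<}\,\kappa$-additivity of the state, in both directions, is the crux of the whole theorem.
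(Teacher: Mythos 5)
Your treatment of parts (i) and (iii) is essentially the paper's own proof (restriction to the diagonal in one direction, $\psi\circ E$ in the other); your handling of the sum--integral interchange, splitting off the countably many indices with positive integral and absorbing the rest into a single $\mu$-null set, is a legitimate variant of the paper's transfinite monotone convergence lemma (Lemma \ref{summing}). The problems are in (ii) and (iv), where your outline has two genuine gaps --- and they are precisely the two places where the paper has to do real work.

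First, the forward directions. You discard the construction $\psi\circ E$ on the grounds that it ``annihilates every off-diagonal isometry and so is never multiplicative,'' but this conflates purity with multiplicativity: pure states on a noncommutative algebra are not characters, so non-multiplicativity is no obstruction. In fact $\psi\circ E$ \emph{is} pure, and this is how the paper proceeds: by the generalized Kadison--Singer theorem (Theorem \ref{kadsing}, obtained from the Marcus--Spielman--Srivastava paving theorem plus a compactness argument to pass to uncountable $\kappa$), a pure state on the diagonal masa has a unique state extension, and a unique extension of a pure state is automatically pure; since $\psi\circ E$ restricts to $\psi$ on the diagonal, it is that extension. Your substitute --- an arbitrary pure extension $\phi$ of the character $\chi_{\mathcal{U}}$ --- is indeed singular by your domination observation, but you give no argument that it is countably or ${<}\,\kappa$-additive, and none is available: the values of an arbitrary extension on non-diagonal projections are uncontrolled, and the only known way to control them is the same uniqueness theorem, which collapses your construction back to $\chi_{\mathcal{U}}\circ E$. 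So the forward direction is missing exactly the Kadison--Singer ingredient.

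Second, the converse directions. Your plan to ``read off a ${<}\,\kappa$-complete ultrafilter'' from $\mathcal{F}_\phi$ presumes that $\phi(\chi_S)\in\{0,1\}$ for every diagonal projection $\chi_S$, i.e.\ that the diagonal projections lie in the multiplicative domain of $\phi$, equivalently that $\phi$ restricts to a \emph{pure} state of the masa. That is essentially Anderson's conjecture, which the paper emphasizes is false under CH (and weaker hypotheses), so it cannot be assumed; the family $\{S\subseteq\kappa:\phi(\chi_S)=1\}$ is a countably complete filter but there is no reason it is an ultrafilter, and the induced $\{0,1\}$-valued set function need not be additive. The paper's actual route avoids this entirely: restriction to the diagonal gives only real-valued measurability (Theorem \ref{rvmeas}), and the new content is Lemma \ref{bigger} --- proved via the quantum-filter/excision machinery of Section 4 culminating in Theorem \ref{weakstar} --- which shows $\kappa>2^{\aleph_0}$, whence $\kappa$ is measurable by the classical dichotomy for real-valued measurable cardinals. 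For part (ii) even this is insufficient: Ulam measurability requires in addition the minimal-cardinal argument of Theorem \ref{ulammeas}, which shows that the least cardinal supporting such a state is itself measurable. The ``crux'' you defer to Proposition \ref{addvsnormal} and unspecified completeness properties of quantum filters is not a technical verification left to the reader; it is the theorem.
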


The four parts of this theorem appear in Theorems \ref{ulamrvmeas},
\ref{rvmeas}, \ref{meas}, and \ref{ulammeas}.

The real-valued variants are substantially easier, so we will present
them first.

\section{Quantum real-valued and Ulam real-valued measurability}

We can immediately prove part (i) of Theorem \ref{maintheorem}.

\begin{theorem}\label{ulamrvmeas}
Let $\kappa$ be an uncountable cardinal. Then $\kappa$ is Ulam real-valued
measurable if and only if there is a singular countably additive state on
$B(l^2(\kappa))$.
\end{theorem}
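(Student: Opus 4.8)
The plan is to transfer everything to $l^\infty(\kappa)$, viewed as the diagonal subalgebra of $B(l^2(\kappa))$ relative to the standard orthonormal basis $\{e_\alpha : \alpha \in \kappa\}$. By Proposition \ref{measvsstate}(i), $\kappa$ is Ulam real-valued measurable if and only if $l^\infty(\kappa)$ carries a singular countably additive state, so it suffices to show that such a state exists on $l^\infty(\kappa)$ if and only if one exists on $B(l^2(\kappa))$. The two directions are a restriction and an extension across the inclusion $l^\infty(\kappa) \subseteq B(l^2(\kappa))$.

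For the backward direction I would simply restrict. Given a singular countably additive state $\psi$ on $B(l^2(\kappa))$, let $\phi$ be its restriction to the diagonal subalgebra $l^\infty(\kappa)$. Mutually orthogonal projections in the diagonal are mutually orthogonal projections of $B(l^2(\kappa))$ with the same sum, so $\phi$ inherits countable additivity. A finitely supported function in $l^\infty(\kappa)$ is a linear combination of the rank one diagonal projections $e_\alpha e_\alpha^*$, on which the singular state $\psi$ vanishes; hence $\phi$ is singular. Proposition \ref{measvsstate}(i) then yields that $\kappa$ is Ulam real-valued measurable.

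For the forward direction I would extend. Let $\phi$ be a singular countably additive state on $l^\infty(\kappa)$ and let $E : B(l^2(\kappa)) \to l^\infty(\kappa)$ be the canonical normal conditional expectation onto the diagonal, sending $T$ to the diagonal operator with entries $\langle T e_\alpha, e_\alpha\rangle$. Put $\psi = \phi \circ E$, which is a state. For singularity, observe that if $p$ is the rank one projection onto a unit vector $\xi$, then $E(p)$ is the function $\alpha \mapsto |\langle \xi, e_\alpha\rangle|^2$, which lies in $l^1(\kappa) \subseteq c_0(\kappa)$; since $\phi$ is norm continuous and annihilates finitely supported functions, it annihilates their norm closure $c_0(\kappa)$, so $\psi(p) = \phi(E(p)) = 0$. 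Thus $\psi$ vanishes on all rank one projections and is singular.

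The one substantive point, and the step I expect to be the main obstacle, is the countable additivity of $\psi = \phi \circ E$. The difficulty is that $E$ does not preserve projections: if $\{q_n\}$ are mutually orthogonal projections with $\sum q_n = q$, then, because $E$ is normal, the positive functions $g_n = E(q_n)$ satisfy $\sum_n g_n = E(q)$ pointwise and monotonically, but the $g_n$ are typically not projections, so additivity of $\phi$ on projections does not apply directly. To bridge this gap I would invoke the correspondence of Proposition \ref{measvsstate}(i): $\phi$ arises from a genuinely countably additive probability measure $\mu$ defined on all subsets of $\kappa$, and $\phi(f) = \int f\,d\mu$ for every $f \in l^\infty(\kappa)$, since every bounded function is $\mu$-measurable and is a uniform limit of simple functions. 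The tails $\sum_{n>N} g_n$ then decrease pointwise to $0$ and are dominated by $1$, so dominated convergence gives $\phi\bigl(\sum_{n>N} g_n\bigr) \to 0$; that is, $\phi(E(q)) = \sum_n \phi(E(q_n))$, which is exactly $\psi(q) = \sum_n \psi(q_n)$. Hence $\psi$ is the desired singular countably additive state on $B(l^2(\kappa))$, completing the equivalence.
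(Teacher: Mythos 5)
Your proposal is correct and follows essentially the same route as the paper: restriction to the diagonal in one direction, and in the other direction composing the normal conditional expectation $E$ onto the diagonal with integration against the countably additive measure from Proposition \ref{measvsstate}(i), with singularity coming from $E$ mapping compacts (or rank one projections) into $c_0(\kappa)$. Your use of dominated convergence on the tails is just a cosmetic variant of the paper's monotone convergence argument, so there is nothing substantively different to compare.
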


\begin{proof}
($\Leftarrow$) \ Suppose $\phi$ is a countably additive state on
$B(l^2(\kappa))$ which vanishes on the compacts. Identifying the diagonal
subalgebra with $l^\infty(\kappa)$, the restriction of $\phi$ to
this subalgebra is a countably additive state on $l^\infty(\kappa)$ which
vanishes on all finitely supported functions. Thus $\kappa$ is Ulam
real-valued measurable by Proposition \ref{measvsstate} (i).

($\Rightarrow$) \ Suppose $\mu$ is a countably additive probability
measure on $l^\infty(\kappa)$ which vanishes on singletons,
and let $\{e_\alpha: \alpha \in \kappa\}$ be the standard
basis of $l^2(\kappa)$. Then the map $E$ taking $A \in B(l^2(\kappa))$
to the sequence $(\langle Ae_\alpha,e_\alpha\rangle)$ is a normal conditional
expectation onto the diagonal subalgebra, which we can again identify with
$l^\infty(\kappa)$. According to Proposition \ref{measvsstate} (i),
integration against $\mu$ is a countably additive state on
$l^\infty(\kappa)$ that vanishes on finitely supported functions,
so $\phi: x \mapsto \int E(x)\, d\mu$ is a state on $B(l^2(\kappa))$.
Since $E$ takes compact operators into $c_0(\kappa)$, $\phi$ vanishes on
the compacts. Finally, if $\{p_n: n \in \mathbb{N}\}$ is any family of
countably many mutually orthogonal projections in $B(l^2(\kappa))$, then the
functions $f_n = E(p_n)$ are positive, and using normality of $E$ and
the monotone convergence theorem, we have
\begin{eqnarray*}
\phi\left(\sum p_n\right)
&=& \int E\left(\sum p_n\right)\, d\mu
\,=\, \int \left(\sum f_n\right)\, d\mu\cr
&=& \sum \int f_n\, d\mu
\,=\, \sum \phi(p_n).
\end{eqnarray*}
This shows that $\phi$ is countably additive.
\end{proof}

The proof of Theorem \ref{maintheorem} (iii) is almost identical to this.
There are only two differences: first, we invoke part (iii) of Proposition
\ref{measvsstate} instead of part (i), and second, in the final computation
we require the following transfinite version of the monotone convergence
theorem.

\begin{lemma}\label{summing}
Let $\kappa$ be an uncountable cardinal and suppose $\mu$ is a
${<}\,\kappa$-additive probability measure on (all subsets of) some set $X$.
Let $\lambda < \kappa$. If $\{f_\alpha: \alpha \in \lambda\}$ is any family of
$\lambda$ many positive elements of $l^\infty(X)$ then
$\int \sum f_\alpha\, d\mu = \sum \int f_\alpha\, d\mu$. 
\end{lemma}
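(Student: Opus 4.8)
The plan is to prove the two inequalities separately, the nontrivial direction being $\int \sum_\alpha f_\alpha\, d\mu \le \sum_\alpha \int f_\alpha\, d\mu$. First I would record that since $\kappa$ is uncountable we have $\aleph_0 < \kappa$, so $\mu$ is in particular countably additive; thus $\mu$ is a genuine countably additive measure on all subsets of $X$, the function $g := \sum_{\alpha \in \lambda} f_\alpha$ is a well-defined $[0,\infty]$-valued function, and its Lebesgue integral $\int g\, d\mu$ obeys all the usual convergence theorems. Writing $g$ as the increasing supremum of its finite partial sums $\sum_{\alpha \in F} f_\alpha$ (over finite $F \subseteq \lambda$) and using monotonicity of the integral gives $\sum_{\alpha} \int f_\alpha\, d\mu \le \int g\, d\mu$ at once, so it remains only to bound $\int g\, d\mu$ from above.

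Set $S = \sum_{\alpha \in \lambda} \int f_\alpha\, d\mu$. If $S = \infty$ the desired equality follows from the inequality just noted, so I may assume $S < \infty$. The main step is a concentration argument. Because a summable family of nonnegative reals has countable support, the set $D = \{\alpha \in \lambda : \int f_\alpha\, d\mu > 0\}$ is countable. For each $\alpha \notin D$ we have $\int f_\alpha\, d\mu = 0$ with $f_\alpha \ge 0$, so the set $N_\alpha = \{x : f_\alpha(x) > 0\}$ is $\mu$-null.

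Here is the heart of the argument, and the step I expect to be the main obstacle: the family $\{N_\alpha : \alpha \in \lambda \setminus D\}$ has fewer than $\kappa$ members, and I must show that its union $N$ is still $\mu$-null. This is exactly where the hypothesis $\lambda < \kappa$ together with ${<}\,\kappa$-additivity is used: after well-ordering $\lambda \setminus D$ and disjointifying the $N_\alpha$, the ${<}\,\kappa$-additivity of $\mu$ yields ${<}\,\kappa$-subadditivity, whence $\mu(N) \le \sum_{\alpha \notin D} \mu(N_\alpha) = 0$. (Note that ordinary countable additivity would not suffice, since $\lambda \setminus D$ may be uncountable; this is the point at which the large-cardinal-flavored additivity genuinely enters.) Off the null set $N$ we have $f_\alpha = 0$ for every $\alpha \notin D$, so $g = \sum_{\alpha \in D} f_\alpha$ almost everywhere. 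As $D$ is countable, the ordinary monotone convergence theorem applies and gives $\int g\, d\mu = \sum_{\alpha \in D} \int f_\alpha\, d\mu = \sum_{\alpha \in \lambda} \int f_\alpha\, d\mu = S$, completing the proof.
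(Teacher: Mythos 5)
Your proof is correct, but it takes a genuinely different route from the paper's. The paper proves the nontrivial inequality $\int \sum f_\alpha\, d\mu \geq \sum \int f_\alpha\, d\mu$ by transfinite induction on $\lambda$: it forms the ordinal-indexed partial sums $g_\alpha = \sum_{\beta < \alpha} f_\beta$, and for each simple minorant $h \leq f$ and each $\epsilon > 0$ applies ${<}\,\kappa$-additivity to the increments of the increasing family $A_\alpha = \{x : g_\alpha(x) > h(x) - \epsilon\}$ to capture measure $1 - \epsilon$ at a single index $\alpha_0$; this is an Egorov-style argument that transfinitely generalizes the classical proof of the monotone convergence theorem and stays entirely inside the finitely additive integration framework (suprema over simple functions). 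You instead reduce to the countable case: countable additivity of $\mu$ is automatic since $\aleph_0 < \kappa$, so classical Lebesgue theory applies; then, in the only nontrivial case $\sum \int f_\alpha\, d\mu < \infty$, all but countably many $f_\alpha$ have integral zero, each such $f_\alpha$ is supported on a $\mu$-null set, and the union of these fewer-than-$\kappa$ null sets is null by ${<}\,\kappa$-additivity (via disjointification); the ordinary countable monotone convergence theorem then finishes. Both arguments invoke ${<}\,\kappa$-additivity exactly once but at different places, and yours isolates very cleanly where it enters (a union of fewer than $\kappa$ null sets is null), besides yielding the extra information that all but countably many of the $f_\alpha$ vanish almost everywhere. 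The one small debt in your version, worth a sentence if written up, is the identification of the paper's finitely additive integral on $l^\infty(X)$ (defined by norm-continuous extension from simple functions) with the Lebesgue integral of the countably additive measure $\mu$; this is immediate since both are norm-continuous positive functionals on $l^\infty(X)$ agreeing on simple functions, every subset of $X$ being measurable. The paper's induction avoids any appeal to the Lebesgue theory but is correspondingly longer.
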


\begin{proof}
Let $f = \sum f_\alpha$. Since $\sum_{\alpha \in S} f_\alpha \leq f$ for any
finite subset $S \subset \lambda$, we immediately have
$$\sum_{\alpha \in S} \int f_\alpha\, d\mu
= \int \sum_{\alpha \in S} f_\alpha\, d\mu \leq \int f\, d\mu,$$
and taking the supremum over $S$ yields
$\sum_{\alpha \in \lambda} \int f_\alpha\, d\mu \leq \int f\, d\mu$.
Conversely, for each $\alpha \in \lambda$ let
$g_\alpha = \sum_{\beta < \alpha} f_\beta$. Inductively assuming that the
lemma holds for families of any smaller cardinality than $\lambda$, we get
$\int g_\alpha\, d\mu = \sum_{\beta < \alpha} \int f_\beta\, d\mu$
for each $\alpha \in \lambda$. So it will suffice to show that
$\lim\int g_\alpha\, d\mu \geq \int f\, d\mu$.

Let $h$ be a simple function satisfying $0 \leq h \leq f$ and let
$\epsilon > 0$. We claim that $\lim\int g_\alpha\, d\mu \geq 
\int h\, d\mu - \epsilon(1 + \|h\|_\infty)$. Since $\epsilon$ is arbitrary
this yields that $\lim\int g_\alpha\, d\mu \geq \int h\, d\mu$, and taking
the supremum over $h$ then yields $\lim\int g_\alpha\, \mu \geq
\int f\, d\mu$.

To prove the claim, for each $\alpha \in \lambda$ define
$$A_\alpha = \{x \in X: g_\alpha(x) > h(x) - \epsilon\}.$$
Then the sets $A_{\alpha+1}\setminus A_\alpha$ partition $X$ and so
${<}\, \kappa$-additivity yields
$$\sum_{\alpha \in \lambda} \mu(A_{\alpha+1}\setminus A_\alpha) = 1.$$
So we can find $\alpha_0 \in \lambda$ such that
$$\mu(A_{\alpha_0}) = 
\sum_{\beta < \alpha_0} \mu(A_{\beta + 1}\setminus A_\beta)
\geq 1 - \epsilon.$$
Then $g_{\alpha_0} > h - \epsilon$ on $A_{\alpha_0}$, and
$\mu(X\setminus A_{\alpha_0}) \leq \epsilon$ implies
$\int_{X\setminus A_{\alpha_0}} h\, d\mu
\leq \epsilon\cdot\|h\|_\infty$, so we have
$$\int_X g_{\alpha_0}\, d\mu \geq \int_{A_{\alpha_0}} g_{\alpha_0}\, d\mu
\geq \int_{A_{\alpha_0}} h\, d\mu - \epsilon
\geq \int_X h\, d\mu - \epsilon(1 + \|h\|_\infty),$$
which proves the claim.
\end{proof}

Thus we obtain

\begin{theorem}\label{rvmeas}
Let $\kappa$ be an uncountable cardinal. Then $\kappa$ is real-valued
measurable if and only if there is a singular ${<}\,\kappa$-additive state
on $B(l^2(\kappa))$.
\end{theorem}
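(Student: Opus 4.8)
The plan is to mirror the proof of Theorem~\ref{ulamrvmeas} essentially verbatim, substituting ${<}\,\kappa$-additivity for countable additivity throughout and replacing the ordinary monotone convergence theorem by Lemma~\ref{summing} at the single place where it enters.

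For the ($\Leftarrow$) direction I would start with a singular ${<}\,\kappa$-additive state $\phi$ on $B(l^2(\kappa))$ and restrict it to the diagonal subalgebra, identified with $l^\infty(\kappa)$. Restriction to a von Neumann subalgebra preserves ${<}\,\kappa$-additivity, since any family of fewer than $\kappa$ mutually orthogonal projections in the subalgebra is such a family in the ambient algebra and their supremum is computed the same way in both; and vanishing on the compacts forces vanishing on the finitely supported diagonal functions. Hence the restriction is a singular ${<}\,\kappa$-additive state on $l^\infty(\kappa)$, and Proposition~\ref{measvsstate}(iii) yields that $\kappa$ is real-valued measurable.

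For the ($\Rightarrow$) direction I would take a ${<}\,\kappa$-additive probability measure $\mu$ on $\kappa$ that vanishes on singletons and use the same normal conditional expectation $E \colon B(l^2(\kappa)) \to l^\infty(\kappa)$ sending $A$ to its diagonal $(\langle A e_\alpha, e_\alpha\rangle)$. By Proposition~\ref{measvsstate}(iii), integration against $\mu$ is a singular ${<}\,\kappa$-additive state on $l^\infty(\kappa)$, so $\phi \colon x \mapsto \int E(x)\, d\mu$ is a state on $B(l^2(\kappa))$, and it vanishes on the compacts because $E$ maps them into $c_0(\kappa)$. The one step that genuinely differs from Theorem~\ref{ulamrvmeas} is the verification of ${<}\,\kappa$-additivity: given $\lambda < \kappa$ and mutually orthogonal projections $\{p_\alpha : \alpha \in \lambda\}$, I would set $f_\alpha = E(p_\alpha) \geq 0$, invoke normality of $E$ to get $E(\sum p_\alpha) = \sum f_\alpha$, and then apply Lemma~\ref{summing} in place of the monotone convergence theorem to conclude
\[
\phi\Bigl(\sum p_\alpha\Bigr) = \int \sum f_\alpha\, d\mu = \sum \int f_\alpha\, d\mu = \sum \phi(p_\alpha).
\]

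The transfinite interchange of summation and integration is where the entire difficulty lies, but it has already been isolated and dispatched in Lemma~\ref{summing}; the only point demanding care is that the lemma applies precisely because the index set has cardinality $\lambda < \kappa$, matching the hypothesis that $\mu$ is ${<}\,\kappa$-additive. Thus no obstacle arises beyond that lemma, and the argument carries over from the Ulam real-valued case without further complication.
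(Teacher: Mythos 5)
Your proposal is correct and coincides with the paper's own proof: the authors likewise state that the argument is the proof of Theorem~\ref{ulamrvmeas} repeated verbatim, with Proposition~\ref{measvsstate}(iii) replacing part (i) and Lemma~\ref{summing} replacing the monotone convergence theorem in the final computation. You have correctly identified the transfinite interchange of sum and integral as the only new ingredient, and your application of Lemma~\ref{summing} (with index cardinality $\lambda < \kappa$ matching the ${<}\,\kappa$-additivity of $\mu$) is exactly how the paper uses it.
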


\section{Countably additive pure states}

The correspondence between $\{0,1\}$-valued measures on $\kappa$ and pure
states on $B(l^2(\kappa))$ described in Theorem \ref{maintheorem} (ii) and
(iv) would be easy to establish if {\em Anderson's conjecture} were true.
This conjecture asserts that any pure state on $B(H)$ arises by
composing a pure state on some atomic masa with the conditional expectation
onto that masa \cite{And}. If this were true then the technique of Theorems
\ref{ulamrvmeas} and \ref{rvmeas} could be straightforwardly used to prove
Theorem \ref{maintheorem} (ii), (iv). Unfortunately,  the conjecture is known
to be false for any infinite dimensional $H$ if the continuum hypothesis is
true \cite{AkWeav}, and also under various other weaker conditions \cite{FW}.
However, it is expected to be consistent with ZFC, and presumably also with
the existence of measurable cardinals.

In order to prove Theorem \ref{maintheorem} (ii) and (iv) without invoking
this questionable assumption, we need to develop the theory of
countably additive pure states. The first property we need is regularity.
A state $\phi$ on a von Neumann algebra $M$ is {\em regular} if
$$\phi(q_n) = 0\mbox{ for all }n\qquad\Rightarrow\qquad
\phi\left(\bigvee q_n\right) = 0$$
for any countable set of projections $\{q_n\}$ in $M$. A good source on
this topic is \cite{Ham}. The basic facts we need are these:

\begin{theorem}\label{pureregular}
\cite[Proposition 10.1.5, Theorem 10.3.7]{Ham} 
Every countably additive state on a von Neumann algebra is regular. Every
regular pure state on a von Neumann algebra is countably additive.
\end{theorem}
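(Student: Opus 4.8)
The statement has two halves, which I would prove separately.

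\emph{First half (countably additive $\Rightarrow$ regular).} The plan is to reduce the defining condition to two ingredients: sequential order-continuity along increasing sequences, and nullity of finite joins. Sequential order-continuity is immediate from countable additivity: given projections $r_m \uparrow r$, set $s_1 = r_1$ and $s_m = r_m - r_{m-1}$, so the $s_m$ are mutually orthogonal with $r_m = \sum_{i \le m} s_i$ and $r = \sum_i s_i$; then $\phi(r) = \sum_i \phi(s_i) = \lim_m \phi(r_m)$. The more delicate ingredient is that $\phi(e) = \phi(f) = 0$ forces $\phi(e \vee f) = 0$. Here I would invoke the structure of a pair of projections: $\{e,f\}''$ is $\sigma$-finite (a finite-dimensional abelian part, on which $e,f$ commute, together with a copy of $M_2$ over a singly generated, hence $\sigma$-finite, abelian algebra). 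By Proposition \ref{addvsnormal} the restriction of a countably additive $\phi$ to this ${<}\,\aleph_1$-decomposable subalgebra is normal, and a normal state has a support projection $s$; from $\phi(e) = \phi(f) = 0$ we get $e, f \le 1 - s$, hence $e \vee f \le 1 - s$ and $\phi(e \vee f) = 0$. Induction extends this to $r_m = q_1 \vee \cdots \vee q_m$, and combining with the first ingredient: if $\phi(q_n) = 0$ for all $n$ then each $\phi(r_m) = 0$ and $\phi(\bigvee q_n) = \lim_m \phi(r_m) = 0$. I expect no serious obstacle here; the only nonroutine point is the classical $\sigma$-finiteness of $\{e,f\}''$.

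\emph{Second half (regular pure $\Rightarrow$ countably additive).} I would first reduce to a statement about a decreasing sequence. For mutually orthogonal $\{p_n\}$ one always has $\sum_{n \le N} \phi(p_n) = \phi(\sum_{n\le N} p_n) \le \phi(\bigvee p_n)$, and in fact $\phi(\bigvee p_n) - \sum_{n\le N}\phi(p_n) = \phi(q_N)$ where $q_N = \bigvee_{n>N} p_n \downarrow 0$; so countable additivity is equivalent to $\phi(q_N) \to 0$. Purity enters through the dichotomy for pure states on $B(H)$: either $\phi$ is a vector state, hence normal and trivially countably additive, or $\phi$ is singular, annihilating the compacts. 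In the singular case, the limit $\delta = \lim_N \phi(q_N)$ cannot equal $1$: if it did, then $q_N^\perp \uparrow 1$ with $\phi(q_N^\perp) = 0$ for all $N$, and regularity applied to the countable family $\{q_N^\perp\}$ (whose join is $(\bigwedge_N q_N)^\perp = 1$) would force $\phi(1) = 0$. So it remains to exclude $0 < \delta < 1$.

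This last step is where I expect the real difficulty. The naive idea of splitting $\phi$ along $q_N$ and $q_N^\perp$ into a convex combination, contradicting extremality, fails precisely because a pure state may take fractional values on projections, so the cross terms $\phi(q_N x q_N^\perp)$ need not vanish and the decomposition is not convex. The argument must therefore use purity more intrinsically. One foothold is that a singular \emph{regular} state annihilates not just finite-rank but all countable-rank projections: if $e = \bigvee_k e_k$ with each $e_k$ finite rank, then $\phi(e_k) = 0$ and regularity gives $\phi(e) = 0$; applied inside each $p_n$, this shows any positive mass $\phi(p_n)$ must be ``spread'' over an uncountably-generated piece. To kill the residual mass $\delta$ I would pass to the irreducible GNS representation of $\phi$, which, being singular, factors through the Calkin algebra, and attempt to show using Kadison transitivity together with regularity that the decreasing net $\pi(q_N)$ annihilates the cyclic vector. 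Making this transfinite/excision argument precise is the crux of the theorem and the step I would expect to absorb most of the effort.
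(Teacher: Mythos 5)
First, a point of comparison: the paper does not actually prove this statement --- it is quoted directly from Hamhalter's book (\cite[Proposition 10.1.5, Theorem 10.3.7]{Ham}) --- so your attempt has to be judged on its own terms, and on those terms it has two genuine gaps. In the first half, the claim that $\{e,f\}''$ is $\sigma$-finite ``because a singly generated abelian von Neumann algebra is $\sigma$-finite'' is false, and this is exactly the step the whole argument leans on. Concretely, let $a$ be the diagonal operator on $l^2(2^{\aleph_0})$ whose diagonal entries are $2^{\aleph_0}$ distinct reals in $[0,1]$; then $\{a\}'' = l^\infty(2^{\aleph_0})$ (by the double commutant theorem, or because Borel functional calculus puts every spectral projection $\chi_{\{c\}}(a)$ into $\{a\}''$), an abelian singly generated algebra with uncountably many mutually orthogonal minimal projections. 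Correspondingly, two projections $e,f$ positioned at uncountably many distinct angles generate a non-$\sigma$-finite algebra, since $\{e,f\}'' \supseteq \{efe\}''$ and $efe$ can have uncountably many distinct eigenvalues. (You may be thinking of the separable Hilbert space case, where the claim is true.) So the appeal to Proposition \ref{addvsnormal} collapses. The half is nevertheless salvageable by a more elementary route that needs no structure theory and no reduction to finite joins: given projections $q_n \in \ker\phi$, set $a = \sum_n 2^{-n}q_n$ (norm convergent), so $\phi(a) = 0$ by norm continuity; the spectral projections $r_k = \chi_{[1/k,1]}(a)$ satisfy $r_k \le k\,a$, hence $\phi(r_k) = 0$, and $r_k$ increases to the support projection of $a$, which is exactly $\bigvee_n q_n$; your own first ingredient (sequential order-continuity along increasing sequences of projections, which is a correct consequence of countable additivity) then yields $\phi\bigl(\bigvee_n q_n\bigr) = \lim_k \phi(r_k) = 0$.

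In the second half you do not have a proof at all: you correctly reduce countable additivity to showing $\phi(q_N) \to 0$ for $q_N = \sum_{n>N} p_n$, and you correctly exclude $\delta = \lim_N \phi(q_N) = 1$ by applying regularity to $q_N^\perp \uparrow 1$, but the exclusion of $0 < \delta < 1$ --- which you yourself identify as ``the crux'' --- is precisely the content of Hamhalter's Theorem 10.3.7, and your sketch (GNS, Kadison transitivity, excision) stops exactly where that theorem begins. In addition, the framing is illegitimately specialized: the statement concerns pure states on an \emph{arbitrary} von Neumann algebra $M$, whereas your dichotomy ``vector state or annihilates the compacts,'' the use of countable-rank projections, and the proposed factorization through the Calkin algebra make sense only for $M = B(H)$. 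What survives in general is the weaker dichotomy that a pure state is either normal or singular (apply purity to the normal/singular decomposition $\phi = \phi_n + \phi_s$: if both parts were nonzero, $\phi$ would be a proper convex combination of distinct states), but none of the $B(H)$-specific machinery you invoke after that transfers, so even as a sketch the argument does not address the theorem as stated.
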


We will state the results in this section for an arbitrary von Neumann
algebra $M$, but all that is really needed for Theorem \ref{maintheorem} is
the case where $M = B(H)$.

According to \cite{KK}, it is consistent that there can be regular (non-pure)
states which are not countably additive. We will come back to this point in
Section 8 when we discuss regularity in greater detail.
Theorem \ref{pureregular} is all we need for now.

Next, we require Farah and Weaver's theory of quantum filters \cite{FW}
(see also \cite{Bice}). A {\em quantum filter} on a von Neumann algebra
$M$ is a family of projections $\mathcal{F}$ in $M$ with the properties
\begin{itemize}
\item[(i)] if $p \in \mathcal{F}$ and $p \leq q$ then $q \in \mathcal{F}$

\item[(ii)] if $p_1, \ldots, p_n \in \mathcal{F}$ then $\|p_1\cdots p_n\| = 1$.
\end{itemize}
If $\phi$ is a state on $M$ then
$$\mathcal{F}_\phi = \{p \in M: p\mbox{ is a projection and }\phi(p) = 1\}$$
is a quantum filter; this is easy to see by working in the GNS representation
for $\phi$ (if $\phi(p_i) = 1$ then $\pi_\phi(p_i)v = v$ where $\pi_\phi$
is the GNS representation and $v$ is the implementing unit vector). Indeed,
every maximal quantum filter arises in this way from a pure state, and this
sets up a 1-1 correspondence between pure states and maximal quantum filters
\cite{FW}. However, we will not need this full result. All we need is the fact
that $\mathcal{F}_\phi = \mathcal{F}_\psi$ implies $\phi = \psi$ when $\phi$ is
pure. This is stated in a footnote in \cite{FW} but a proof is not given, so
we include a proof here.

\begin{lemma} \label{vnpu}
Suppose that $\phi$ is a pure state on a von Neumann algebra $M$ and that
$\psi$ is a state on $M$ such that
$$\phi(p) = 1 \qquad\Rightarrow\qquad \psi(p) = 1$$
for any projection $p \in M$. Then $\phi = \psi$.
\end{lemma}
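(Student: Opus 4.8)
The plan is to show that $\phi$ and $\psi$ agree on every self-adjoint element, which suffices since a state is determined by its values on self-adjoints (if $x = a + ib$ with $a,b$ self-adjoint, then $\phi(x) = \phi(a) + i\phi(b)$). Write $\mathcal{F}_\phi = \{p : p \text{ a projection}, \phi(p) = 1\}$, so that the hypothesis says exactly $\mathcal{F}_\phi \subseteq \mathcal{F}_\psi$. The one elementary ingredient I would record at the outset is a compression identity: if $p$ is a projection with $\psi(p) = 1$, then $\psi(pyp) = \psi(y)$ for every $y \in M$. This follows by passing to the GNS representation $(\rho, K, w)$ of $\psi$, where $\psi(p) = 1$ forces $\rho(p)w = w$, so that $\langle \rho(pyp)w, w\rangle = \langle \rho(y)\rho(p)w, \rho(p)w\rangle = \langle \rho(y)w, w\rangle = \psi(y)$.

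First I would fix a self-adjoint $x \in M$ and an $\epsilon > 0$ and aim to produce a single projection $p \in \mathcal{F}_\phi$ with $\|pxp - \phi(x)p\| < \epsilon$, the norm taken in $M$. This is where purity enters, via excision of the pure state $\phi$ (Akemann--Anderson--Pedersen): there is a positive contraction $a \in M$ with $\phi(a) = 1$ and $\|axa - \phi(x)a^2\| < \epsilon/3$. To convert the excisor $a$ into a projection lying in the filter, I would pass to the spectral projection $p = \chi_{(1-\delta, 1]}(a) \in M$ for a small $\delta > 0$. The operator inequality $a \le (1-\delta)1 + \delta p$ forces $1 = \phi(a) \le (1-\delta) + \delta\phi(p)$, hence $\phi(p) = 1$ and $p \in \mathcal{F}_\phi$; meanwhile $(1-\delta)p \le ap \le p$ gives $\|ap - p\| \le \delta$. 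Feeding this into the excision inequality yields $\|pxp - \phi(x)p\| < \epsilon$ once $\delta$ is chosen small relative to $\epsilon$ and $\|x\|$.

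With such a $p$ in hand the conclusion is immediate. Since $p \in \mathcal{F}_\phi \subseteq \mathcal{F}_\psi$ we have $\psi(p) = 1$, so the compression identity gives $\psi(x) = \psi(pxp)$. Writing $pxp = \phi(x)p + e$ with $\|e\| < \epsilon$, we obtain $\psi(x) = \phi(x)\psi(p) + \psi(e) = \phi(x) + \psi(e)$, whence $|\psi(x) - \phi(x)| \le \|e\| < \epsilon$. Letting $\epsilon \to 0$ gives $\psi(x) = \phi(x)$ for all self-adjoint $x$, and therefore $\phi = \psi$.

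The hard part is the excision step, and it is the only place purity is used---as it must be, since for a genuinely mixed state $\phi = \tfrac12(\phi_1 + \phi_2)$ one checks that $\phi(p) = 1$ forces $\phi_1(p) = \phi_2(p) = 1$, so that $\psi = \phi_1$ would satisfy the hypothesis without equalling $\phi$. If one prefers a self-contained argument to citing the excision theorem, I would note that Kadison transitivity readily supplies a projection $p \in \mathcal{F}_\phi$ with $\pi_\phi(pxp - \phi(x)p) = 0$; but this controls only the norm in $\pi_\phi(M)$, not in $M$, and bridging that gap is precisely the content of excision, which cannot be skipped here since $\psi$ need not annihilate $\ker \pi_\phi$.
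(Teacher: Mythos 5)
Your proof is correct, and it takes a genuinely different route from the paper's. The paper works with the multiplicative domain $D$ of $\phi$ and quotes three facts from Anderson \cite{And1}: a projection lies in $D$ exactly when $\phi(p) \in \{0,1\}$, the span of the projections in $D$ is norm dense in $D$, and a pure state is the unique state extension of its restriction to $D$; the hypothesis then forces $\phi = \psi$ on the projections in $D$, hence on $D$, hence everywhere. You instead prove approximate excision by projections: Akemann--Anderson--Pedersen excision of the pure state $\phi$, upgraded to a projection $p \in \mathcal{F}_\phi$ via the spectral projection $\chi_{(1-\delta,1]}(a)$, followed by the elementary GNS compression identity $\psi(p)=1 \Rightarrow \psi(pxp)=\psi(x)$, giving $|\psi(x)-\phi(x)|<\epsilon$ for each self-adjoint $x$. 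Your estimates check out ($a \le (1-\delta)1 + \delta p$ does force $\phi(p)=1$, and $\|ap-p\|\le\delta$ turns the excision inequality into $\|pxp-\phi(x)p\| \le \epsilon/3 + C\delta\|x\|$). One point you should flag explicitly when citing \cite{AAP}: the definition of excision there only requires the excisors to have norm one, whereas you need them in the face $\{a : 0 \le a \le 1,\ \phi(a)=1\}$; this stronger form does hold for pure states (the standard construction takes $a = 1 - e$ with $e$ ranging over an approximate unit of the hereditary subalgebra of $\ker\phi$), but it is not literally the displayed definition. As for what each approach buys: the paper's argument is shorter given Anderson's machinery and pinpoints the locus of agreement (the multiplicative domain), while yours avoids the unique-extension theorem entirely, is quantitative, and uses only the von Neumann algebra structure through spectral calculus. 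It is also pleasantly consonant with the paper's later development: your spectral upgrade of an excisor to a projection is the same device the paper uses in Lemma \ref{excision}, except that you need only approximate excision and hence no countable additivity hypothesis, and your compression identity is the fact the paper records just before Lemma \ref{isolate}.
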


\begin{proof} The {\em multiplicative domain} of $\phi$ is the set
$D = \{x \in M: \phi(xy) = \phi(yx) = \phi(x)\phi(y)$ for all $y \in M\}$.
A projection $p$ belongs to $D$ if and only if $\phi(p) = 0$ or $1$
\cite[Section 1]{And1}. Now we are given that $\phi(p) = 1$ implies
$\psi(p) = 1$, and thus also
$$\phi(p) = 0 \quad\Rightarrow\quad \phi(1 - p) = 1
\quad\Rightarrow\quad \psi(1 - p) = 1
\quad\Rightarrow\quad \psi(p) = 0.$$
So $\phi(p) = \psi(p)$ for every projection $p \in D$. But the
span of the projections in $D$ is norm dense in $D$
\cite[Proposition on p.\ 305]{And1}, so by linearity and continuity
$\phi$ and $\psi$ agree on $D$. Finally, since $\phi$ is pure it is
the unique state extension to $M$ of its restriction to $D$
\cite[Corollary on p.\ 307]{And1}. Since $\phi$ and $\psi$ agree on $D$,
this entails that $\phi = \psi$.
\end{proof}

If $\phi$ is regular then quantum filters become especially nice. Note
that in general, quantum filters need not be actual filters in the usual
sense of being stable under finite meets.

\begin{lemma}\label{qfilter}
Let $\phi$ be a regular state on a von Neumann algebra $M$. Then
$\mathcal{F}_\phi$ is a $\sigma$-filter, i.e., it is stable under countable
meets.
\end{lemma}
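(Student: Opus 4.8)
The plan is to reduce stability under countable meets to the regularity hypothesis by passing to orthocomplements and invoking De Morgan's law in the projection lattice. The central observation is that in any von Neumann algebra the map $p \mapsto 1 - p$ is an order-reversing involution on the complete lattice of projections, so that for any countable family $\{p_n\}$ one has $1 - \bigwedge_n p_n = \bigvee_n (1 - p_n)$.

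First I would take a countable family $\{p_n\}$ of projections lying in $\mathcal{F}_\phi$, so that $\phi(p_n) = 1$ for every $n$, and set $q_n = 1 - p_n$. Since $\phi$ is a state, $\phi(q_n) = 1 - \phi(p_n) = 0$ for all $n$. Regularity then applies verbatim to the family $\{q_n\}$ and yields $\phi(\bigvee_n q_n) = 0$.

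Next I would translate this back using De Morgan's law: $\bigvee_n q_n = 1 - \bigwedge_n p_n$, so that linearity of $\phi$ together with $\phi(1) = 1$ gives $\phi(\bigwedge_n p_n) = 1 - \phi(\bigvee_n q_n) = 1$. Hence $\bigwedge_n p_n \in \mathcal{F}_\phi$, which is exactly stability under countable meets. Note that the finite-meet case --- which a general quantum filter need not satisfy --- is recovered for free by padding a finite family with zero projections, so regularity is precisely the property that upgrades a quantum filter to an honest $\sigma$-filter.

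I do not expect a serious obstacle here; the only point requiring care is the justification of De Morgan's law for arbitrary (in particular infinite) families of projections, where the joins and meets are understood as the lattice operations in the complete projection lattice of $M$ rather than as algebraic combinations. Once the orthocomplementation $p \mapsto 1-p$ is recognized as a lattice anti-automorphism this identity is immediate, and the remainder of the argument is a one-line computation with no further input beyond the definition of regularity.
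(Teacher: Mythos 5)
Your proposal is correct and is essentially identical to the paper's own proof: both pass to the complements $1 - p_n$, apply regularity to conclude $\phi\bigl(\bigvee (1 - p_n)\bigr) = 0$, and use De Morgan's law in the projection lattice to get $\phi\bigl(\bigwedge p_n\bigr) = 1$. No further comment is needed.
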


\begin{proof}
Let $\{p_n\}$ be a countable family in $\mathcal{F}_\phi$ and write
$p_n^\perp = 1 - p_n$. Then
$$\phi\left(\bigwedge p_n\right) = \phi\left(1 - \bigvee p_n^\perp\right)
= 1 - \phi\left(\bigvee p_n^\perp\right) = 1$$
where $\phi(\bigvee p_n^\perp) = 0$ by regularity. Thus
$\bigwedge p_n \in \mathcal{F}_\phi$.
\end{proof}

Recall that for pure states, regularity is equivalent to countable
additivity (Theorem \ref{pureregular}). Our focus will now turn exclusively
to pure states for the remainder of this section.

Given a pure state $\phi$ on $M$, say that a projection
$p \in \mathcal{F}_\phi$ {\em isolates} a positive element $x \in M$ if
$\phi(x) = \|pxp\|$. Note that we automatically have
$\phi(x) = \phi(pxp) \leq \|pxp\|$ for any $x \geq 0$ and
$p \in \mathcal{F}_\phi$; the first equality
follows from the fact that $p$ belongs to the multiplicative domain of
$\phi$ (cf.\ the proof of Lemma \ref{vnpu}), or it can be directly checked
by passing to the GNS representation for $\phi$. It follows that if
$p \in \mathcal{F}_\phi$ isolates $x$, then so does any $p' \leq p$ in
$\mathcal{F}_\phi$, since $\|p'xp'\| \leq \|pxp\| = \phi(x)$, and as we
just saw, the reverse inequality is automatic.

\begin{lemma}\label{isolate}
Suppose that $\phi$ is a countably additive pure state on a von Neumann
algebra $M$ and let $x \in M$ be positive.  Then there is a projection
$p \in \mathcal{F}_\phi$ which isolates $x$.
\end{lemma}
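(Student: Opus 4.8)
Write $c = \phi(x)$. The plan is to prove first the \emph{approximate} version---for every $\epsilon > 0$ there is a projection $p \in \mathcal{F}_\phi$ with $\|pxp\| \le c + \epsilon$---and then to sharpen it to an exact equality using countable additivity. For the sharpening I would use the two facts already recorded above: if $p \le p'$ in $\mathcal{F}_\phi$ then $\|pxp\| \le \|p'xp'\|$, and for any $p \in \mathcal{F}_\phi$ one has $\|pxp\| \ge \phi(pxp) = \phi(x) = c$. Granting the approximate version, choose for each $n$ a projection $p_n \in \mathcal{F}_\phi$ with $\|p_n x p_n\| \le c + 1/n$ and set $q_n = p_1 \wedge \cdots \wedge p_n$. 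Since $\phi$ is countably additive it is regular (Theorem \ref{pureregular}), so $\mathcal{F}_\phi$ is a $\sigma$-filter (Lemma \ref{qfilter}); hence each $q_n$ lies in $\mathcal{F}_\phi$, and so does $p = \bigwedge_n q_n$. As $p \le q_n \le p_n$ we get $\|pxp\| \le \|p_n x p_n\| \le c + 1/n$ for every $n$, whence $\|pxp\| \le c$; combined with the automatic reverse inequality this yields $\|pxp\| = c = \phi(x)$, so $p$ isolates $x$. This is the step that genuinely uses countable additivity, and it is clean.

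It remains to prove the approximate version, and here purity is essential. (For a non-pure state it can fail: on $M_2$ with $\phi$ the normalized trace and $x = \mathrm{diag}(2,0)$ one has $\phi(x) = 1$, yet the only projection with $\phi(p) = 1$ is $p = 1$, forcing $\|pxp\| = 2$.) The mechanism I would exploit is \emph{excision} of the pure state: since $\phi$ is pure, for the given $x$ and $\epsilon$ one can produce a positive contraction $e \in M$ with $\phi(e)$ close to $1$ and $\|exe - c\,e^2\|$ small. The crux of the whole argument is to arrange that $\phi(e) = 1$ \emph{exactly}, and this is where I expect the main obstacle to lie: approximate excising elements do not suffice, because projections of filter-value merely close to $1$ cannot be assembled into a single filter projection---meets destroy the value while joins destroy the norm control. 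Obtaining an excising element of value exactly one is precisely the content supplied by purity, via the analysis of the multiplicative domain and the unique-extension property of pure states in \cite{And1} (cf.\ the excision theorem of Akemann--Anderson--Pedersen).

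Finally I would convert such an $e$ into an honest filter projection by a spectral trick that is forced by countable additivity. Since $0 \le e \le 1$ and $\phi(e) = 1$, the restriction of the countably additive state $\phi$ to the abelian von Neumann algebra generated by $e$ corresponds to a probability measure on $\sigma(e) \subseteq [0,1]$ with mean $\int t\,d\nu = \phi(e) = 1$; as $\int (1-t)\,d\nu = 0$ with a nonnegative integrand, $\nu$ must be the point mass at $1$. Consequently the spectral projection $p = \chi_{\{1\}}(e)$---the eigenprojection of $e$ for the eigenvalue $1$---satisfies $\phi(p) = 1$, so $p \in \mathcal{F}_\phi$. Because $ep = pe = p$ we have $e^2 p = p$, hence $pxp = p(exe)p$ and $c\,p = p(c\,e^2)p$, giving $\|pxp - c\,p\| \le \|exe - c\,e^2\| < \epsilon$ and therefore $\|pxp\| \le c + \epsilon$, as needed. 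So once the exact excision of the previous paragraph is in hand, this last step is routine, and the three pieces---exact excision from purity, the $\delta_1$-concentration from countable additivity, and the $\sigma$-filter meet---combine to give the isolating projection.
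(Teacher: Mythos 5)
Your proposal's outer structure is right, and two of its three stages are solid. The sharpening stage --- pick $p_n \in \mathcal{F}_\phi$ with $\|p_n x p_n\| \le \phi(x) + 1/n$ and pass to the countable meet using Theorem \ref{pureregular} and Lemma \ref{qfilter} --- is exactly the paper's final step. Your spectral conversion is also essentially sound: countable additivity of $\phi$ makes $B \mapsto \phi(\chi_B(e))$ a countably additive Borel measure on $\sigma(e)$ (disjoint Borel sets have mutually orthogonal spectral projections), so mean $1$ forces $\phi(\chi_{\{1\}}(e)) = 1$; and this really does need countable additivity, since a weak* cluster point of vector states along eigenvectors of $e$ with eigenvalues increasing to $1$ satisfies $\phi(e) = 1$ yet vanishes on $\chi_{\{1\}}(e)$. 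The genuine gap sits exactly where you say you expect the main obstacle to lie, and your citations do not close it: you never produce a positive contraction $e$ with $\phi(e) = 1$ \emph{exactly} and $\|exe - \phi(x)e^2\| < \epsilon$. The multiplicative-domain and unique-extension results of \cite{And1} do not manufacture excising elements --- they concern where $\phi$ is multiplicative and when its restriction determines it, and the paper uses them only to prove Lemma \ref{vnpu} --- so as written, the declared crux of your argument is assumed rather than proved.

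The claim you need is true, but its proof is the Akemann--Anderson--Pedersen construction \cite{AAP}, not \cite{And1}: for a pure state, $\ker\phi = L_\phi + L_\phi^*$ where $L_\phi$ is the left kernel; take an approximate unit $(u_\lambda)$ of the hereditary subalgebra $L_\phi \cap L_\phi^*$ and set $e_\lambda = 1 - u_\lambda$. Then $\|e_\lambda (x - \phi(x)1) e_\lambda\| \to 0$ (if $b \in L_\phi$ then $b^*b \in L_\phi \cap L_\phi^*$, so $\|b e_\lambda\|^2 = \|e_\lambda b^* b e_\lambda\| \to 0$), while $\phi(e_\lambda) = 1$ holds on the nose because $u_\lambda \in \ker\phi$. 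Note that this exact normalization is a feature of the construction, not part of the usual statement of excision, so it must be argued rather than cited. With that supplied, your route is correct and genuinely different from the paper's: the paper gets the approximate statement with no excision machinery at all, by supposing $\|pxp\| > a$ for every $p \in \mathcal{F}_\phi$, choosing for each finite $F \subset \mathcal{F}_\phi$ a unit vector $v_F$ in the range of $\bigwedge F$ (which lies in $\mathcal{F}_\phi$ by Lemma \ref{qfilter}) with $\langle x v_F, v_F \rangle > a$, and noting that any weak* cluster point of the vector states $\psi_F$ equals $1$ on all of $\mathcal{F}_\phi$ and is $\geq a$ at $x$, hence equals $\phi$ by Lemma \ref{vnpu}, giving $\phi(x) \geq a$. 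In the paper purity enters through Lemma \ref{vnpu}; in your route it enters through excision (in the spirit of the paper's own Lemma \ref{excision}), at the cost of the extra C*-algebraic input your write-up leaves unproved.
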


\begin{proof}
We must find $p \in \mathcal{F}_\phi$ such that $\|pxp\| \leq \phi(x)$.
Suppose $||pxp|| > a$ for all $p \in \mathcal{F}_\phi$. Then for any
finite subset $F = \{p_1, \ldots, p_n\} \subset \mathcal{F}_\phi$
let $p_F = p_1 \wedge \cdots \wedge p_n$ be their meet. According to
Lemma \ref{qfilter}, $p_F \in \mathcal{F}_\phi$.

Assume $M$ is concretely represented on a Hilbert space $H$. For each $F$,
find a unit vector $v_F \in {\rm Ran}\, p_F$ such that
$\langle xv_F, v_F\rangle = \langle (p_Fxp_F)v_F, v_F \rangle
> a$, and let $\psi_F$ be the corresponding vector state
$\psi_F: y \mapsto \langle y v_F, v_F \rangle$ on $M$.  Any weak* limit of
this net of vector states (with the finite sets $F$ ordered by inclusion)
will then be a state which takes a value $\geq a$ on $x$ and takes the value
$1$ on every projection in $\mathcal{F}_\phi$. By Lemma \ref{vnpu} this
state must equal $\phi$, so we have shown that $\phi(x) \geq  a$.

We now know that for every $n$ there exists $p_n \in \mathcal{F}_\phi$
such that $\phi(x) \geq ||p_n x p_n|| - \frac{1}{n}$. Letting
$p = \bigwedge p_n$ then yields $\phi(x) \geq ||pxp||$, and $\phi(p) = 1$
by Lemma \ref{qfilter}.
\end{proof} 

The previous lemma is actually all we need for Theorem \ref{maintheorem}, but
a stronger conclusion can be drawn.

For any $x \in M$, say that $p \in \mathcal{F}_\phi$ {\em excises} $\phi$
for $x$ if $pxp = \phi(x)p$. This is different from, but related to, the
notion of excision in \cite{AAP}. Note again that if $p \in \mathcal{F}_\phi$
excises $\phi$ for $x$ then so does any $p' \leq p$ in $\mathcal{F}_\phi$,
since $$p'xp' = p'(pxp)p' = \phi(x)p'pp' = \phi(x)p'.$$

\begin{lemma}\label{excision}
Suppose that $\phi$ is a countably additive pure state on a von Neumann
algebra $M$ and let $x \in M$.  Then there is a projection
$p \in \mathcal{F}_\phi$ which excises $\phi$ for $x$.
\end{lemma}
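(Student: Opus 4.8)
The plan is to reduce to the case of positive $x$ and, there, to upgrade the norm equality supplied by Lemma \ref{isolate} to the operator equation defining excision by applying that lemma a second time to a cleverly chosen auxiliary element.

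For the reduction I would first observe that excision passes to linear combinations once the projections are made compatible. If $p_1$ excises $\phi$ for $x_1$ and $p_2$ excises $\phi$ for $x_2$, then $q = p_1 \wedge p_2$ lies in $\mathcal{F}_\phi$ by the $\sigma$-filter property (Lemma \ref{qfilter}), and by the remark preceding the statement $q$ excises both $x_1$ and $x_2$; since $\phi$ is linear, $q$ then excises any linear combination. Moreover excision is unaffected by adding a scalar multiple of the identity, because $p(x + c1)p = pxp + cp$ while $\phi(x + c1) = \phi(x) + c$. Hence it suffices to treat positive $x$: a self-adjoint $x$ is handled by excising the positive element $x + \|x\|1$, and a general $x = \mathrm{Re}\,x + i\,\mathrm{Im}\,x$ by excising its real and imaginary parts separately and intersecting the two projections.

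For positive $x$ I would use Lemma \ref{isolate} to produce $p \in \mathcal{F}_\phi$ with $\|pxp\| = \phi(x)$. Then $pxp$ is a positive element of the corner $pMp$ of norm $\phi(x)$, so $pxp \leq \phi(x)p$ and the \emph{defect} $w = \phi(x)p - pxp$ is positive. The crucial point is that $\phi(w) = 0$: indeed $\phi(p) = 1$ and $\phi(pxp) = \phi(x)$, the latter because $p$ lies in the multiplicative domain of $\phi$, as noted before Lemma \ref{isolate}. Applying Lemma \ref{isolate} again, now to $w$, yields $p' \in \mathcal{F}_\phi$ with $\|p'wp'\| = \phi(w) = 0$, i.e.\ $p'wp' = 0$. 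Setting $q = p \wedge p' \in \mathcal{F}_\phi$ and using $q \leq p$ and $q \leq p'$, a short computation gives $qwq = \phi(x)q - qxq$; on the other hand any subprojection of $p'$ in $\mathcal{F}_\phi$ still isolates $w$, so $qwq = 0$, whence $qxq = \phi(x)q$.

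The main obstacle is precisely this upgrade from $\|pxp\| = \phi(x)$ to $pxp = \phi(x)p$ in the positive case, since isolation alone controls only the norm of the corner. The device that overcomes it is the introduction of the defect $w$, whose very definition forces $\phi(w) = 0$, so that a second application of isolation annihilates it outright rather than merely bounding its norm. Everything else — closure of $\mathcal{F}_\phi$ under finite meets, inheritance of isolation and excision by smaller projections, and linearity of $\phi$ — is routine bookkeeping already available from the preceding lemmas and remarks.
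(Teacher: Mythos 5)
Your proof is correct, and it takes a genuinely different route from the paper's at the crucial step. The paper also reduces to positive $x$ and also starts from Lemma \ref{isolate}, but it upgrades isolation to excision via spectral theory: since $\phi(qxq) = \|qxq\|$, it invokes \cite[Proposition on p.\ 305]{And1} to conclude that the spectral projections $p_n = P_{[\|qxq\| - 1/n,\,\|qxq\|]}(qxq)$ all lie in $\mathcal{F}_\phi$, and then takes their meet $p = \bigwedge p_n$, which is the spectral projection of $qxq$ for the singleton $\{\|qxq\|\}$ and hence satisfies $p(qxq)p = \|qxq\|p = \phi(x)p$. Your defect trick avoids this entirely: setting $w = \phi(x)p - pxp \geq 0$ (legitimate, since a positive element of the corner $pMp$ is dominated by its norm times the unit $p$ of that corner), noting $\phi(w) = \phi(x)\phi(p) - \phi(pxp) = 0$ by the multiplicative-domain identity recorded before Lemma \ref{isolate}, and then applying Lemma \ref{isolate} a second time to $w$ turns isolation of a $\phi$-null positive element into outright annihilation, $p'wp' = 0$, after which the meet $q = p \wedge p'$ satisfies $qwq = 0$ and hence $qxq = \phi(x)q$. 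What the paper's route buys is a single application of Lemma \ref{isolate} and a concrete description of the excising projection as a spectral projection of a compression of $x$; what yours buys is self-containedness — no spectral calculus and no external citation to Anderson beyond what the earlier lemmas already use — at the cost of invoking Lemma \ref{isolate} twice. Your reduction to the positive case (real and imaginary parts plus a scalar shift, intersecting finitely many excising projections, all in $\mathcal{F}_\phi$ by Lemma \ref{qfilter} and the inheritance remark) is a cosmetic variant of the paper's decomposition into four positive elements and is equally valid.
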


\begin{proof}
Assume first that $x$ is positive, and by Lemma \ref{isolate} find
$q \in \mathcal{F}_\phi$ which isolates $x$. If $\phi(x) = 0$ then $q$
excises $\phi$ for $x$, so assume $\phi(x) > 0$. Then $\phi(qxq) = \phi(x) =
\|qxq\|$, so by \cite[Proposition on p.\ 305]{And1}, for any $n \in \mathbb{N}$
the spectral projection $p_n = P_{[\|qxq\| - 1/n,\|qxq\|]}(qxq)$ of $qxq$ for
the interval $[\|qxq\| - \frac{1}{n},\|qxq\|]$ belongs to $\mathcal{F}_\phi$.
Then $p = \bigwedge p_n \in \mathcal{F}_\phi$ by Lemma \ref{qfilter},
and since $p \leq q$ we have
$$pxp = p(qxq)p = \|qxq\|p = \phi(x)p,$$
where the middle equality holds since $p$ is the spectral projection of $qxq$
for the singleton set $\{\|qxq\|\}$. So $p$ excises $\phi$ for $x$.

Now every $x \in M$ is a linear combination of four positive elements $x_i$,
and taking the meet of four projections which excise $\phi$ for the $x_i$
will produce a projection that excises $\phi$ for $x$.
\end{proof}

We can infer a satisfying result about the continuity of countably additive
pure states.

\begin{theorem}\label{weakstar}
Let $\kappa$ be an uncountable cardinal, let $\phi$ be a ${<}\,\kappa$-additive
pure state on a von Neumann algebra $M$, and let $N$ be a von Neumann
subalgebra of $M$ that is generated by fewer than $\kappa$ elements. Then the
restriction of $\phi$ to $N$ is normal. In particular, any countably
additive pure state on a von Neumann algebra is sequentially weak* continuous.
\end{theorem}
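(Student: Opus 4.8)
The plan is to reduce to the criterion that a state is normal if and only if it is completely additive on projections \cite[Corollary 3.11]{Tak}, and to produce a single projection $p\in\mathcal{F}_\phi$ which \emph{excises} all of $N$, in the sense that $pyp=\phi(y)p$ for every $y\in N$. Since $\kappa$ is uncountable, $\phi$ is in particular countably additive, hence regular by Theorem \ref{pureregular}, so $\mathcal{F}_\phi$ is a $\sigma$-filter by Lemma \ref{qfilter} and the excision machinery of Lemma \ref{excision} is available. The first task is to upgrade the closure of $\mathcal{F}_\phi$ under countable meets to closure under meets of fewer than $\kappa$ projections. Writing $p_\alpha^\perp=1-p_\alpha$, this amounts to a transfinite regularity statement: if $\{q_\alpha:\alpha\in\lambda\}$ are projections with $\phi(q_\alpha)=0$ and $\lambda<\kappa$, then $\phi(\bigvee q_\alpha)=0$. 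I would prove this by induction on $\lambda$, the countable case being ordinary regularity. For the inductive step, set $s_\beta=\bigvee_{\gamma<\beta}q_\gamma$; the increments $s_{\beta+1}-s_\beta$ are mutually orthogonal with $\sum_\beta(s_{\beta+1}-s_\beta)=\bigvee q_\alpha$, each increment is dominated by $s_{\beta+1}$, which is a join of $|\beta|+1<\lambda$ many null projections and hence null by the inductive hypothesis, so $<\kappa$-additivity gives $\phi(\bigvee q_\alpha)=\sum_\beta\phi(s_{\beta+1}-s_\beta)=0$. This shows $\mathcal{F}_\phi$ is closed under meets of fewer than $\kappa$ projections; this step, with its bookkeeping of partial joins and increments, is the main technical obstacle.

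Next I would choose a self-adjoint generating set for $N$ of size $\lambda<\kappa$ and pass to the $(\mathbb{Q}+i\mathbb{Q})$-$*$-algebra $B_0$ it generates: this is a weak$^*$-dense subset of $N$ of cardinality $\max(\lambda,\aleph_0)<\kappa$. For each $y\in B_0$, Lemma \ref{excision} supplies $p_y\in\mathcal{F}_\phi$ with $p_yyp_y=\phi(y)p_y$, and by the transfinite regularity just established, $p=\bigwedge_{y\in B_0}p_y$ lies in $\mathcal{F}_\phi$. Since excision is inherited by smaller projections in $\mathcal{F}_\phi$, $p$ excises every element of $B_0$.

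To extend excision from $B_0$ to all of $N$ I would use normality of compression. The map $\rho\colon N\to B(pH)$, $\rho(y)=pyp$, is weak$^*$-continuous, and $\rho(B_0)\subseteq\mathbb{C}1_{pH}$, which is a weak$^*$-closed subspace of $B(pH)$; hence $\rho^{-1}(\mathbb{C}1_{pH})$ is weak$^*$-closed and contains the weak$^*$-dense set $B_0$, so $\rho(N)\subseteq\mathbb{C}1_{pH}$. Thus $pyp=c(y)p$ for each $y\in N$. Working in the GNS representation $\pi$ of $\phi$ with cyclic vector $v$, where $\pi(p)v=v$ because $p\in\mathcal{F}_\phi$, one computes $\phi(y)=\langle\pi(y)v,v\rangle=\langle\pi(pyp)v,v\rangle=c(y)$, so in fact $pyp=\phi(y)p$ for all $y\in N$. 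Now given any family $\{q_\alpha\}$ of mutually orthogonal projections in $N$ with sum $q$, compressing and using normality of $\rho$ gives $\phi(q)p=pqp=\sum_\alpha pq_\alpha p=\big(\sum_\alpha\phi(q_\alpha)\big)p$, whence $\phi(q)=\sum_\alpha\phi(q_\alpha)$. So $\phi|_N$ is completely additive and therefore normal.

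For the final assertion, a countably additive pure state is $<\aleph_1$-additive. Given a weak$^*$-convergent sequence $x_n\to x$ in $M$, the von Neumann subalgebra $N$ generated by $\{x\}\cup\{x_n\}$ is generated by countably many elements, so $\phi|_N$ is normal by the case $\kappa=\aleph_1$, and thus $\phi(x_n)\to\phi(x)$. Hence $\phi$ is sequentially weak$^*$ continuous.
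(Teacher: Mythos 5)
Your proof is correct and follows essentially the same route as the paper's: both apply Lemma \ref{excision} to a weak*-dense rational $*$-algebra of size ${<}\,\kappa$, use a transfinite telescoping argument powered by ${<}\,\kappa$-additivity to show that the meet of the excising projections remains in $\mathcal{F}_\phi$, and then extend excision to all of $N$ by weak* density of that algebra and normality of the compression $x \mapsto pxp$. The only differences are organizational: you isolate the induction as a closure property of $\mathcal{F}_\phi$ under meets of fewer than $\kappa$ projections and finish via complete additivity and \cite[Corollary 3.11]{Tak}, whereas the paper runs the induction on the partial meets directly and concludes by verifying weak* continuity of $\phi|_N$ on nets.
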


\begin{proof}
If $N$ is generated by $\lambda$ many elements, where $\lambda < \kappa$
is infinite, then the set of polynomials, with complex rational coefficients,
in these generators and their adjoints still has cardinality $\lambda$ and is
weak* dense in $N$. Let $S = \{x_\alpha: \alpha \in \lambda\}$ be this set.
For each $\alpha \in \lambda$ find
$q_\alpha \in \mathcal{F}_\phi$ which excises $\phi$ for $x_\alpha$. Then let
$p_0 = 1$, $p_\alpha = \bigwedge_{\beta < \alpha} q_\beta$ for $\alpha \geq 1$,
and $p = p_\lambda = \bigwedge_{\beta \in \lambda} q_\beta$. We prove
by transfinite induction that each $p_\alpha$ belongs to $\mathcal{F}_\phi$.
At successor ordinals this follows from Lemma \ref{qfilter}. At limit
ordinals, assuming $p_\beta \in \mathcal{F}_\phi$ for all $\beta < \alpha$,
we have $\phi(p_\beta - p_{\beta + 1}) = 0$ for all such $\beta$, and
$\sum_{\beta < \alpha} (p_\beta - p_{\beta+1}) = 1 - p_\alpha$. So by
${<}\,\kappa$-additivity $\phi(1 - p_\alpha) = 0$, i.e., $\phi(p_\alpha) = 1$.
This completes the induction and we conclude that $\phi(p) = 1$, i.e.,
$p \in \mathcal{F}_\phi$. We have shown that there exists a projection
which simultaneously excises $\phi$ for every element of $S$.

Now any $x \in N$ is the weak* limit of a net $(x_i)$ in $S$,
so that $\phi(x_i)p = px_ip \to pxp$ weak*. This shows that $pxp = ap$
for some $a \in \mathbb{C}$, and in fact $\phi(x) = \phi(pxp) = \phi(ap) = a$.
So $p$ excises $\phi$ for $x$ as well. We conclude that $\phi$ excises $\phi$
for every element of $N$. Then if $(y_j)$ in any net in $N$ which converges
weak* to some $y \in N$, we have
$$\phi(y_i)p = py_ip \to pyp = \phi(y)p$$
where the middle convergence is weak*. Thus $\phi(y_i) \to \phi(y)$. This
shows that the restriction of $\phi$ to $N$ is normal.
\end{proof}

We thank Ilijas Farah for pointing out that this version of Theorem
\ref{weakstar} follows from an earlier, weaker version.

If it is consistent with ZFC that measurable cardinals exist, then it is
consistent with ZFC that $2^{\aleph_0}$ is Ulam real-valued measurable.
In this case, according to Proposition \ref{measvsstate} (i) there would
be a singular countably additive state on $l^\infty(2^{\aleph_0})$. But
$l^\infty(2^{\aleph_0})$ is countably generated as a von Neumann algebra
(for details see the proof of Lemma \ref{bigger} below), so this would be a
countably additive singular state on a countably generated von Neumann
algebra. Thus, it is consistent that Theorem
\ref{weakstar} can fail for states which are not pure.

On the other hand, if ZFC is consistent then so is ZFC + ``real-valued
measurable cardinals do not exist''. Under this hypothesis, the restriction
of any countably additive state on any von Neumann algebra to any atomic
abelian von Neumann subalgebra must be normal. (After removing its normal
part what is left would be singular and countably additive, and hence zero.)
But by Proposition \ref{addvsnormal}
this simply means that any countably additive state must be normal. So it
is also consistent with ZFC that every countably additive state is normal,
making Theorem \ref{weakstar} true for general states in a strong form.

\section{Quantum measurability}

We prove Theorem \ref{maintheorem} (iv). The forward direction uses the
following generalization of the Kadison-Singer problem to Hilbert spaces
of arbitrary dimension.

\begin{theorem}\label{kadsing}
Let $\kappa$ be an infinite cardinal. Then any pure state on the diagonal
subalgebra of $B(l^2(\kappa))$ has a unique (necessarily pure) state
extension to $B(l^2(\kappa))$.
\end{theorem}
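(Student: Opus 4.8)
The plan is to identify the diagonal subalgebra $D$ with $l^\infty(\kappa)$ and to recall that its pure states are exactly the characters, i.e.\ the points of the Gelfand spectrum $\beta\kappa$, so a pure state $\rho$ on $D$ corresponds to an ultrafilter $\mathcal{U}$ on $\kappa$ via $\rho(\chi_S) = 1 \iff S \in \mathcal{U}$. Let $E \colon B(l^2(\kappa)) \to D$ be the canonical (diagonal) conditional expectation, so that $\rho \circ E$ is at least one state extension of $\rho$. I would prove it is the only one by showing that every state $\psi$ on $B(l^2(\kappa))$ with $\psi|_D = \rho$ satisfies $\psi = \rho \circ E$. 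Writing an arbitrary $A$ as $E(A) + (A - E(A))$ and noting that $\rho(E(A)) = \psi(E(A))$ because $E(A) \in D$, the whole problem reduces to showing that $\psi$ annihilates every operator with zero diagonal.

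The link between $\psi$ and $\mathcal{U}$ is the following excision observation: if $p = \chi_S$ is a diagonal projection with $S \in \mathcal{U}$ then $\psi(p) = \rho(p) = 1$, so a Cauchy--Schwarz argument gives $\psi(A) = \psi(pAp)$ for every $A$, whence $|\psi(A)| \le \|\chi_S A \chi_S\|$. Consequently it suffices to prove the following paving statement: for every self-adjoint $A$ with zero diagonal and $\|A\| \le 1$, and every $\epsilon > 0$, there is $S \in \mathcal{U}$ with $\|\chi_S A \chi_S\| \le \epsilon$. Decomposing a general zero-diagonal operator into its real and imaginary parts, each of which again has zero diagonal since $E(A^*) = E(A)^*$, this forces $\psi(A) = 0$ and completes the reduction.

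The input that supplies such an $S$ is the resolution of the Kadison--Singer problem in its paving form: by the Marcus--Spielman--Srivastava theorem, for each $\epsilon > 0$ there is an integer $r = r(\epsilon)$, \emph{independent of the size of the matrix}, such that every finite self-adjoint matrix with zero diagonal and norm $\le 1$ admits a partition of its index set into $r$ blocks on each of which the compression has norm $\le \epsilon$. This dimension-independence is exactly what lets the result pass to arbitrary $\kappa$, and I would effect the passage by a compactness argument on the space of colourings $\{1,\dots,r\}^\kappa$, compact in the product topology. For each finite $F \subseteq \kappa$ let $C_F$ be the set of colourings $c$ with $\|\chi_{F \cap c^{-1}(j)}\,A\,\chi_{F \cap c^{-1}(j)}\| \le \epsilon$ for all $j$; each $C_F$ is closed, since it depends only on $c|_F$, and nonempty, by applying the finite paving theorem to the principal submatrix $\chi_F A \chi_F$ and extending the colouring arbitrarily. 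Monotonicity of compression norms gives $C_{F_1} \cap C_{F_2} \supseteq C_{F_1 \cup F_2}$, so the family has the finite intersection property, and a point $c \in \bigcap_F C_F$ yields a partition $\kappa = S_1 \sqcup \cdots \sqcup S_r$ with $\|\chi_{F \cap S_j} A \chi_{F \cap S_j}\| \le \epsilon$ for all finite $F$. Taking the supremum over $F$, via $\|\chi_{S_j} A \chi_{S_j}\| = \sup_F \|\chi_{F \cap S_j} A \chi_{F \cap S_j}\|$, gives $\|\chi_{S_j} A \chi_{S_j}\| \le \epsilon$ for each $j$; since the $S_j$ partition $\kappa$ into finitely many pieces, exactly one lies in the ultrafilter $\mathcal{U}$, and this is the desired $S$.

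Combining the above, $\psi = \rho \circ E$ for every extension $\psi$, which is the asserted uniqueness; purity of this unique extension is then automatic, for if it decomposed as $\tfrac12(\psi_1 + \psi_2)$ then restricting to $D$ and using purity of $\rho$ would force $\psi_1|_D = \psi_2|_D = \rho$, whence $\psi_1 = \psi_2$ by the uniqueness just proved. The genuinely deep ingredient is of course the Marcus--Spielman--Srivastava theorem, which I would cite; the only real work on our side is the compactness transfer to uncountable index sets and the verification that it preserves the uniform paving constant $r(\epsilon)$. The principal-ultrafilter case, where $\rho$ is a vector state, is subsumed automatically: there $\{\alpha\} \in \mathcal{U}$ and zero-diagonal operators already have vanishing compression to $\{\alpha\}$, so no paving is needed.
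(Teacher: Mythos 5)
Your proposal is correct and takes essentially the same route as the paper: the Marcus--Spielman--Srivastava theorem in paving form, transferred to arbitrary $\kappa$ by a compactness argument on the coloring space $\{1,\dots,k\}^\kappa$, after which paving yields uniqueness of the extension. The only cosmetic differences are that you run the compactness argument over finite rather than countable index sets (so you need only the dimension-free finite paving theorem), and you write out the Cauchy--Schwarz/ultrafilter argument showing that paving forces every extension to equal $\rho \circ E$, a step the paper delegates to a citation of Kadison--Singer.
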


\begin{proof}
The problem was recently solved for $\kappa = \omega$ by Marcus, Spielman,
and Srivastava \cite{MSS}. A well known equivalent version \cite[Lemma
5 on p. 395]{KS} states that for any $\epsilon > 0$, there exists
$k \in \mathbb{N}$ such that any $A \in B(l^2(\omega))$, $\|A\| \leq 1$,
with null diagonal can be $k$-paved, meaning that there exist diagonal
projections $p_1, \ldots, p_k$ such that $\sum p_i = 1$ and
$\|p_iAp_i\| \leq \epsilon$ for all $i$. Now if $\kappa > \omega$ and
$A \in B(l^2(\kappa))$, $\|A\| \leq 1$, has null diagonal, then given
$\epsilon > 0$, for any countable subset $S$ of $\kappa$ the
compression of $A$ to $l^2(S)$ (i.e., the operator $p_SAp_S$ where $p_S$
is the orthogonal projection onto $l^2(S)$) can be $k$-paved. This
$k$-paving corresponds to a $k$-coloring of the set $S$. Ordering the
countable subsets of $\kappa$ by inclusion and letting $C_S$ be the set
of $k$-colorings of $S$ which correspond to $k$-pavings of $p_SAp_S$, we
find that the sets $C_S$ are nonempty and closed in $\{1, \ldots, k\}^S$
(giving $\{1, \ldots, k\}$ the discrete topology).
Therefore, since $\{1, \ldots, k\}^\kappa$ is compact,
they have a nonempty intersection. This yields a $k$-paving
of $A$. So every operator in the unit ball of $B(l^2(\kappa))$ with null
diagonal can be paved, and this implies that pure states have unique
extensions.
\end{proof}

For the reverse direction of the upcoming theorem, we have to prove that if
there is a singular ${<}\,\kappa$-additive pure state on $B(l^2(\kappa))$
then $\kappa$ is measurable. We know from Theorem \ref{rvmeas} that $\kappa$
is real-valued measurable; to complete the proof that it is measurable,
we just have to show $\kappa > 2^{\aleph_0}$. That is the content of
the next lemma.

\begin{lemma}\label{bigger}
Let $\kappa$ be an uncountable cardinal and suppose there is a singular
countably additive pure state on $B(l^2(\kappa))$. Then
$\kappa > 2^{\aleph_0}$.
\end{lemma}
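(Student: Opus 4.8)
The plan is to argue by contradiction: suppose $\phi$ is a singular countably additive pure state on $B(l^2(\kappa))$ and, toward a contradiction, that $\kappa \le 2^{\aleph_0}$ (recall $\kappa$ is uncountable). I identify the diagonal subalgebra $D$ with $l^\infty(\kappa)$. The structural fact driving the argument is that $l^\infty(\kappa)$ is countably generated as a von Neumann algebra whenever $\kappa \le 2^{\aleph_0}$. Granting this, I apply Theorem \ref{weakstar} with the uncountable cardinal $\aleph_1$ playing the role of ``$\kappa$'': a countably additive state is precisely a ${<}\,\aleph_1$-additive state, and a countably generated subalgebra is generated by fewer than $\aleph_1$ elements, so the theorem forces $\phi|_D$ to be normal. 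This is the step where purity is indispensable, since Theorem \ref{weakstar} is known to fail for general (non-pure) states precisely on algebras of this kind.

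To establish countable generation, I would fix an injection $\kappa \hookrightarrow \{0,1\}^{\mathbb N}$, which exists because $\kappa \le 2^{\aleph_0}$, and thereby regard $\kappa$ as a subset of the Cantor set. For each $n$ let $p_n \in l^\infty(\kappa)$ be the projection corresponding to the set $\{x \in \kappa : x_n = 1\}$. These countably many projections separate the points of $\kappa$, and each singleton $\{x\}$ is recovered as the countable meet $\bigwedge_n r_n$, where $r_n = p_n$ if $x_n = 1$ and $r_n = 1 - p_n$ otherwise. Since a von Neumann algebra is closed under arbitrary meets of projections, the algebra generated by $\{p_n\}$ contains every atom of $l^\infty(\kappa)$; being weak* closed, it must then be all of $l^\infty(\kappa)$.

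Finally I derive the contradiction. Because $\phi$ is singular it vanishes on every rank one projection, in particular on each diagonal rank one projection $P_\alpha$ onto $\mathbb{C} e_\alpha$. The $P_\alpha$ are exactly the atoms of $D$; they are mutually orthogonal and satisfy $\sum_\alpha P_\alpha = 1$. Normality of $\phi|_D$ is equivalent to complete additivity, so
$$1 = \phi(1) = \phi\Big(\sum_\alpha P_\alpha\Big) = \sum_\alpha \phi(P_\alpha) = 0,$$
which is absurd. Hence no such $\phi$ can exist when $\kappa \le 2^{\aleph_0}$, and therefore $\kappa > 2^{\aleph_0}$.

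The only genuinely substantive step is the countable generation of $l^\infty(\kappa)$; everything else is bookkeeping. The subtlety I would flag is that the hypothesis supplies only countable additivity, not ${<}\,\kappa$-additivity, so Theorem \ref{weakstar} must be invoked at the level $\aleph_1$ rather than at $\kappa$ itself. This is exactly why passing to a \emph{countably} generated subalgebra, namely the diagonal, is the crucial maneuver.
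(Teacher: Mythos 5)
Your proposal is correct and follows essentially the same route as the paper's own proof: embed $\kappa$ into $\{0,1\}^{\mathbb N}$ to exhibit a countable family of projections generating the diagonal $l^\infty(\kappa)$ as a von Neumann algebra (the paper uses initial-segment projections, you use coordinate projections, but these generate the same algebra via countable meets recovering the atoms), then apply Theorem \ref{weakstar} at level $\aleph_1$ to force normality of $\phi$ on the diagonal, contradicting singularity. Your explicit spelling-out of the final contradiction via $1=\phi(1)=\sum_\alpha\phi(P_\alpha)=0$ is just an unpacking of the paper's closing remark that a singular state restricts to a singular state on the diagonal.
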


\begin{proof}


Assume $\kappa \leq 2^{\aleph_0}$. Fix an orthonormal basis
$$\{e_{b_1 b_2 \ldots}: (b_1, b_2, \ldots ) \in S\}$$
of $l^2(\kappa)$ where $S$ is some subset of $\{0,1\}^\omega$.
For each finite sequence $(b_1, \ldots, b_n) \in \{0,1\}^n$
let $p_{b_1 \ldots b_n}$ be the orthogonal projection onto
the span of all the basis vectors that share this initial segment.
This projection could be zero.

For any sequence $(b_1, b_2, \ldots)$ in $S$ the meet of the projections
$p_{b_1}$, $p_{b_1b_2}$, $\ldots$ is the rank one projection onto the span
of $e_{b_1 b_2 \ldots}$. This shows that the diagonal subalgebra of
$B(l^2(\kappa))$ is generated by the projections $p_{b_1, \ldots, b_n}$.
This is a countable family, so we conclude that the diagonal subalgebra
is countably generated as a von Neumann algebra, and it then follows
from Theorem \ref{weakstar} that any countably additive pure state on
$B(l^2(\kappa))$ must be normal on the diagonal. But any singular state
on $B(l^2(\kappa))$ restricts to a singular state on the diagonal, so
we conclude that no countably additive pure state on $B(l^2(\kappa))$ is
singular.
\end{proof}

In connection with the last proof, note that $B(l^2(\kappa))$ is not
countably generated as a von Neumann algebra for any $\kappa > \aleph_0$.
Any countably generated von Neumann algebra acting on $l^2(\kappa)$
contains a weak* dense separable C*-subalgebra. But if $A$ is any separable
C*-algebra acting on $l^2(\kappa)$, then for any nonzero $v \in l^2(\kappa)$
the set $\overline{Av}$ is a separable invariant subspace for $A$, showing
that $A'$ is nontrivial and therefore $A$ is not weak* dense in
$B(l^2(\kappa))$.

We are ready to prove Theorem \ref{maintheorem} (iv).

\begin{theorem}\label{meas}
Let $\kappa$ be an uncountable cardinal. Then $\kappa$ is measurable if and
only if there is a singular ${<}\,\kappa$-additive pure state on
$B(l^2(\kappa))$.
\end{theorem}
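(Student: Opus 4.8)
My plan is to prove the two implications separately, noting that the reverse direction is essentially already assembled from results in hand, while the forward direction is where the generalized Kadison--Singer theorem does the essential work. I would treat the reverse direction first. Suppose there is a singular ${<}\,\kappa$-additive pure state $\phi$ on $B(l^2(\kappa))$. Viewing $\phi$ merely as a singular ${<}\,\kappa$-additive state, Theorem~\ref{rvmeas} already tells us $\kappa$ is real-valued measurable. Since $\kappa$ is uncountable, ${<}\,\kappa$-additivity implies countable additivity, so $\phi$ is a singular countably additive pure state and Lemma~\ref{bigger} yields $\kappa > 2^{\aleph_0}$. Invoking the set-theoretic fact recalled in Section~1 that real-valued measurability and measurability coincide among cardinals exceeding $2^{\aleph_0}$, I conclude that $\kappa$ is measurable.

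For the forward direction I would simply rerun the construction of Theorem~\ref{rvmeas}, now starting from a two-valued measure and upgrading the resulting state to a pure one via Theorem~\ref{kadsing}. Assuming $\kappa$ measurable, Proposition~\ref{measvsstate}~(iv) provides a singular ${<}\,\kappa$-additive pure state $\rho$ on the diagonal subalgebra, identified with $l^\infty(\kappa)$; equivalently $\rho = \int (\cdot)\, d\mu$ for a ${<}\,\kappa$-additive $\{0,1\}$-valued probability measure $\mu$ on $\kappa$ that vanishes on singletons. Let $E$ be the normal conditional expectation of $B(l^2(\kappa))$ onto the diagonal and put $\phi = \rho \circ E$. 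This is exactly the state manufactured in Theorem~\ref{rvmeas}, so the identical computation --- applying the transfinite monotone convergence Lemma~\ref{summing} to the positive diagonal elements $f_\alpha = E(P_\alpha)$ attached to any family $\{P_\alpha : \alpha \in \lambda\}$ of mutually orthogonal projections with $\lambda < \kappa$ --- shows that $\phi$ is ${<}\,\kappa$-additive. Singularity is immediate because $E$ maps the compact operators into $c_0(\kappa)$, on which the singular state $\rho$ vanishes.

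The only genuinely new ingredient, and the step I expect to be the main obstacle, is the purity of $\phi$. My plan is to notice that $\phi = \rho \circ E$ restricts to $\rho$ on the diagonal, so $\phi$ is a state extension of the pure diagonal state $\rho$; by Theorem~\ref{kadsing} this extension is unique and is necessarily pure, whence $\phi$ is pure. (Uniqueness by itself already forces purity: any convex decomposition of $\phi$ restricts to a decomposition of the extreme point $\rho$, producing two extensions of $\rho$ that must both equal $\phi$.) I single this step out because for a general pure state on a masa the composite $\rho \circ E$ need \emph{not} be pure --- that it is here is precisely the force of the generalized Kadison--Singer theorem, and it is exactly the obstruction that prevents one from blithely assuming Anderson's conjecture. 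With purity in hand, the three desired properties --- singular, ${<}\,\kappa$-additive, and pure --- are all verified, completing the forward direction.
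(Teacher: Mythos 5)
Your proposal is correct and follows essentially the same route as the paper: the forward direction composes the pure state from Proposition~\ref{measvsstate}~(iv) with the diagonal conditional expectation and invokes Theorem~\ref{kadsing} for purity, while the reverse direction combines Theorem~\ref{rvmeas}, Lemma~\ref{bigger}, and the fact that real-valued measurable cardinals above $2^{\aleph_0}$ are measurable. Your explicit observations --- that ${<}\,\kappa$-additivity implies countable additivity for uncountable $\kappa$ (needed to apply Lemma~\ref{bigger}), and that unique extension of an extreme point forces purity of the extension --- are details the paper leaves implicit, and both are sound.
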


\begin{proof}
Suppose $\kappa$ is measurable and per Proposition \ref{measvsstate} (iv)
let $\phi$ be a singular ${<}\,\kappa$-additive pure state on
$l^\infty(\kappa)$. As in the proof of Theorem \ref{ulamrvmeas}, let
$E$ be the conditional expectation from $B(l^2(\kappa))$ onto the
diagonal subalgebra which is identified with $l^\infty(\kappa)$. Then
$\phi\circ E$ is a state on $B(l^2(\kappa))$, and as its restriction
to the diagonal masa is pure, Theorem \ref{kadsing} implies that it
is pure. It is singular and ${<}\,\kappa$-additive just as in the proofs
of Theorems \ref{ulamrvmeas} and \ref{rvmeas}.

Conversely, suppose there is a singular ${<}\,\kappa$-additive pure state
on $B(l^2(\kappa))$. According to Theorem \ref{rvmeas} this means that
$\kappa$ is real-valued measurable, and it is $> 2^{\aleph_0}$ by Lemma
\ref{bigger}. So $\kappa$ must be measurable.
\end{proof}

\section{Quantum Ulam measurability}

Theorem \ref{maintheorem} (ii) still needs proof. We already know that
if $B(l^2(\kappa))$ has a singular countably additive pure state then
$\kappa$ is Ulam real-valued measurable (Theorem \ref{ulamrvmeas}),
and hence $\geq$ the smallest real-valued measurable cardinal. We
also know from Lemma \ref{bigger} that $\kappa > 2^{\aleph_0}$. But
we cannot conclude from these two facts that $\kappa$ is $\geq$ the
smallest measurable cardinal, i.e., is Ulam measurable. That requires
an additional argument.

We need the following variation on Lemma \ref{bigger}.

\begin{lemma}\label{bigger2}
Let $\kappa$ be an uncountable cardinal and let $M$ be a von Neumann
algebra which contains $l^\infty(\kappa)$ as a (not necessarily unital)
von Neumann subalgebra. Suppose there is a countably additive pure state
on $M$ whose restriction to $l^\infty(\kappa)$ is nonzero and singular. Then
$\kappa > 2^{\aleph_0}$.
\end{lemma}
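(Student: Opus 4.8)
The plan is to run the argument of Lemma \ref{bigger} with essentially no change, the one new wrinkle being that $l^\infty(\kappa)$ now sits inside $M$ as a possibly non-unital von Neumann subalgebra. I would argue by contradiction: assuming $\kappa \leq 2^{\aleph_0}$, I will force the restriction of $\phi$ to $l^\infty(\kappa)$ to be normal, which is irreconcilable with its being simultaneously nonzero and singular.

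First I would record that $l^\infty(\kappa)$ is countably generated as a von Neumann algebra whenever $\kappa \leq 2^{\aleph_0}$; this is an intrinsic feature of the abstract algebra, established exactly as in Lemma \ref{bigger}. Identifying the index set of the atoms with a subset of $\{0,1\}^\omega$, the countably many ``dyadic'' projections $p_{b_1\ldots b_n}$ (the join of all atoms sharing the finite initial segment $b_1\ldots b_n$) generate $l^\infty(\kappa)$, since the meet along any branch recovers a single atom. These projections lie in the given copy of $l^\infty(\kappa)$ inside $M$.

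Next I would apply Theorem \ref{weakstar} with the cardinal $\aleph_1$: since $\phi$ is a countably additive (that is, ${<}\,\aleph_1$-additive) pure state on $M$ and $l^\infty(\kappa)$ is generated by fewer than $\aleph_1$ elements, the restriction $\phi|_{l^\infty(\kappa)}$ is normal. The one point that needs attention is that $l^\infty(\kappa)$ need not contain the unit $1_M$, whereas Theorem \ref{weakstar} is phrased for subalgebras; I expect this non-unitality to be the only genuine obstacle, and it is slight. One may dispose of it in either of two ways. Inspection: the excision argument proving Theorem \ref{weakstar} uses only the generators of the subalgebra, Lemma \ref{excision} (which applies to any element of $M$), and weak* density of the polynomials in those generators; it employs the ambient unit $1_M \in M$ but never requires the subalgebra itself to contain it, so it applies verbatim to non-unital subalgebras. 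Bookkeeping: letting $e$ be the unit of $l^\infty(\kappa)$, replace $l^\infty(\kappa)$ by the unital von Neumann subalgebra $l^\infty(\kappa) \oplus \mathbb{C}(1_M - e)$ of $M$, which is still countably generated, apply Theorem \ref{weakstar} to it, and restrict back.

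Finally I would close the loop. Normality of $\phi|_{l^\infty(\kappa)}$ means it is completely additive on the atomic abelian algebra $l^\infty(\kappa)$, so $\phi(e) = \sum_\alpha \phi(\chi_{\{\alpha\}})$; singularity annihilates every summand, forcing $\phi(e) = 0$ and hence $\phi|_{l^\infty(\kappa)} = 0$. This contradicts the assumption that the restriction is nonzero, and therefore $\kappa > 2^{\aleph_0}$.
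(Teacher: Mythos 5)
Your proposal is correct and follows essentially the same route as the paper: assume $\kappa \leq 2^{\aleph_0}$, observe that $l^\infty(\kappa)$ is then countably generated via the dyadic projections as in Lemma \ref{bigger}, and apply Theorem \ref{weakstar} (with $\aleph_1$) to force the restriction of the countably additive pure state to be normal, contradicting that it is nonzero and singular. Your explicit handling of the non-unital embedding --- either by inspecting that the excision argument of Theorem \ref{weakstar} never needs $1_M$ to lie in the subalgebra, or by passing to $l^\infty(\kappa) \oplus \mathbb{C}(1_M - e)$ --- fills in a point the paper's one-line proof leaves implicit, but it is a refinement of the same argument rather than a different one.
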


\begin{proof}
As in the proof of Lemma \ref{bigger}, this follows from the fact that
if $\kappa \leq 2^{\aleph_0}$ then $l^\infty(\kappa)$ is countably
generated, and hence the restriction to it of any countably additive pure
state on $M$ must be normal by Theorem \ref{weakstar}.
\end{proof}

%
%

The point is that we do not assume the restriction of $\phi$ to
$l^\infty(\kappa)$ is pure. All we know about $\phi|_{l^\infty(\kappa)}$
is that it is a countably additive nonzero singular state, and this does
not in itself contradict $\kappa \leq 2^{\aleph_0}$.

Again, the proof of Theorem \ref{ulammeas} below could be modified in an
obvious way so as to only require Lemma \ref{bigger2} for $M = B(H)$.

\begin{theorem}\label{ulammeas}
Let $\kappa$ be an uncountable cardinal. Then $\kappa$ is Ulam measurable
if and only if there is a singular countably additive pure state on
$B(l^2(\kappa))$.
\end{theorem}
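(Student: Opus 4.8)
The plan is to prove the two implications separately. The forward direction will follow the template of Theorem \ref{meas}, while the reverse direction will rest on an Ulam-type dichotomy for the measure obtained by restricting to the diagonal, combined with Lemma \ref{bigger2}. For the forward direction, suppose $\kappa$ is Ulam measurable. By Proposition \ref{measvsstate}(ii) there is a singular countably additive pure state $\phi_0$ on $l^\infty(\kappa)$. Letting $E$ be the normal conditional expectation from $B(l^2(\kappa))$ onto the diagonal subalgebra identified with $l^\infty(\kappa)$, exactly as in the proof of Theorem \ref{ulamrvmeas}, I would set $\phi = \phi_0 \circ E$. As before, $\phi$ is singular because $E$ carries the compacts into $c_0(\kappa)$, and $\phi$ is countably additive by the normality of $E$ and the monotone convergence computation of Theorem \ref{ulamrvmeas}. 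Since the restriction of $\phi$ to the diagonal masa equals the pure state $\phi_0$, Theorem \ref{kadsing} forces $\phi$ itself to be pure, which gives the desired state.

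For the reverse direction, assume $\phi$ is a singular countably additive pure state on $B(l^2(\kappa))$. Restricting $\phi$ to the diagonal and passing to the associated measure via Proposition \ref{measvsstate}(i) yields a countably additive probability measure $\mu$ on $\kappa$ that vanishes on singletons. I would then split into two cases according to whether $\mu$ has an atom. If $\mu$ has an atom $A$, then $B \mapsto \mu(A \cap B)/\mu(A)$ is a nonzero $\{0,1\}$-valued countably additive measure on $\kappa$ vanishing on singletons, so $\kappa$ is Ulam measurable by definition and we are finished. The substantive case is when $\mu$ is atomless, and here I would aim to derive a contradiction.

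In the atomless case I would bisect repeatedly, using that an atomless countably additive measure on $\mathcal{P}(\kappa)$ attains every intermediate value, to build a binary tree of subsets $\{X_\sigma : \sigma \in 2^{<\omega}\}$ with $X_\emptyset = \kappa$, $X_{\sigma 0} \sqcup X_{\sigma 1} = X_\sigma$, and $\mu(X_\sigma) = 2^{-|\sigma|}$. This defines a map $b : \kappa \to 2^\omega$, and the pushforward $\nu = b_*\mu$ is an atomless countably additive probability measure on $2^\omega$ vanishing on singletons, since $\nu(\{s\}) = \mu\big(\bigcap_n X_{s_1 \cdots s_n}\big) = \lim_n 2^{-n} = 0$. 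The normal unital $*$-homomorphism $\pi : l^\infty(2^\omega) \to l^\infty(\kappa)$, $g \mapsto g \circ b$, has image a von Neumann subalgebra $N$ of the diagonal isomorphic to $l^\infty(\lambda)$, where $\lambda = |\mathrm{range}(b)|$ is uncountable (an atomless probability measure cannot vanish on the singletons of a countable set) and satisfies $\lambda \leq 2^{\aleph_0}$. Under this identification $\phi|_N$ corresponds to $\nu$, hence is a nonzero singular state. Applying Lemma \ref{bigger2} with $M = B(l^2(\kappa))$ and this copy of $l^\infty(\lambda)$ then forces $\lambda > 2^{\aleph_0}$, contradicting $\lambda \leq 2^{\aleph_0}$. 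Thus the atomless case cannot occur, and $\kappa$ is Ulam measurable.

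The main obstacle is the atomless case of the reverse direction. Everything hinges on packaging the set-theoretic bisection of $\mu$ as a normal $*$-homomorphism whose image is a genuine copy of $l^\infty(\lambda)$ sitting inside $B(l^2(\kappa))$ with $\lambda \leq 2^{\aleph_0}$, so that Lemma \ref{bigger2} becomes applicable. The crucial subtlety, and precisely the reason Lemma \ref{bigger2} rather than Lemma \ref{bigger} is needed, is that the restriction $\phi|_N$ is emphatically \emph{not} pure, being the state of an atomless measure; yet Lemma \ref{bigger2} demands purity only of $\phi$ on the ambient algebra, not of its restriction, which is exactly what makes the contradiction go through.
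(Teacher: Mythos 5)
Your proof is correct, but its reverse direction takes a genuinely different route from the paper's. The forward direction is the same in both: Proposition \ref{measvsstate} (ii), composition with the normal conditional expectation onto the diagonal, and Theorem \ref{kadsing} to get purity. For the converse, the paper argues by minimality: it considers the smallest cardinal $\kappa_0$ for which some von Neumann algebra $M$ contains $l^\infty(\kappa_0)$ with a countably additive pure state whose restriction is nonzero and singular, uses that minimality to show such a state is automatically ${<}\,\kappa_0$-additive on $M$, then combines Proposition \ref{measvsstate} (iii) with Lemma \ref{bigger2} to conclude $\kappa_0$ is real-valued measurable and exceeds $2^{\aleph_0}$, hence is measurable, so that any $\kappa$ carrying such a state on $B(l^2(\kappa))$ is $\geq \kappa_0$ and thus Ulam measurable. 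You instead run Ulam's atomic/atomless dichotomy directly on the measure $\mu$ coming from the diagonal restriction: an atom immediately produces the required $\{0,1\}$-valued measure, and the atomless case is killed by the Cantor-tree bisection (Sierpi\'{n}ski's intermediate-value theorem for atomless countably additive measures) followed by Lemma \ref{bigger2} applied to the pushforward copy of $l^\infty(\lambda)$, $\lambda \leq 2^{\aleph_0}$. The striking feature of your argument, which you correctly identify as the crux, is that the atomless case cannot be excluded by set theory alone---it is consistent with ZFC that there is an atomless countably additive probability measure defined on all subsets of $2^{\aleph_0}$---so the contradiction genuinely requires the operator-algebraic input of purity of $\phi$ on the ambient $B(l^2(\kappa))$, exactly what Lemma \ref{bigger2} encodes. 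As for what each approach buys: yours is more direct and self-contained, inlining the classical fact that a real-valued measurable cardinal above $2^{\aleph_0}$ is measurable rather than quoting it, and bypassing both the minimality argument and the detour through Theorem \ref{rvmeas}; the paper's formulation, on the other hand, proves the stronger intermediate claim that the pure state is ${<}\,\kappa_0$-additive on an arbitrary ambient algebra $M$, and this claim is reused verbatim in the proof of Theorem \ref{vnalg2}, so the abstraction pays for itself later. Two small points you should make explicit in a write-up: the bisection step needs a citation (Sierpi\'{n}ski), and uncountability of $\lambda$ follows from countable additivity of $\nu$ plus its vanishing on singletons, not from atomlessness as such.
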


\begin{proof}
The proof of the forward direction is virtually identical to the proof
of the forward direction of Theorem \ref{meas}, just with countable
additivity in place of ${<}\,\kappa$-additivity and using Proposition
\ref{measvsstate} (ii) instead of Proposition \ref{measvsstate} (iv).

For the reverse direction, let $\kappa$ be the smallest cardinal with
the property that there is a von Neumann algebra $M$ containing (possibly
nonunitally)
$l^\infty(\kappa)$ as a von Neumann subalgebra and a countably additive
pure state $\phi$ on $M$ whose restriction to $l^\infty(\kappa)$ is nonzero
and singular. We claim that $\phi$ must be ${<}\,\kappa$-additive on $M$. If
so, then it is ${<}\,\kappa$-additive on the embedded $l^\infty(\kappa)$
and this implies via Proposition \ref{measvsstate} (iii) that $\kappa$ is
real-valued measurable. But $\kappa > 2^{\aleph_0}$ by Lemma \ref{bigger2},
so it must in fact be measurable. Finally, any singular countably additive
pure state on $B(l^2(\kappa'))$, for any cardinal $\kappa'$, restricts to
a nonzero, singular state on the diagonal $l^\infty(\kappa')$ and thus
we must have $\kappa' \geq \kappa$, i.e., $\kappa'$ must be Ulam measurable.
This completes the proof modulo the claim.

To prove the claim, fix $M$ and $\phi$ as above and let
$\{p_\alpha: \alpha \in \lambda\}$ be a family of mutually orthogonal
nonzero projections in $M$ where $\lambda < \kappa$. We want to
show that $\phi(\sum p_\alpha) = \sum \phi(p_\alpha)$. The set of $p_\alpha$
for which $\phi(p_\alpha) \neq 0$ is countable, and by countable additivity
$\phi$ is additive on that set. Thus, by removing the indices where
$\phi(p_\alpha) \neq 0$, we can assume $\phi(p_\alpha) = 0$ for all $\alpha$.
Then the von Neumann subalgebra generated by the $p_\alpha$ is
isomorphic to $l^\infty(\lambda)$ and the restriction of $\phi$ to this
subalgebra is singular. By minimality of $\kappa$, this implies that
$\phi$ must be zero on the embedded $l^\infty(\lambda)$, i.e.,
since $\sum p_\alpha$ is the unit of this algebra, $\phi(\sum p_\alpha) = 0$.
This is what we needed to show.
\end{proof}

\section{General von Neumann algebras}

With a little more work, parts (i) and (iii) of
Theorem \ref{maintheorem} can be generalized to
arbitrary von Neumann algebras. We need the following lemma, which is
probably well known.

\begin{lemma}\label{singular}
Let $M$ be a von Neumann algebra with a von Neumann subalgebra $N$ and
a normal conditional expectation $E: M \to N$. Then the image under
$E^{**}$ of the singular projection for $M$ is the singular projection
for $N$.
\end{lemma}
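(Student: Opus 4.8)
The plan is to work in the universal enveloping von Neumann algebras $M^{**}$ and $N^{**}$ and to exploit duality. Recall that the singular projection $s_M$ for $M$ is the central projection in $M^{**}$ for which the canonical normal $*$-homomorphism $\pi_M \colon M^{**} \to M$ has kernel $s_M M^{**}$; concretely, a functional $\omega \in M^*$ is normal if and only if its unique normal extension $\widetilde{\omega}$ to $M^{**}$ vanishes on $s_M M^{**}$, and singular if and only if $\widetilde{\omega}$ is supported on $s_M M^{**}$, so that $\widetilde{\omega}(s_M) = \omega(1)$ in that case. The identical description applies to $s_N \in N^{**}$. Since $E$ is normal, its bidual $E^{**} \colon M^{**} \to N^{**}$ is a normal conditional expectation whose pre-adjoint is $E^* \colon N^* \to M^*$, $E^*\omega = \omega \circ E$; in particular $\langle E^{**}(y), \omega\rangle = \langle y, \omega\circ E\rangle$ for all $y \in M^{**}$ and all $\omega \in N^* = (N^{**})_*$.

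The goal $E^{**}(s_M) = s_N$ is an identity of elements of $N^{**}$, so it suffices to check that the two sides pair identically against every $\omega \in (N^{**})_*$; by linearity I may take $\omega \geq 0$ and split it as $\omega = \omega_n + \omega_s$ into its normal and singular parts. Everything then reduces to two facts about how $E$ interacts with this splitting: (A) if $\omega$ is normal on $N$ then $\omega\circ E$ is normal on $M$, and (B) if $\omega$ is singular on $N$ then $\omega\circ E$ is singular on $M$. Granting these, $\omega_n\circ E$ and $\omega_s\circ E$ are respectively the normal and singular parts of $\omega\circ E$, so using the support description above, $\langle E^{**}(s_M),\omega\rangle = \widetilde{\omega\circ E}(s_M) = (\omega_s\circ E)(1_M) = \omega_s(1_N)$, while likewise $\langle s_N, \omega\rangle = \widetilde{\omega}(s_N) = \omega_s(1_N)$. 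This gives the desired equality.

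Fact (A) is immediate, since $E$ and $\omega$ are both weak* continuous. Fact (B) is the main obstacle, and the plan is to prove it as follows. Let $\psi \geq 0$ be singular on $N$, put $\tau = \psi\circ E$, and write $\tau = \tau_n + \tau_s$; I want $\tau_n = 0$. The normal functional $\tau_n$ satisfies $0 \leq \tau_n \leq \tau$, so its restriction to $N$ is a normal functional dominated by $\tau|_N = \psi\circ E|_N = \psi$, using $E|_N = \mathrm{id}$. Since $\psi$ is singular it dominates no nonzero normal functional, whence $\tau_n|_N = 0$. The remaining point is to upgrade $\tau_n|_N = 0$ to $\tau_n = 0$ on all of $M$, and here I would use that a conditional expectation satisfies $E(1_M) = 1_N$. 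Consequently $\tau(1_M - 1_N) = \psi\big(E(1_M) - E(1_N)\big) = 0$, so $\tau$, and hence the dominated functional $\tau_n$, is supported under $1_N$; combined with $\tau_n(1_N) = 0$ this forces $\tau_n(1_M) = 0$ and therefore $\tau_n = 0$.

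The only subtlety I anticipate is bookkeeping in the possibly non-unital case $1_N \neq 1_M$, but the support argument in the last step is precisely what handles it, since $E(1_M) = 1_N$ guarantees that $\psi\circ E$ lives under $1_N$ regardless. I would also record the elementary supporting facts used throughout — that the restriction of a normal functional to a von Neumann subalgebra is again normal, that singular functionals are exactly those dominating no nonzero normal functional, and the standard central decomposition of $M^{**}$ — citing a standard reference such as Takesaki.
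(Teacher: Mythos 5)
Your proof is correct, but it takes a genuinely different route from the paper's. You argue on the functional side: you test $E^{**}(z_M)$ and $z_N$ against positive $\omega \in N^*$, split $\omega = \omega_n + \omega_s$ into normal and singular parts, and reduce the identity to two transfer facts: (A) precomposition with the normal map $E$ preserves normality, and (B) it preserves singularity. Your proof of (B) --- dominate the normal part $\tau_n$ of $\tau = \psi\circ E$ by $\tau$, restrict to $N$ where $E$ is the identity, kill $\tau_n|_N$ using the characterization of singular positive functionals as those dominating no nonzero positive normal functional, then promote $\tau_n|_N = 0$ to $\tau_n = 0$ via $E(1_M) = 1_N$ --- is sound, and the non-unital bookkeeping is exactly right (Tomiyama gives $E(1_M)=1_N$ for any norm-one projection onto $N$), which matters since the paper later applies the lemma to the compression $x \mapsto pxp$ onto the corner $pMp$. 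The paper's own proof, by contrast, never decomposes functionals and never uses positivity: writing $i_M: M_* \to M^*$ and $i_N: N_* \to N^*$ for the canonical embeddings, it dualizes the intertwining identities $E^*i_N = i_M E_*$ and $i_M^* i^{**} = i\, i_N^*$ (with $i: N \to M$ the inclusion) to obtain the two kernel inclusions $E^{**}({\rm ker}\, i_M^*) \subseteq {\rm ker}\, i_N^*$ and ${\rm ker}\, i_N^* \subseteq E^{**}({\rm ker}\, i_M^*)$, whence $E^{**}(z_M) = z_N$ in a few lines. The two arguments are dual to each other: your fact (A) is precisely the predual shadow of the paper's first intertwining relation, and the standard fact you use implicitly --- that restrictions of normal functionals to $N$ are normal --- is the shadow of the second. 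What your route buys is that it isolates fact (B), that singularity is preserved under precomposition with a normal conditional expectation, as an explicit statement; this is exactly the form in which the lemma gets used later (Theorem \ref{vnalg1} and Proposition \ref{mcip} invoke it only to conclude that a composed or compressed singular state is still singular). What the paper's route buys is brevity and economy of hypotheses: it needs no positivity, no Jordan decomposition, and no appeal to the uniqueness of the normal/singular (Takesaki) decomposition, which your identification of $\omega_n\circ E$ and $\omega_s\circ E$ as \emph{the} normal and singular parts of $\omega\circ E$ quietly requires.
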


\begin{proof}
Let $z_M$ and $z_N$ be the two singular projections. If
$E_*: N_* \to M_*$ is the predual map  and $i_N: N_* \to N^*$ is the
canonical embedding, then $E^*i_N = i_ME_*$. Dualizing, we have
$Ei_M^* = i_N^*E^{**}$. It follows that $E^{**}({\rm ker}\, i_M^*)
\subseteq {\rm ker}\, i_N^*$. So $E^{**}(z_M) \leq z_N$. Letting
$i$ be the inclusion of $N$ into $M$, we also have $i_M^*i^{**} = ii_N^*$,
so that $i^{**}({\rm ker}\, i_N^*) \subseteq {\rm ker}\, i_M^*$. Applying
$E^{**}$ yields ${\rm ker}\, i_N^* \subseteq E^{**}({\rm ker}\, i_M^*)$, and
hence $z_N \leq E^{**}(z_M)$. So $E^{**}(z_M) = z_N$.
\end{proof}

We also need the fact that for any uncountable $\kappa$, the smallest cardinal
that supports a ${<}\,\kappa$-additive probability measure which vanishes on
singletons is real-valued measurable; this generalizes the standard fact that
the smallest Ulam real-valued measurable cardinal is real-valued measurable,
and is proven in the same way. Similarly, the smallest cardinal that supports
a ${<}\,\kappa$-additive $\{0,1\}$-valued measure which vanishes on
singletons is measurable.

\begin{theorem}\label{vnalg1}
Let $\kappa$ be an uncountable cardinal. Then a von Neumann algebra $M$
possesses a ${<}\,\kappa$-additive singular state if and only if $M$
is not ${<}\,\kappa'$-decomposable, where $\kappa'$ is the smallest
real-valued measurable cardinal $\geq \kappa$.
\end{theorem}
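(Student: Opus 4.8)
The plan is to prove both implications by reducing, in each direction, to the abelian case $l^\infty(\lambda)$ handled in Proposition \ref{measvsstate} together with the transfinite monotone convergence theorem of Lemma \ref{summing}. I would use ``not ${<}\,\kappa'$-decomposable'' in the concrete form that $M$ contains a family $\{p_\alpha : \alpha \in \kappa'\}$ of $\kappa'$ many mutually orthogonal nonzero projections, and I would use singularity through the standard characterization \cite{Tak} that a state $\phi$ on $M$ is singular if and only if every nonzero projection $e \in M$ dominates a nonzero projection $f \leq e$ with $\phi(f) = 0$.

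For the implication ($\Leftarrow$), suppose $M$ is not ${<}\,\kappa'$-decomposable and fix orthogonal nonzero projections $\{p_\alpha : \alpha \in \kappa'\}$. Since $\kappa'$ is real-valued measurable there is a ${<}\,\kappa'$-additive probability measure $\mu$ on $\kappa'$ vanishing on singletons. Choosing for each $\alpha$ a unit vector in the range of $p_\alpha$ and letting $\omega_\alpha$ be the corresponding normal vector state, I would define $\phi(x) = \int \omega_\alpha(x)\, d\mu(\alpha)$; this is a state because $x \mapsto (\omega_\alpha(x))_\alpha$ is a unital positive map into $l^\infty(\kappa')$ followed by the state on $l^\infty(\kappa')$ given by $\mu$. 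As each $\omega_\alpha$ is completely additive, ${<}\,\kappa$-additivity of $\phi$ reduces to interchanging a sum of fewer than $\kappa \leq \kappa'$ positive functions with the integral against $\mu$, which is exactly Lemma \ref{summing}. For singularity, $\omega_\alpha(p_\gamma) = \delta_{\alpha\gamma}$ shows that the restriction of $\phi$ to the copy $N$ of $l^\infty(\kappa')$ generated by the $p_\alpha$ is integration against $\mu$, hence a singular state with unit $e = \bigvee p_\alpha$ satisfying $\phi(e) = 1$. The normal part $\phi_n$ of $\phi$ then restricts on $N$ to a normal functional dominated by a singular state, hence vanishes there; in particular $\phi_n(e) = 0$, while $\phi(1-e) = 0$ forces $\phi_n(1-e) = 0$, so $\phi_n = 0$ and $\phi$ is singular.

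For the implication ($\Rightarrow$) I would argue the contrapositive: assuming $M$ is ${<}\,\kappa'$-decomposable, I show no singular ${<}\,\kappa$-additive state $\phi$ can exist. Using Zorn's lemma and the singularity characterization, choose a maximal family $\{q_\beta : \beta \in I\}$ of mutually orthogonal nonzero projections with $\phi(q_\beta) = 0$; maximality together with singularity forces $\bigvee q_\beta = 1$. By ${<}\,\kappa'$-decomposability, $\lambda := |I| < \kappa'$. The $q_\beta$ generate a unital copy of $l^\infty(\lambda)$ on which $\phi$ restricts to a ${<}\,\kappa$-additive state vanishing on singletons; translating via the correspondence of Proposition \ref{measvsstate}, this yields a ${<}\,\kappa$-additive probability measure on $\lambda$ vanishing on singletons. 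Any cardinal carrying such a measure is necessarily $\geq \kappa$, and by the fact recalled just before the theorem the least such cardinal $\kappa''$ is real-valued measurable; thus $\kappa'' \geq \kappa'$ by minimality of $\kappa'$, and since $\lambda \geq \kappa''$ we obtain $\lambda \geq \kappa'$, contradicting $\lambda < \kappa'$.

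The routine points are the verification that $\phi$ is a well-defined state and the bookkeeping of the abelian/measure correspondence for a general index set $\lambda$. The \textbf{main obstacle}, and the genuinely delicate step, is the handling of singularity in tandem with the two distinct cardinals $\kappa$ and $\kappa'$. In the ($\Leftarrow$) direction one must upgrade singularity of the restriction to the abelian subalgebra into singularity of $\phi$ on all of $M$, which is the normal-part argument above. In the ($\Rightarrow$) direction the whole force of the statement lies in converting the size bound $\lambda < \kappa'$ into a contradiction: here one \emph{cannot} simply invoke ${<}\,\kappa$-additivity, since $\lambda$ may exceed $\kappa$, and one must instead appeal to the least-cardinal fact recalled before the theorem, which identifies the smallest carrier of a ${<}\,\kappa$-additive measure vanishing on singletons as a real-valued measurable cardinal $\geq \kappa$.
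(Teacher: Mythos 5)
Your proof is correct, and in outline it is the paper's own: your forward direction (phrased as a contrapositive, but the same argument) uses the identical Zorn's-lemma family of null projections with join $1$, the induced measure $\mu(S)=\phi\bigl(\sum_{\beta\in S}q_\beta\bigr)$ on $\lambda$, and the same appeal to the fact stated just before the theorem that the least carrier of a ${<}\,\kappa$-additive measure vanishing on singletons is real-valued measurable; and your reverse direction builds the state by averaging a field of normal states against $\mu$ --- your vector states $\omega_\alpha$ are exactly the paper's $\psi_\alpha(p_\alpha\,\cdot\,p_\alpha)$ composed with the expectation onto $\bigoplus p_\alpha Mp_\alpha$, so your map $x\mapsto(\omega_\alpha(x))_\alpha$ is the paper's $\omega\circ E$. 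Where you genuinely diverge is the verification that the constructed state is singular. The paper passes to the bidual: Lemma \ref{singular} shows that $(\omega\circ E)^{**}$ carries the singular projection of $M$ to that of $l^\infty(\kappa')$, whence $\rho(z)=1$. You instead decompose $\phi=\phi_n+\phi_s$, observe that $\phi_n$ restricted to the copy $N$ of $l^\infty(\kappa')$ is a normal positive functional dominated by the singular state $\phi|_N$ (so it kills every atom $p_\alpha$, hence vanishes since $N$ is atomic), and dispose of the remainder with $\phi(1-e)=0$. This is more elementary --- it stays inside $M$ and never touches $M^{**}$ --- at the cost of exploiting the atomic abelian structure of the range algebra, whereas the paper's Lemma \ref{singular} holds for an arbitrary normal conditional expectation and is reused elsewhere (e.g.\ in Proposition \ref{mcip}). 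A second, smaller difference cuts in your favor: where the paper disposes of ${<}\,\kappa$-additivity of $\rho$ with the one-line remark that $\phi$ is ${<}\,\kappa$-additive and $\omega$, $E$ are normal, you invoke Lemma \ref{summing} explicitly; this is the more careful statement, since the images under $\omega\circ E$ of orthogonal projections in $M$ are merely positive elements of $l^\infty(\kappa')$, not projections, so the required interchange really is the transfinite monotone convergence theorem rather than additivity on projections.
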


\begin{proof}
Suppose there is a ${<}\,\kappa$-additive singular state $\phi$ on $M$.
By Zorn's lemma, we can find a family $\{p_\alpha: \alpha \in \lambda\}$
of mutually orthogonal projections in ${\rm ker}\,\phi$ whose sum is 1
\cite[p.\ 53]{AA}. Then
$$\mu(S) = \phi\left(\sum_{\alpha \in S} p_\alpha\right)$$
defines a ${<}\,\kappa$-additive measure on $\lambda$ which vanishes on
singletons. By the comment made before the theorem, the smallest cardinal
that supports a ${<}\,\kappa$-additive measure which vanishes on
singletons must be real-valued measurable, and is trivially $\geq \kappa$;
this shows that $\lambda \geq \kappa'$. So $M$ is not
${<}\,\kappa'$-decomposable.

For the reverse direction, suppose $M$ is not ${<}\,\kappa'$-decomposable.
Then there is a family
$\{p_\alpha: \alpha \in \kappa'\}$ of mutually orthogonal nonzero projections
in $M$ with sum $1$. Set $N = \bigoplus p_\alpha M p_\alpha$. For each
$\alpha$ let $\psi_\alpha$ be a normal state on $p_\alpha M p_\alpha$;
then applying these states to the summands of $N$ yields a normal map
$\omega: N \to l^\infty(\kappa')$. Let $E: M \to N$ be the canonical normal
conditional expectation, and by Proposition \ref{measvsstate} (iii) let
$\phi$ be a ${<}\,\kappa'$-additive (and hence ${<}\,\kappa$-additive)
singular state on $l^\infty(\kappa')$ given by integration against
a ${<}\,\kappa$-additive probability measure $\mu$ which vanishes on
singletons. We claim that $\rho = \phi\circ \omega\circ E$ is a
${<}\,\kappa$-additive singular state on $M$.

It is clear that $\rho$ is a state, and it is ${<}\,\kappa$-additive
because $\phi$ is ${<}\,\kappa$-additive and $\omega$ and $E$ are normal.
To see that $\rho$ is singular, identify $l^\infty(\kappa')$ with the von
Neumann algebra generated by the $p_\alpha$. Then $\Psi = \omega\circ E$ is
a normal conditional expectation from $M$ onto $l^\infty(\kappa')$. So if
$z$ is the singular projection for $M$ then
$$\rho(z) = \phi(\Psi^{**}(z)) = 1$$
since by Lemma \ref{singular} $\Psi^{**}(z)$ is the singular projection
for $l^\infty(\kappa')$. It follows that $\rho$ is singular.
\end{proof}

Using the fact that $B(l^2(\kappa))$ is not ${<}\,\kappa'$-decomposable
if and only if $\kappa' \leq \kappa$, it is not hard to deduce Theorem
\ref{maintheorem} (i) and (iii) from Theorem \ref{vnalg1}.

If there is no real-valued measurable cardinal $\geq \kappa$, then Theorem
\ref{vnalg1} becomes the assertion that no von Neumann algebra possesses a
${<}\,\kappa$-additive singular state. Similarly for Theorem \ref{vnalg2}
below.

By analogy with Theorem \ref{vnalg1}, one might expect that a von
Neumann algebra possesses a ${<}\,\kappa$-additive singular pure state
if and only if it is not ${<}\,\kappa'$-decomposable, where $\kappa'$
is the smallest measurable cardinal $\geq \kappa$. However, this is not
correct (unless there are no measurable cardinals). Indeed, for any
uncountable $\kappa$ the von Neumann algebra
$M = L^\infty[0,1]\otimes l^\infty(\kappa)$ is not ${<}\,\kappa$-decomposable,
yet it does not even possess a countably additive singular pure state.
That is because it contains $L^\infty[0,1]$ as a unital von Neumann
subalgebra, so any such state on $M$ would restrict to a state on
$L^\infty[0,1]$ with the same properties, which is impossible. (In the
abelian setting, the restriction of any pure state to a unital von Neumann
subalgebra is again pure.) We do not know whether there can be a countably
additive pure state on any nonatomic von Neumann algebra, so the next result
only goes in one direction.

\begin{theorem}\label{vnalg2}
Let $\kappa$ be an uncountable cardinal. If a von Neumann algebra $M$
possesses a ${<}\,\kappa$-additive singular pure state then it
contains a family of $\kappa'$ many mutually orthogonal minimal projections,
where $\kappa'$ is the smallest measurable cardinal $\geq \kappa$.
\end{theorem}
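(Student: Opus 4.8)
The plan is to extract the required minimal projections from $\phi$ directly, and then pin down their number by the same minimality mechanism used in the proof of Theorem \ref{ulammeas}. First I would observe that, because $\phi$ is singular, it annihilates every minimal projection of $M$: given a minimal $e$, the standard characterization of singularity provides a nonzero subprojection of $e$ on which $\phi$ vanishes, and $e$ is the only candidate. By Zorn's lemma I would then choose a maximal orthogonal family $\{e_\alpha : \alpha \in I\}$ of minimal projections of $M$; all of these lie in $\ker\phi$, and their supremum $z = \sum e_\alpha$ is the central projection onto the atomic part of $M$. Since $z$ is central and $\phi$ is pure, $z$ lies in the multiplicative domain of $\phi$ (cf.\ the proof of Lemma \ref{vnpu}), so $\phi(z) \in \{0,1\}$. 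The family $\{e_\alpha\}$ is the candidate for the conclusion, and everything now reduces to showing that it is large.

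Suppose for the moment that $\phi(z) = 1$. Then the $e_\alpha$ generate a (nonunital) copy of $l^\infty(I)$ inside $M$, and $\phi$ restricts to it as a nonzero, singular, ${<}\,\kappa$-additive state, while $\phi$ is a countably additive pure state on the ambient $M$. This is precisely the configuration treated in Theorem \ref{ulammeas}. Running the minimality argument of that proof, adapted so that the embedded restriction is required to be ${<}\,\kappa$-additive (which forces the index cardinal to be $\geq \kappa$), and combining it with Lemma \ref{bigger2} --- which makes the index set have cardinality $> 2^{\aleph_0}$ --- and Proposition \ref{measvsstate}(iii), I would conclude that the smallest cardinal admitting such a configuration is real-valued measurable and $> 2^{\aleph_0}$, hence measurable, and therefore equals $\kappa'$, the smallest measurable cardinal $\geq \kappa$. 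Since $(M,\phi)$ together with the family $\{e_\alpha\}$ realizes this configuration with index set $I$, minimality yields $|I| \geq \kappa'$, which is the desired family of $\kappa'$ many mutually orthogonal minimal projections.

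The main obstacle is the assumption $\phi(z) = 1$, equivalently that a ${<}\,\kappa$-additive singular pure state cannot be supported on the nonatomic summand $z^\perp M$. I would attack this through the excision machinery of Lemmas \ref{isolate} and \ref{excision} together with Theorem \ref{weakstar}: assuming $\phi$ lived on $z^\perp M$, I would try to produce inside $\mathcal{F}_\phi \cap z^\perp M$ a transfinite decreasing chain of projections whose successive differences form an orthogonal family in $\ker\phi$, and then play the nonatomicity of $z^\perp M$ against ${<}\,\kappa$-additivity in the spirit of the binary-tree argument of Lemma \ref{bigger}. I expect this to be the genuinely hard part, since the corresponding question for general (non--countably-generated) nonatomic algebras is delicate; I would note, however, that when $M$ is already atomic --- in particular when $M = B(H)$, which is all that Theorem \ref{maintheorem} requires --- one has $z = 1$ and the difficulty disappears, so the argument of the preceding paragraph is complete. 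Finally, I would record that every measure produced along the way is ${<}\,\kappa$-additive and vanishes on singletons, so the cardinal $\kappa'$ obtained is measurable by the remark preceding Theorem \ref{vnalg1}.
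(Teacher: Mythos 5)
Your proposal hinges on the claim that $\phi(z)=1$, i.e.\ that a ${<}\,\kappa$-additive singular pure state cannot be supported on the nonatomic summand $z^\perp M z^\perp$. You correctly flag this as the hard part, but what you offer there is only a plan of attack (a transfinite chain in $\mathcal{F}_\phi$ plus a binary-tree argument in the spirit of Lemma \ref{bigger}), not a proof --- and this is not a gap that more care can patch. Ruling out such states is precisely the question the paper declares open in the paragraph immediately preceding the theorem: ``We do not know whether there can be a countably additive pure state on any nonatomic von Neumann algebra.'' Note that if $\phi(z)=0$ then the restriction of $\phi$ to the corner $z^\perp M z^\perp$ is again a ${<}\,\kappa$-additive singular pure state, and that corner is nonatomic; since a nonatomic von Neumann algebra has no normal pure states, every pure state on it is singular, so for $\kappa=\aleph_1$ (where ${<}\,\kappa$-additive just means countably additive) your missing step \emph{is} that open problem. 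In other words, your proposal reduces the theorem to an unsolved question rather than proving it.

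The paper's own proof never goes near minimal projections or the atomic/nonatomic decomposition, and that is exactly how it sidesteps your obstruction. By Zorn's lemma (citing \cite[p.~53]{AA}) it produces a maximal family $\{p_\alpha : \alpha \in \lambda\}$ of mutually orthogonal projections in ${\rm ker}\,\phi$ with sum $1$ --- such a family exists in any $M$, atomic or not --- so that $\phi$ restricted to the copy of $l^\infty(\lambda)$ they generate is nonzero, singular, and ${<}\,\kappa$-additive. It then takes $\lambda$ minimal with this property (minimality localized to the fixed pair $(M,\phi)$, which is the correct way to run your adaptation: since $\phi$ is ${<}\,\kappa$-additive globally, all restrictions are automatically ${<}\,\kappa$-additive, whereas building ${<}\,\kappa$-additivity into the defining property of a \emph{global} configuration, as you suggest, breaks the minimality step because restrictions of a merely countably additive ambient state need not be ${<}\,\kappa$-additive). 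The argument of Theorem \ref{ulammeas} then gives that $\phi$ is ${<}\,\lambda$-additive, so $\lambda$ is real-valued measurable by Proposition \ref{measvsstate}(iii), exceeds $2^{\aleph_0}$ by Lemma \ref{bigger2}, hence is measurable, and clearly $\lambda \geq \kappa$; so $\lambda \geq \kappa'$. Your $\phi(z)=1$ branch is in substance this same argument run on the family $\{e_\alpha\}$, and it is fine. But note the trade-off: the paper's route yields only mutually orthogonal \emph{nonzero} projections --- its proof literally ends by concluding that $M$ is not ${<}\,\kappa'$-decomposable, which is weaker than the ``minimal projections'' wording of the statement --- whereas your route targets the literal statement, and that is exactly why it collides with the open problem and cannot currently be completed.
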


\begin{proof}
Let $\phi$ be a ${<}\,\kappa$-additive singular pure state on $M$.
As in the proof of Theorem \ref{vnalg1}, we can find a family
$\{p_\alpha: \alpha \in \lambda\}$ of mutually orthogonal projections
such that the restriction of $\phi$ to the von Neumann algebra they generate,
which is isomorphic to $l^\infty(\lambda)$, is nonzero, ${<}\,\kappa$-additive,
and singular. Suppose $\lambda$ is the smallest cardinal with this property,
i.e., the restriction of $\phi$ to the von Neumann algebra generated by any
family of fewer than $\lambda$ mutually orthogonal projections is zero or
nonsingular. By Lemma \ref{bigger2}, we have $\lambda > 2^{\aleph_0}$,
and as in the proof of Theorem \ref{ulammeas}, $\phi$ must be
${<}\,\lambda$-additive on $M$. Thus the restriction of $\phi$ to the
embedded $l^\infty(\lambda)$ is ${<}\,\lambda$-additive, so $\lambda$ must
be real-valued measurable, and since it exceeds $2^{\aleph_0}$, measurable.
Also, it is clear that $\lambda \geq \kappa$. So $\lambda \geq \kappa'$ and
this shows that $M$ is not ${<}\,\kappa'$-decomposable.
\end{proof}

Although this result only gives one implication, it is easily seen to
imply the hard directions of Theorem \ref{maintheorem} (ii) and (iv),
in the same way that Theorem \ref{vnalg2} implied Theorem \ref{maintheorem}
(i) and (iii).

Theorems \ref{vnalg1} and \ref{vnalg2} also have the following consequence.

\begin{corollary}
Let $M$ be a von Neumann algebra.
\begin{itemize}
\item[(i)] If $M$ is ${<}\,\kappa$-decomposable, where $\kappa$ is the
first real-valued measurable cardinal (or if there are no real-valued
measurable cardinals), then every countably additive state on $M$ is normal.
\item[(ii)] If $M$ is ${<}\,\kappa'$-decomposable, where $\kappa'$ is the
first measurable cardinal (or if there are no measurable cardinals), then
every countably additive pure state on $M$ is normal.
\end{itemize}
In particular, this holds for sequentially weak* continuous states.
\end{corollary}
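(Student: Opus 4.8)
The plan is to reduce both parts to Theorems \ref{vnalg1} and \ref{vnalg2}, using two observations. First, ``countably additive'' is exactly ``${<}\,\aleph_1$-additive'' in the sense of Definition \ref{kappaadd}, so these theorems apply with the parameter $\aleph_1$. Second, $\aleph_1$ is neither real-valued measurable nor measurable (any such cardinal is weakly inaccessible, hence far larger than $\aleph_1$), so the cardinal ``$\kappa'$'' produced by those theorems, namely the smallest (real-valued) measurable cardinal ${\geq}\,\aleph_1$, is simply the first (real-valued) measurable cardinal. This matches the $\kappa$ and $\kappa'$ appearing in the present statement.

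For part (i), I would take a countably additive state $\phi$ on $M$ and apply the normal--singular decomposition $\phi = \phi_n + \phi_s$, where $\phi_n$ is normal, $\phi_s$ is singular, and both are positive with $\|\phi_n\| + \|\phi_s\| = 1$. Since $\phi_n$ is normal it is completely additive, hence countably additive, so $\phi_s = \phi - \phi_n$ is countably additive as well. If $\phi_s \neq 0$, then normalizing it produces a countably additive (i.e.\ ${<}\,\aleph_1$-additive) singular state on $M$; by Theorem \ref{vnalg1} this forces $M$ to fail to be ${<}\,\kappa$-decomposable, where $\kappa$ is the first real-valued measurable cardinal, contradicting the hypothesis. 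Hence $\phi_s = 0$ and $\phi = \phi_n$ is normal.

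For part (ii) the extra ingredient is the standard dichotomy that a pure state is either normal or singular: in the decomposition $\phi = \phi_n + \phi_s$, extremality of $\phi$ forces one summand to vanish, since otherwise $\phi$ would equal both the normal state $\phi_n/\|\phi_n\|$ and the singular state $\phi_s/\|\phi_s\|$, and no nonzero functional is simultaneously normal and singular. Thus a countably additive pure state $\phi$ that is not normal must be a countably additive singular pure state, and Theorem \ref{vnalg2} (applied with parameter $\aleph_1$) then says $M$ contains $\kappa'$ many mutually orthogonal minimal projections, where $\kappa'$ is the first measurable cardinal. This makes $M$ not ${<}\,\kappa'$-decomposable, contradicting the hypothesis, so $\phi$ is normal.

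The remaining cases and the closing sentence are routine. If there is no real-valued measurable (resp.\ measurable) cardinal, then by the remark following Theorem \ref{vnalg1} the relevant theorem degenerates to the assertion that no von Neumann algebra carries a countably additive singular (resp.\ singular pure) state, so the contradictions above hold unconditionally. For the final clause I would observe that any sequentially weak* continuous state is countably additive: the partial sums of a countable orthogonal family $\{p_n\}$ increase $\sigma$-weakly to $\sum p_n$, so sequential weak* continuity yields $\phi(\sum p_n) = \sum \phi(p_n)$; hence (i) and (ii) apply verbatim to sequentially weak* continuous (pure) states. The only points needing genuine care are that the singular part of a countably additive state remains countably additive and the pure-state dichotomy, both of which are standard — the substantive work has already been carried out in Theorems \ref{vnalg1} and \ref{vnalg2}.
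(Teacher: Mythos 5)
Your proposal is correct and follows essentially the same route as the paper: decompose the state into normal and singular parts, observe that the singular part inherits countable additivity, and derive a contradiction with Theorem \ref{vnalg1} (resp.\ with Theorem \ref{vnalg2}, via the normal-or-singular dichotomy for pure states). Your added details---the explicit reduction of ``countably additive'' to ${<}\,\aleph_1$-additive, the extremality argument for the dichotomy, and the verification that sequential weak* continuity implies countable additivity---are all sound and merely make explicit what the paper leaves implicit.
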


\begin{proof}
Assume $M$ is ${<}\,\kappa$-decomposable and let $\phi$ be a countably additive
state on $M$. If $\phi$ is not normal then we can write
$\phi = a\phi_n + b\phi_s$ where $a,b \geq 0$,
$a + b = 1$, $\phi_n$ is a normal state and $\phi_s$ is a singular state.
Since any normal state is countably additive, it follows that $\phi_s$ is
also countably additive. Thus $M$ has a countably additive singular state,
contradicting Theorem \ref{vnalg1}. We conclude that $\phi$ must have
been normal. This proves part (i); part (ii) similarly follows from
Theorem \ref{vnalg2}, using the fact that any pure state is either
normal or singular.
\end{proof}

The implication ``sequentially weak* continuous $\Rightarrow$ normal''
contained in part (i) was shown in \cite[Theorem 2.12]{Neufang}. (Note
that in that paper, ``measurable'' means ``real-valued measurable''.)

\section{Regularity}

We return to the subject of regularity introduced in Section 4. Let us
start with an alternative characterization which will be useful later.
It is an easy consequence of similar results in \cite{Ham}.

\begin{lemma}\label{altregular}
Let $\phi$ be a state on a von Neumann algebra $M$. Then $\phi$ is
regular if and only if the positive part of its kernel is weak* sequentially
closed.
\end{lemma}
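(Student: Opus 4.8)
The plan is to prove both implications by relating regularity to the behavior of $\phi$ on increasing sequences of positive elements, using the definition of regularity in terms of suprema of projections. Recall that $\phi$ is regular means that whenever $\{q_n\}$ is a countable set of projections with $\phi(q_n) = 0$ for all $n$, we have $\phi(\bigvee q_n) = 0$. I want to show this is equivalent to the assertion that $\{x \geq 0 : \phi(x) = 0\}$ is closed under weak* limits of sequences.

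For the direction that regularity implies sequential weak* closedness of the positive part of the kernel, I would start with a sequence $(x_n)$ of positive elements in $\ker\phi$ converging weak* to some $x \geq 0$, and aim to show $\phi(x) = 0$. The natural move is to pass from the $x_n$ to associated projections: for each $n$ and each threshold $\epsilon > 0$, the spectral projection $q_{n,\epsilon} = \chi_{[\epsilon,\infty)}(x_n)$ satisfies $\epsilon\, q_{n,\epsilon} \leq x_n$, so $\phi(q_{n,\epsilon}) \leq \tfrac{1}{\epsilon}\phi(x_n) = 0$. The hope is to control $x$ by a supremum of such projections. The technical difficulty is that weak* convergence of $(x_n)$ does not by itself give any monotone structure, so I would first reduce to a monotone situation — for instance by replacing $x_n$ with suitable partial suprema or by using that $\phi(x) = 0$ can be tested against the projections $\chi_{[\delta,\infty)}(x)$ and relating these to the $q_{n,\epsilon}$ via the weak* limit. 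This reduction is where I expect the argument to lean most heavily on the material in \cite{Ham}.

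For the converse, assume the positive part of $\ker\phi$ is weak* sequentially closed, and let $\{q_n\}$ be projections with $\phi(q_n) = 0$. Set $Q = \bigvee q_n$ and consider the increasing sequence of projections $r_n = q_1 \vee \cdots \vee q_n$, which converges weak* (indeed SOT, hence weak*) upward to $Q$. The obstacle is that each $r_n$ need not lie in $\ker\phi$ even though the individual $q_j$ do, since $\phi$ is only finitely additive and the $q_j$ need not be orthogonal; a finite join of null projections need not be null for a general state. I would handle this by first orthogonalizing: replace $\{q_n\}$ by the mutually orthogonal projections obtained via the usual telescoping $q_n' = r_n - r_{n-1}$, which satisfy $\bigvee q_n' = Q$ and $r_n = \sum_{j \leq n} q_j'$. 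The point is then to argue that each $r_n \in \ker\phi$; here I would use that regularity (or rather the hypothesis we are trying to deduce it from) combined with the telescoping lets us conclude $\phi(r_n) = 0$ inductively, after which the weak* sequential closedness of the positive kernel applied to the sequence $(r_n)$ gives $\phi(Q) = 0$.

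The main obstacle, and the step I would flag as delicate, is bridging between projections and general positive elements in the right direction: regularity is phrased purely in terms of projections and their suprema, whereas the kernel condition concerns arbitrary positive operators and weak* sequential limits. The key technical tool is the passage through spectral projections $\chi_{[\epsilon,\infty)}(x)$, which lets one approximate a positive element from below by scalar multiples of projections and recover $\phi(x)$ from the values $\phi$ takes on those projections. I would expect the cleanest route is to cite or adapt the relevant characterizations from \cite{Ham} — the lemma statement itself advertises that it is ``an easy consequence of similar results in \cite{Ham}'' — so the proof should amount to identifying which result there yields the spectral-projection comparison and then assembling the two implications as sketched above.
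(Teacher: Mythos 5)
Your proposal has the right skeleton --- spectral projections bridging positive elements and projections, and the increasing joins $r_n = q_1 \vee \cdots \vee q_n$ --- but in both directions it stops exactly where the real work happens, and in the backward direction the step you propose is circular. In the forward direction you correctly produce null spectral projections $q_{n,k} = \chi_{[1/k,\infty)}(x_n)$ (each is dominated by $k\,x_n$, hence null), but then you say you must ``reduce to a monotone situation'' and leave that unresolved. No monotone reduction is needed, and none is available; the missing observation is of a different kind: let $p = \bigvee_{n,k} q_{n,k} = \bigvee_n s(x_n)$, where $s(x_n)$ is the support projection of $x_n$. This is a single countable family of null projections, so regularity gives $\phi(p) = 0$. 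Since $s(x_n) \leq p$ we have $p x_n p = x_n$ for every $n$, and compression by $p$ is weak* continuous, so in the limit $pxp = x$; hence $\phi(x) = \phi(pxp) \leq \|x\|\,\phi(p) = 0$. This is precisely what the paper's appeal to \cite[Proposition 10.1.5 (iv)]{Ham} plus regularity accomplishes: each $s(x_n)$ is null, the join of the supports is null, and that join dominates $s(x)$.

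In the backward direction you correctly identify the obstacle --- for a general state a finite join of null projections need not be null --- but your proposed fix does not address it: after telescoping, to conclude anything about $\phi(q_n')$ or to run your induction you need precisely $\phi(r_n) = \phi(q_1 \vee \cdots \vee q_n) = 0$, which is the finite-join property you are trying to establish, so orthogonalization buys nothing. The gap can instead be closed by the same spectral trick as above, now powered by your hypothesis rather than by regularity: $q_1 \vee \cdots \vee q_n$ is the support projection of $y_n = q_1 + \cdots + q_n$, the projections $\chi_{[1/k,\infty)}(y_n) \leq k\, y_n$ are null and increase weak* to that support as $k \to \infty$, so weak* sequential closedness of the positive kernel gives $\phi(r_n) = 0$; one further application of the hypothesis to the increasing sequence $r_n \nearrow \bigvee q_n$ gives $\phi(\bigvee q_n) = 0$. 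The paper sidesteps writing this out by quoting \cite[Proposition 10.1.5 (iii)]{Ham}, which states exactly that nullity of weak* limits of increasing sequences of null projections implies regularity; a self-contained proof must supply the spectral argument, not the induction you sketched.
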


\begin{proof}
Suppose $\phi$ is regular and let $(x_n)$ be a sequence of positive elements
in ${\rm ker}\, \phi$ which converges weak* to $x \in M$. It follows from
\cite[Proposition 10.1.5 (iv)]{Ham} that the support projection of each $x_n$
lies in ${\rm ker}\, \phi$, so by regularity the join of these support
projections also belongs to ${\rm ker}\,\phi$. But this join contains the
support projection of $x$, so that $\phi(x)$ must be zero. This proves the
forward direction.

Conversely, suppose the positive part of ${\rm ker}\, \phi$ is weak*
sequentially closed. If $(p_n)$ is any sequence of projections in
${\rm ker}\, \phi$ which increases to a projection $p \in M$, then it
immediately follows that $p \in {\rm ker}\, \phi$. According to
\cite[Proposition 10.1.5 (iii)]{Ham}, this implies that $\phi$ is regular.
\end{proof}

A regular state may be seen as the quantum analog of a finitely additive
measure with the property that the union of any countable family of null
sets is null. Accordingly, let us say that a finitely additive measure is
{\em regular} if the union of any countable family of null sets is null.
The methods of Sections 2 and 3 easily yield the following equivalence.

\begin{theorem}\label{regular}
Let $\kappa$ be an uncountable cardinal. Then the following are equivalent:
\begin{itemize}
\item[(i)] there is a finitely additive regular probability measure on
$\kappa$ which vanishes on singletons
\item[(ii)] there is a regular singular state on $l^\infty(\kappa)$
\item[(iii)] there is a regular singular state on $B(l^2(\kappa))$.
\end{itemize}
\end{theorem}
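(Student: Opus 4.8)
The plan is to follow the template already set by Proposition \ref{measvsstate} and Theorems \ref{ulamrvmeas}, \ref{rvmeas}: establish (i) $\Leftrightarrow$ (ii) through the measure--state correspondence, and (ii) $\Leftrightarrow$ (iii) through the diagonal conditional expectation. The only genuinely new ingredient is that regularity must be tracked and shown to persist at each step, and for that I expect to rely on the reformulation in Lemma \ref{altregular}.

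For (i) $\Leftrightarrow$ (ii) I would invoke the bijection from Proposition \ref{measvsstate}(i) between finitely additive probability measures $\mu$ on $\kappa$ and states $\phi$ on $l^\infty(\kappa)$, under which $\phi(\chi_S) = \mu(S)$ and $\phi$ is singular precisely when $\mu$ vanishes on singletons. The point to add is that regularity matches up automatically: every projection in $l^\infty(\kappa)$ is $\chi_S$ for some $S \subseteq \kappa$, and the join of a countable family $\chi_{S_n}$ is $\chi_{\bigcup S_n}$. Hence the implication $\phi(\chi_{S_n}) = 0$ for all $n$ $\Rightarrow$ $\phi(\bigvee \chi_{S_n}) = 0$ is literally the statement that a countable union of $\mu$-null sets is $\mu$-null. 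So regular measures correspond to regular states and the equivalence is immediate.

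For (iii) $\Rightarrow$ (ii) I would restrict a regular singular state $\phi$ on $B(l^2(\kappa))$ to the diagonal subalgebra, identified with $l^\infty(\kappa)$. Singularity descends because finitely supported diagonal functions are finite-rank projections, hence compact. Regularity descends because the join of a countable family of diagonal projections, being the projection onto the closed span of the corresponding coordinate subspaces, is computed identically in $l^\infty(\kappa)$ and in $B(l^2(\kappa))$; so the regularity of $\phi$ on the ambient algebra yields regularity of the restriction with no further work.

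For (ii) $\Rightarrow$ (iii), which I expect to be the main obstacle, I would take a regular singular state $\phi$ on $l^\infty(\kappa)$ and set $\psi = \phi \circ E$, where $E\colon B(l^2(\kappa)) \to l^\infty(\kappa)$ is the normal diagonal conditional expectation from the proof of Theorem \ref{ulamrvmeas}. That $\psi$ is a state is clear. For singularity, $E$ carries the compacts into $c_0(\kappa)$, and a singular state on $l^\infty(\kappa)$ automatically annihilates $c_0(\kappa)$: for positive $f \in c_0(\kappa)$ the set $\{f \geq 1/n\}$ is finite, so $f \leq \tfrac{1}{n}\mathbf{1} + \|f\|_\infty \chi_{\{f \geq 1/n\}}$ forces $\phi(f) \leq 1/n$ for every $n$. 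The delicate point is regularity of $\psi$, where I would use Lemma \ref{altregular}: if $(x_n)$ is a sequence of positive elements of $\ker \psi$ converging weak* to $x$, then $E(x_n) \geq 0$, $\phi(E(x_n)) = 0$, and by normality $E(x_n) \to E(x)$ weak*; since $\phi$ is regular, the positive part of $\ker \phi$ is weak* sequentially closed by Lemma \ref{altregular}, whence $E(x) \in \ker \phi$ and so $\psi(x) = 0$. Applying Lemma \ref{altregular} once more shows $\psi$ is regular, which closes the argument.
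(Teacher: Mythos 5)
Your proof is correct, and its overall skeleton matches the paper's: both establish (i) $\Leftrightarrow$ (ii) through the measure--state correspondence of Proposition \ref{measvsstate}, both obtain (iii) $\Rightarrow$ (ii) by restricting to the diagonal, and in the hard direction both end up constructing the very same state $x \mapsto \int E(x)\, d\mu = \phi(E(x))$ on $B(l^2(\kappa))$. Where you genuinely diverge is in how regularity of that state is verified. The paper proves (i) $\Rightarrow$ (iii) and works concretely with the measure: given projections $q_n$ in the kernel, each set $\{E(q_n) \geq \frac{1}{k}\}$ is $\mu$-null, so each $E(q_n)$ has $\mu$-null support; the support of $E(\bigvee q_n)$ is contained in the union of these supports (a point left implicit in the paper: if $e_\alpha \perp {\rm ran}\, q_n$ for all $n$ then $e_\alpha \perp {\rm ran}\, \bigvee q_n$), so regularity of $\mu$ makes it null, and simple-function approximation then gives $\phi(E(\bigvee q_n)) = 0$. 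You instead prove (ii) $\Rightarrow$ (iii) and never touch the measure in that step: you route everything through Lemma \ref{altregular}, using that the normal map $E$ is weak*--weak* sequentially continuous, so that weak* sequential closedness of the positive kernel pulls back under composition with $E$. Your route is shorter and more general --- it in fact shows that $\phi \circ \Psi$ is regular whenever $\phi$ is regular and $\Psi$ is any normal positive unital map, a fact that would equally have covered the regularity verification the paper repeats by hand in the proof of Theorem \ref{ueda}, (iii) $\Rightarrow$ (ii). What the paper's version buys is self-containedness: it uses only the definition of regularity, whereas your argument leans on both directions of Lemma \ref{altregular}, whose converse half rests on the cited result \cite[Proposition 10.1.5 (iii)]{Ham}. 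Since Lemma \ref{altregular} is established in the paper immediately before this theorem, there is no circularity, and your proof stands as written.
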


\begin{proof}
The proof of the equivalence (i) $\Leftrightarrow$ (ii) is just like the
proof of Proposition \ref{measvsstate} (i). The implication (iii) $\Rightarrow$
(ii) follows by restricting to the diagonal subalgebra, just as in the
reverse directions of Theorems \ref{ulamrvmeas} and \ref{rvmeas}. For the
implication (i) $\Rightarrow$ (iii), suppose $\mu$ is a finitely additive
regular probability measure on $\kappa$ which vanishes on singletons.
As in the proof of the forward
direction of Theorem \ref{ulamrvmeas}, let $E$ be the conditional
expectation from $B(l^2(\kappa))$ onto $l^\infty(\kappa)$ and define
$\phi: x \mapsto \int E(x)\, d\mu$. As in Theorem \ref{ulamrvmeas},
$\phi$ is a singular state on $B(l^2(\kappa))$. To see that it is regular,
let $\{q_n\}$ be a countable family of projections in its kernel; then for
each $n$ and $k$ the set where $E(q_n) \geq \frac{1}{k}$ is $\mu$-null,
and taking the union over $k$ yields that the support of each $E(q_n)$
in $\kappa$ is $\mu$-null. The support of $E(\bigvee q_n)$ is then a
countable union of null sets and therefore also null. It can therefore be
uniformly approximated by simple function which are supported on the same
null set, and whose integral is therefore zero. This shows that
$\phi(\bigvee q_n) = 0$.
\end{proof}

According to Theorem \ref{pureregular}, every countably additive state
is regular, and the converse is valid for pure states. It was shown in
\cite{KK} that it is consistent that the converse fails for general
states, even in the atomic abelian case. Indeed, assuming ZFC +
``a measurable cardinal exists'' is consistent, \cite{KK} shows that
there is a model in which there is no real-valued measurable cardinal
$\leq 2^{\aleph_0}$, and hence no Ulam real-valued measurable cardinal
in that range, but there is a regular measure on $\kappa = 2^{\aleph_0}$.
Together with Theorems \ref{regular} and \ref{ulamrvmeas} and
Proposition \ref{measvsstate} (i), this yields the following result.

\begin{theorem}\label{kkthm}
If it is consistent that a measurable cardinal exists, then it is consistent
that both $l^\infty(\kappa)$ and $B(l^2(\kappa))$ have regular singular
states, but neither algebra admits a countably additive singular state,
for $\kappa = 2^{\aleph_0}$.
\end{theorem}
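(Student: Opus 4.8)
The plan is to read the statement off, inside a single fixed set-theoretic model, from the characterizations already in hand. First I would invoke the consistency result of \cite{KK}: granting that ZFC ${}+{}$ ``a measurable cardinal exists'' is consistent, there is a model of ZFC in which there is no real-valued measurable cardinal $\leq 2^{\aleph_0}$ (hence, by the equivalence recalled in Section 1, no Ulam real-valued measurable cardinal in that range), yet there does exist a finitely additive regular probability measure on $\kappa = 2^{\aleph_0}$ which vanishes on singletons. All the remaining work takes place inside this one model, so the final conclusion will be a consistency statement relative to the consistency of a measurable cardinal.

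Next I would produce the regular singular states. The regular measure on $\kappa = 2^{\aleph_0}$ furnished by \cite{KK} is exactly condition (i) of Theorem \ref{regular}, so conditions (ii) and (iii) of that theorem hold as well, yielding a regular singular state on $l^\infty(\kappa)$ and a regular singular state on $B(l^2(\kappa))$. I would then rule out the countably additive singular states using the opposite end of the hypothesis: since the model contains no real-valued measurable cardinal $\leq 2^{\aleph_0}$, the cardinal $\kappa = 2^{\aleph_0}$ is not $\geq$ any real-valued measurable cardinal and so is not Ulam real-valued measurable. By Proposition \ref{measvsstate} (i) there is then no singular countably additive state on $l^\infty(\kappa)$, and by Theorem \ref{ulamrvmeas} there is none on $B(l^2(\kappa))$. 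Assembling the two halves gives precisely the asserted configuration inside the model, and as the model is consistent relative to the consistency of a measurable cardinal, the theorem follows.

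The only genuinely hard input is the set-theoretic consistency result of \cite{KK}, which manufactures a regular measure on the continuum while keeping every (Ulam) real-valued measurable cardinal strictly above $2^{\aleph_0}$; this is quoted rather than reproved. Once that model is granted, there is no further obstacle: the argument is purely a matter of matching the regular measure to condition (i) of Theorem \ref{regular} and the absence of an Ulam real-valued measurable cardinal below the continuum to the failure of the hypotheses of Theorem \ref{ulamrvmeas} and Proposition \ref{measvsstate} (i).
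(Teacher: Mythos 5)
Your proposal is correct and is essentially identical to the paper's own argument: the paper likewise quotes the model of \cite{KK} (no real-valued measurable cardinal $\leq 2^{\aleph_0}$, hence no Ulam real-valued measurable cardinal in that range, yet a regular measure on $2^{\aleph_0}$) and then combines Theorem \ref{regular}, Theorem \ref{ulamrvmeas}, and Proposition \ref{measvsstate} (i), exactly as you do. Nothing is missing.
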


On the other hand, it is worth noting that this conclusion is not consistent
with the continuum hypothesis. It follows from \cite[Lemma 6.2]{AA} that if the
continuum hypothesis holds and $\kappa$ is smaller than the first measurable
cardinal then there is no finitely additive regular probability measure on
$\kappa$ which vanishes on singletons. Thus, using Theorem \ref{regular} we
can draw the following conclusion.

\begin{theorem}\label{chthm}
Assuming the continuum hypothesis, $l^\infty(\kappa)$ and $B(l^2(\kappa))$
have regular singular states if and only if $\kappa$ is Ulam measurable.
\end{theorem}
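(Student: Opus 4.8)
The plan is to reduce the statement, via Theorem \ref{regular}, to a single assertion about measures. Theorem \ref{regular} already shows that $l^\infty(\kappa)$ has a regular singular state if and only if $B(l^2(\kappa))$ does, and that either is equivalent to the existence of a finitely additive regular probability measure on $\kappa$ which vanishes on singletons (condition (i) there). So the two algebra conditions may be treated together, and it suffices to prove that, under the continuum hypothesis, such a measure exists precisely when $\kappa$ is Ulam measurable. Both implications then reduce to assembling results already established.

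For the direction that Ulam measurability yields regular singular states I would not use the continuum hypothesis at all. If $\kappa$ is Ulam measurable then by definition there is a nonzero countably additive $\{0,1\}$-valued measure on $\kappa$ which vanishes on singletons, equivalently, by Proposition \ref{measvsstate} (ii), a singular countably additive pure state on $l^\infty(\kappa)$. By Theorem \ref{pureregular} every countably additive state is regular, so this state is a regular singular state on $l^\infty(\kappa)$, and Theorem \ref{regular} then supplies one on $B(l^2(\kappa))$ as well. (At the level of measures the same point is transparent: a countably additive measure is automatically regular in the finitely additive sense, since a countable union of null sets has measure at most the sum of their measures.)

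For the converse I would bring in the continuum hypothesis through the cited fact preceding the theorem. Suppose one of the two algebras carries a regular singular state. Then by Theorem \ref{regular} (i) there is a finitely additive regular probability measure on $\kappa$ vanishing on singletons. By \cite[Lemma 6.2]{AA}, under the continuum hypothesis no such measure exists when $\kappa$ is smaller than the first measurable cardinal; taking the contrapositive, $\kappa$ must be $\geq$ the first measurable cardinal, which is exactly the statement that $\kappa$ is Ulam measurable.

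This is the only place where the continuum hypothesis is genuinely used, and it is the step that rests on external set-theoretic input rather than on the machinery developed in the paper. Accordingly, the point to get right is simply the direction of the implication in \cite[Lemma 6.2]{AA}: it forbids regular vanishing-on-singletons measures strictly below the first measurable cardinal under CH, and it is precisely the contrapositive of that statement, combined with the earlier observation that a cardinal is Ulam measurable if and only if it is $\geq$ some measurable cardinal, which delivers the nontrivial half of the equivalence. There is no real analytic or combinatorial obstacle in the argument itself; the content is the careful invocation of the cited fact.
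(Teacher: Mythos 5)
Your proposal is correct and matches the paper's own (implicit) argument: the paper likewise derives this theorem by combining the three-way equivalence of Theorem \ref{regular} with the cited fact from \cite[Lemma 6.2]{AA} that under CH no regular finitely additive probability measure vanishing on singletons exists below the first measurable cardinal, the forward direction coming from countable additivity implying regularity. Your observation that CH is needed only for the converse direction is also consistent with the paper, which proves that direction unconditionally in Theorem \ref{regularsingular} (ii).
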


Finally, there are two more things we can say without any set-theoretic
assumptions.

\begin{theorem}\label{regularsingular}
Let $\kappa$ be an infinite cardinal.
\begin{itemize}
\item[(i)] If $\kappa = \aleph_1$ then neither $l^\infty(\kappa)$ nor
$B(l^2(\kappa))$ has a regular singular state.

\item[(ii)] If $\kappa$ is Ulam real-valued measurable then both
$l^\infty(\kappa)$ and $B(l^2(\kappa))$ have regular singular states.
\end{itemize}
\end{theorem}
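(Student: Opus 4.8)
The plan is to treat the two parts separately; part (ii) is nearly immediate from results already established, while part (i) carries all the content. Throughout I would lean on Theorem \ref{regular}, which identifies the existence of a regular singular state on either $l^\infty(\kappa)$ or $B(l^2(\kappa))$ with the existence of a finitely additive regular probability measure on $\kappa$ that vanishes on singletons.

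For part (ii), suppose $\kappa$ is Ulam real-valued measurable, so by definition there is a countably additive probability measure $\mu$ on $\kappa$ vanishing on singletons. Such a $\mu$ is automatically regular in the finitely additive sense, since countable subadditivity forces any countable union of $\mu$-null sets to be $\mu$-null. Applying the equivalence in Theorem \ref{regular} then produces regular singular states on both $l^\infty(\kappa)$ and $B(l^2(\kappa))$ at once. (One could equally invoke Proposition \ref{measvsstate} (i) together with Theorem \ref{ulamrvmeas} to obtain singular countably additive states on the two algebras, which are regular by Theorem \ref{pureregular}.)

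For part (i), by Theorem \ref{regular} it suffices to show that there is no finitely additive regular probability measure on $\aleph_1$ which vanishes on singletons. Suppose toward a contradiction that $\mu$ is such a measure. Since $\mu$ vanishes on singletons and is regular, every countable subset of $\aleph_1$ is $\mu$-null, being a countable union of singletons. The key tool is an Ulam matrix: for each countable ordinal $\beta < \aleph_1$ fix an injection $h_\beta\colon \beta \to \omega$ (possible as $\beta$ is countable), and for $\alpha < \aleph_1$, $n < \omega$ put
$$A_{\alpha,n} = \{\beta < \aleph_1 : \alpha < \beta \text{ and } h_\beta(\alpha) = n\}.$$
Injectivity of each $h_\beta$ shows that for fixed $n$ the sets $\{A_{\alpha,n} : \alpha < \aleph_1\}$ are pairwise disjoint, while for fixed $\alpha$ the complement of $\bigcup_n A_{\alpha,n}$ is the countable set $\{\beta : \beta \leq \alpha\}$, hence $\mu$-null. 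Thus $\mu(\bigcup_n A_{\alpha,n}) = 1$ for every $\alpha$, and regularity forces, for each $\alpha$, some index $n(\alpha)$ with $\mu(A_{\alpha,n(\alpha)}) > 0$ (otherwise this countable union of null sets would be null). Since $\aleph_1$ is uncountable, the pigeonhole principle yields a single $n^*$ for which $\{\alpha : n(\alpha) = n^*\}$ is uncountable.

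The closing step, and the one place where finite additivity must be handled with care, is the observation that in any finitely additive probability space at most countably many pairwise disjoint sets can have positive measure: for each $k$ at most $k$ of them can exceed $1/k$ in measure, so the positive-measure pieces form a countable union of finite sets. But the sets $A_{\alpha,n^*}$ over our uncountably many chosen $\alpha$ are pairwise disjoint and each of positive measure, a contradiction. I expect the main obstacle to be arranging the Ulam matrix argument under the weaker hypothesis of regularity rather than full countable additivity; the crux is that regularity is exactly strong enough to extract a positive-measure entry in each row, after which the disjointness counting closes the argument.
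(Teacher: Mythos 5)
Your proof is correct and follows essentially the paper's own route: part (ii) is obtained from the countably additive results already established (Proposition \ref{measvsstate}(i), Theorem \ref{ulamrvmeas}, and the fact that countably additive implies regular), and part (i) combines Theorem \ref{regular} with the Ulam matrix argument of \cite[Lemma 10.13]{Jech}, noting that regularity of the finitely additive measure is all that is needed to extract a positive-measure entry in each row. The only difference is that you write out the Ulam matrix construction and the disjointness counting explicitly, where the paper simply cites Jech and remarks that regularity suffices.
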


Part (ii) is Proposition \ref{measvsstate} (i) and Theorem \ref{ulamrvmeas},
plus the fact that countably additive implies regular. For part (i) combine
Theorem \ref{regular} with the proof of \cite[Lemma 10.13]{Jech}; that result
is stated for countably additive measures, but regularity is all that is used.

\section{Ueda's peak set theorem}

A {\em peak projection} for a von Neumann algebra $M$ is a projection
$q \in M^{**}$ which is the weak* limit in $M^{**}$ of the sequence
$(a^n)$ for some positive contraction $a$ in $M$; equivalently, it is the
meet in $M^{**}$ of a countable set of projections in $M \subseteq M^{**}$
\cite{blueda}. (In \cite{blueda} this is stated for a countable decreasing
sequence of projections, but the proof given there does not need this
restriction.) It is {\em singular} if $\psi(q) = 0$
for every normal state $\psi$ on $M$. The von Neumann algebra version of
Ueda's peak set theorem states that for every singular state
$\phi$ on $M$ there is a singular peak projection $q$ with $\phi(q) = 1$.
It was shown in \cite{blueda} that there are counterexamples to this
statement if measurable cardinals exist. We will refine this conclusion.
We thank Louis Labuschagne for conversations related to this section.

The following alternative characterization will be helpful. We retain the
notation $\mathcal{F}_\phi$ from Section 4.

\begin{lemma}\label{altueda}
Ueda's theorem fails for a von Neumann algebra $M$ if and only if there is
a singular state $\phi$ on $M$ such that the meet (in $M$) of any
countable set of projections in $\mathcal{F}_\phi$ is nonzero.
\end{lemma}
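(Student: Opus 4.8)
The plan is to reduce the existence of a singular peak projection of $\phi$-value $1$ to the stated condition on countable meets in $\mathcal{F}_\phi$, and then quantify over singular states. Recall that by definition a peak projection is a meet $q = \bigwedge_n p_n$, computed in $M^{**}$, of some countable family of projections $p_n \in M$, and that $\phi$ extends canonically to a normal state $\tilde\phi$ on $M^{**}$. Everything hinges on two facts about such a $q$: a criterion for $\tilde\phi(q) = 1$, and a criterion for singularity of $q$.

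For the first, I would show that $\tilde\phi(q) = 1$ if and only if every $p_n$ lies in $\mathcal{F}_\phi$. The forward direction is immediate from $q \le p_n$, which forces $\phi(p_n) \ge \tilde\phi(q) = 1$. For the converse I would pass to the GNS representation $\pi_\phi$ of $\phi$ with implementing unit vector $v$, exactly as in the treatment of $\mathcal{F}_\phi$ earlier in the paper: $\phi(p_n) = 1$ gives $\pi_\phi(p_n)v = v$ via $\|\pi_\phi(p_n)v - v\|^2 = 1 - \phi(p_n) = 0$. The normal extension $\tilde\pi_\phi$ carries $q$ to $\bigwedge_n \pi_\phi(p_n)$ in $\pi_\phi(M)''$, and since $v$ lies in the range of every $\pi_\phi(p_n)$ it lies in the range of their meet, so $\tilde\phi(q) = \langle \tilde\pi_\phi(q)v, v\rangle = 1$.

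The singularity criterion is the crux, and it is where I would bring in the canonical normal $*$-homomorphism $\pi = i_M^* \colon M^{**} \to M$ restricting to the identity on $M$, whose kernel is $z_M M^{**}$ for the singular projection $z_M$ of Lemma \ref{singular}. Being normal, $\pi$ preserves meets, so $\pi(q) = \bigwedge_n \pi(p_n) = \bigwedge_n p_n$, the meet taken in $M$. Every normal state $\psi$ on $M$ has canonical normal extension $\tilde\psi = \psi \circ \pi$, hence $\tilde\psi(q) = \psi(\pi(q))$; since normal states separate the positive elements of $M$, the vanishing of $\tilde\psi(q)$ for all normal $\psi$ is equivalent to $\pi(q) = 0$, that is, to the $M$-meet $\bigwedge_n p_n$ being zero. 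Thus $q$ is singular exactly when the meet in $M$ of its defining projections vanishes.

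With these two facts the lemma follows by bookkeeping. A singular peak projection $q$ with $\tilde\phi(q) = 1$ exists if and only if there is a countable family in $\mathcal{F}_\phi$ whose meet in $M$ is zero; negating, $\phi$ admits no such projection precisely when every countable family in $\mathcal{F}_\phi$ has nonzero meet in $M$. Quantifying over singular states, Ueda's theorem fails for $M$ (some singular $\phi$ has no singular peak projection of value $1$) if and only if some singular $\phi$ has the property that every countable meet drawn from $\mathcal{F}_\phi$ is nonzero, which is the assertion. I expect the singularity criterion of the third paragraph to be the main obstacle: one must correctly relate the meet taken in $M^{**}$ to the meet taken in $M$ through the collapse map $\pi$, and verify the identification $\tilde\psi = \psi \circ \pi$ for normal $\psi$. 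Once that identification is secured the remaining steps are formal.
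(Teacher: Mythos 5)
Your proof is correct, and its overall skeleton is the same as the paper's: first characterize the peak projections $q$ with $\tilde\phi(q)=1$ as exactly the meets in $M^{**}$ of countable families from $\mathcal{F}_\phi$, then show such a $q$ is singular precisely when the corresponding meet computed in $M$ vanishes, then do the bookkeeping. The differences are in how the two sub-steps are proved, and the second one is substantive. For the value-$1$ criterion the paper notes that each $p_n\in\mathcal{F}_\phi$ dominates the support projection of $\tilde\phi$ in $M^{**}$, whence so does $q$; your GNS argument is an equivalent routine variant. For the singularity criterion the paper works directly with normal states: it cites Pr\'ager's theorem that $\psi(p_1\wedge\cdots\wedge p_n)=\lim_k\psi\bigl((p_1\cdots p_n\cdots p_1)^k\bigr)$, which shows a normal $\psi$ takes the same value on a finite meet whether that meet is computed in $M$ or in $M^{**}$, and then lets $n\to\infty$. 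You instead route everything through the collapse map $\pi=i_M^*\colon M^{**}\to M$: since $\pi$ is a unital normal $*$-homomorphism restricting to the identity on $M$, it carries the $M^{**}$-meet of the $p_n$ to their $M$-meet, and $\tilde\psi=\psi\circ\pi$ for normal $\psi$, so $q$ is singular iff $\pi(q)=0$ iff the $M$-meet is zero. This buys a cleaner, citation-free statement of the crux (and is in the same spirit as the paper's own Lemma \ref{singular}, which already works with $\ker i_M^*$), at the cost of having to know that $i_M^*$ is a normal $*$-homomorphism and that such maps preserve countable meets of projections; that last fact deserves one explicit line in your write-up (finite meets via $p\wedge q=\lim_k(pqp)^k$ together with multiplicativity and weak* continuity, then a decreasing limit), since $\pi$ is very far from injective and the meets in $M^{**}$ and $M$ genuinely differ --- which is exactly the point of the lemma.
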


\begin{proof}
If $q \in M^{**}$ is a peak projection then it is the meet in $M^{**}$ of
a countable set of projections $\{p_n\}$ in $M$, and if $\phi(q) = 1$ then
we must have $\phi(p_n) = 1$ for all $n$. Thus $q$ is the meet in $M^{**}$
of a countable set of projections in $\mathcal{F}_\phi$. Conversely, if $q$
is the meet in $M^{**}$ of a countable set of projections $\{p_n\}$ in
$\mathcal{F}_\phi$ then it is a peak projection, and $\phi(p_n) = 1$ for all
$n$ implies that each $p_n$ dominates the support projection for $\phi$ in
$M^{**}$, so that $q$ must as well. Thus $\phi(q) = 1$. We have shown that
a projection $q \in M^{**}$ is a peak projection with $\phi(q) = 1$ if and
only if there is a countable set $\{p_n\}$ in $\mathcal{F}_\phi$ whose meet
in $M^{**}$ is $q$. In this case $q$ will be singular if and only if the
meet of the $p_n$ in $M$ is zero. (If the meet in $M$ is zero then any normal
state $\psi$ will satisfy $\psi(p_1 \wedge \cdots \wedge p_n) \to 0$
as $n \to \infty$, and hence $\psi(q) = 0$; conversely, if the meet in $M$
is not zero then there is a normal state which does not vanish on this meet
and hence does not vanish on $q$. Note that
$\psi(p_1 \wedge \cdots \wedge p_n) = \lim_{k \to \infty}
\psi( (p_1\cdots p_n\cdots p_1)^k)$ \cite{Prager}, so that if $\psi$ is
normal then $\psi(p_1\wedge \cdots \wedge p_n)$ has the same
value regardless of whether the meet is taken in $M$ or $M^{**}$.) Thus,
to say that there is a singular peak projection $q$ with $\phi(q) = 1$ is
to say that there is a countable set in $\mathcal{F}_\phi$ whose meet in
$M$ is zero.
\end{proof}

The condition given in Lemma \ref{altueda} is closely related to regularity.

\begin{proposition}\label{mcip}
Let $\phi$ be a state on a von Neumann algebra $M$. The following are
equivalent:
\begin{itemize}
\item[(i)] the meet of any countable set of projections in
$\mathcal{F}_\phi$ is nonzero

\item[(ii)] there is a projection $p \in M$ with $\phi(p) > 0$ such that
the state $\psi: x \mapsto \frac{1}{\phi(p)}\phi(pxp)$ is regular.
\end{itemize}
Moreover, if $\phi$ is singular then so is $\psi$.
\end{proposition}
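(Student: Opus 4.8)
The first move is to restate regularity lattice-theoretically. Writing $q_n^\perp = 1-q_n$ and comparing with the defining implication $\psi(q_n)=0\Rightarrow\psi(\bigvee q_n)=0$, one sees that a state $\psi$ is regular if and only if the quantum filter $\mathcal F_\psi$ is closed under countable meets taken in $M$; this is exactly the complement of Lemma \ref{qfilter}, and the two statements are literally contrapositive to each other. With this reformulation conditions (i) and (ii) become parallel: (i) asks that countable meets of members of $\mathcal F_\phi$ be nonzero, whereas regularity of $\psi$ asks that countable meets of members of $\mathcal F_\psi$ again lie in $\mathcal F_\psi$. I would work throughout in the GNS representation $(\pi_\phi, H_\phi, v)$ of $\phi$, where $\phi(q)=1$ precisely when $\pi_\phi(q)v = v$. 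Setting $w = \pi_\phi(p)v$ (so $\|w\|^2 = \phi(p)$), the compressed functional is the vector state $\psi = \langle \pi_\phi(\cdot)w, w\rangle/\|w\|^2$, and $\mathcal F_\psi = \{r : \pi_\phi(r)w = w\}$.

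The ``moreover'' clause is the quickest piece, so I would dispatch it first. Let $z \in M^{**}$ be the singular projection for $M$ as in Lemma \ref{singular}, so that $\phi$ singular means $\tilde\phi(zx) = \tilde\phi(x)$ for the normal extension $\tilde\phi$ of $\phi$ to $M^{**}$. Since $z$ is central and $p \in M \subseteq M^{**}$, we have $pzxp = z\,pxp$, and hence $\psi(zx) = \frac{1}{\phi(p)}\tilde\phi(pzxp) = \frac{1}{\phi(p)}\tilde\phi(z\,pxp) = \frac{1}{\phi(p)}\tilde\phi(pxp) = \psi(x)$ for all $x$. Thus $\psi$ is again singular.

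For (ii) $\Rightarrow$ (i) the basic computation I would record is that whenever $t \ge p^\perp$ the identity $(1-\pi_\phi(t))w = (1-\pi_\phi(t))v$ holds (because $1-t \le p$ forces $(1-\pi_\phi(t))\pi_\phi(p) = 1-\pi_\phi(t)$), so that $\psi(t)=1 \Leftrightarrow \phi(t)=1$. In particular, for any $q \in \mathcal F_\phi$ one has $q \vee p^\perp \in \mathcal F_\phi$ (it dominates $q$) and also $q \vee p^\perp \in \mathcal F_\psi$. Given $\{q_n\} \subseteq \mathcal F_\phi$, closure of $\mathcal F_\psi$ under countable meets then places $s := \bigwedge_M (q_n \vee p^\perp)$ in $\mathcal F_\psi \cap \mathcal F_\phi$, and $\phi(s)=1 > \phi(p^\perp)$ forces $s > p^\perp$, whence $s \wedge p \neq 0$. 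The point that needs genuine care is that $s \wedge p$ need not lie below $\bigwedge_M q_n$, since the projection lattice is not distributive; the naive lift $q \mapsto q \vee p^\perp$ raises meets up to $p^\perp$ and so loses the information that $\bigwedge_M q_n$ might have been zero. To close this gap I would abandon lattice identities and argue at the level of support projections in $\pi_\phi(M)''$, using that $s(\psi)$ is the range projection of $\pi_\phi(p)\,s(\phi)$, whose central support is dominated by that of $\phi$; the aim is to show that a genuine zero-meet family for $\phi$ would descend, through this inclusion of central supports, to a countable-meet failure for $\psi$, contradicting its regularity.

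For (i) $\Rightarrow$ (ii) I must manufacture the projection $p$, and this is where the main obstacle lies. The natural candidate is a countable meet $p = \bigwedge_M q_n$ of members of $\mathcal F_\phi$ chosen as small as possible: condition (i) guarantees that every such meet is nonzero, and the family of these ``peak'' projections is itself closed under countable meets, so a descending transfinite process should land on a projection that cannot be properly refined from within $\mathcal F_\phi$. One then hopes that compression to such a minimal peak makes $\mathcal F_\psi$ automatically closed under countable meets, i.e.\ makes $\psi$ regular. The hard part is twofold. First, one must secure $\phi(p) > 0$, since a priori $\phi$ can vanish on a nonzero meet, in which case $\psi$ is not even defined; this is the step I expect to fight with most. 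Second, one must control the transfinite descent at limit stages, where meets of uncountably many peaks need no longer be peaks and could collapse — and it is precisely condition (i) that prevents collapse along countable cofinal subsequences. I anticipate that the minimality of $p$ is exactly what upgrades ``meets nonzero'' to ``meets in the filter,'' thereby yielding the regularity of $\psi$.
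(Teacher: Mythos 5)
Your two preliminary reductions are fine: regularity of $\psi$ is indeed equivalent to closure of $\mathcal{F}_\psi$ under countable meets (it is the complemented form of the definition, which is how Lemma \ref{qfilter} gets used), and your argument for the ``moreover'' clause, via centrality of the singular projection and the multiplicative domain of the normal extension of $\phi$, is essentially the paper's and is sound. But neither implication of the equivalence is actually proved in your write-up, and these are genuine gaps, not polish. For (i) $\Rightarrow$ (ii), transfinite descent to a ``minimal peak'' is the wrong tool: the class of countable meets of members of $\mathcal{F}_\phi$ is only stable under countably many refinements, so the descent leaves the class at uncountable limit stages and need not stabilize, and, as you admit, nothing in it secures $\phi(p)>0$. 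The paper replaces ordinal minimality by \emph{numerical} minimality, and this one device dissolves both worries. Let $a=\inf\{\phi(q)\colon q \mbox{ is the meet of a countable subset of } \mathcal{F}_\phi\}$. The infimum is attained: choose for each $m$ a countable family with $\phi(\bigwedge_n q^m_n)\le a+\frac{1}{m}$; the union of these families is a countable family $\{p_n\}\subseteq\mathcal{F}_\phi$ whose meet $p$ satisfies $\phi(p)=a$. If $a=0$ then $\{p_n-p\}$ is a countable subset of $\mathcal{F}_\phi$ with meet zero, contradicting (i); hence $\phi(p)=a>0$. Minimality then forces $\phi$ to be regular on $pMp$: if $\{r_n\}\subseteq pMp\cap{\rm ker}\,\phi$ were projections with $\phi(\bigvee r_n)>0$, then $p_n-r_n\in\mathcal{F}_\phi$ and $\bigwedge_n(p_n-r_n)=p-\bigvee r_n$ would be a countable meet of members of $\mathcal{F}_\phi$ with $\phi$-value $<a$. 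Finally regularity of $\psi$ on all of $M$ follows by Lemma \ref{altregular}: if $(x_n)$ is a sequence of positive elements of ${\rm ker}\,\psi$ converging weak* to $x$, then $px_np\to pxp$ weak* inside $pMp$, so $\phi(pxp)=0$, i.e.\ the positive part of ${\rm ker}\,\psi$ is weak* sequentially closed.

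For (ii) $\Rightarrow$ (i) you correctly sense the danger: non-distributivity means your lift $q\mapsto q\vee p^\perp$ destroys the meet, and the repair you gesture at (central supports in $\pi_\phi(M)''$) is never carried out, so this direction is simply missing from your proposal. The paper's own treatment is a short chain: it asserts $\phi(p\wedge q_n)=1-\phi(p^\perp\vee q_n^\perp)\ge 1-\phi(p^\perp)-\phi(q_n^\perp)=\phi(p)$, whence $\psi(q_n)\ge\phi(p\wedge q_n)/\phi(p)=1$, so $\mathcal{F}_\phi\subseteq\mathcal{F}_\psi$ and regularity of $\psi$ gives $\psi(\bigwedge q_n)=1\neq 0$. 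You should be aware, though, that the middle step rests on $\phi(e\vee f)\le\phi(e)+\phi(f)$ for projections, which is not an operator inequality and can fail for non-commuting projections even when $\phi(f)=0$: for the vector state $\phi=\langle\,\cdot\,u,u\rangle$ on $M_2$, $q=uu^*$ and $p$ a rank-one projection at angle $\alpha$ to $u$, one has $q\in\mathcal{F}_\phi$ and $\psi$ regular (it is the vector state at the unit vector spanning $p$), yet $\phi(p\wedge q)=0<\cos^2\alpha=\phi(p)$ and $\psi(q)=\cos^2\alpha<1$. So your instinct that this direction needs ``genuine care'' is well founded. Two mitigating remarks: for the particular $p$ produced in (i) $\Rightarrow$ (ii), the inequality $\phi(p\wedge q)\ge\phi(p)$ \emph{does} hold --- because $p\wedge q$ is again a countable meet of members of $\mathcal{F}_\phi$, so minimality of $a$ applies, not any lattice inequality --- and the only use of (ii) $\Rightarrow$ (i) in Theorem \ref{ueda} is the trivial case $p=1$, where the claim is immediate from Lemma \ref{qfilter}. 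If you want the full two-sided equivalence, this is the point that still needs an argument.
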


\begin{proof}
(i) $\Rightarrow$ (ii) \ Suppose the meet of any countable set of projections
in $\mathcal{F}_\phi$ is nonzero. Let $a = \inf\{\phi(q): q$ is the meet
of a countable set of projections in $\mathcal{F}_\phi\}$. Then for each
$m \in \mathbb{N}$ we can find a countable set of projections $\{q^m_n\}$ in
$\mathcal{F}_\phi$ with satisfies $\phi(\bigwedge_n q^m_n) \leq
a + \frac{1}{m}$. The union of these families will be a countable set
of projections $\{p_n\}$ whose meet $p$ satisfies $\phi(p) = a$. Thus, the
infimum is achieved.

If $a = 0$ then the projections $p_n - p$ will belong to $\mathcal{F}_\phi$
and have meet zero, contradicting the hypothesis. So $a > 0$, and we need
only show that $\psi: x \mapsto \frac{1}{\phi(p)}\phi(pxp)$ is regular. We
first claim that the restriction of $\phi$ to $pMp$ is regular.
Suppose not; then there is a countable set of projections $\{r_n\}$ in
$pMp \cap {\rm ker}\, \phi$ whose join does not belong to ${\rm ker}\, \phi$.
But then the projections $p_n - r_n$ belong to $\mathcal{F}_\phi$ while
$$\phi\left(\bigwedge(p_n - r_n)\right)
= \phi\left(p - \bigvee r_n\right) < a,$$
a contradiction. This proves the claim.

We invoke Lemma \ref{altregular} to show that $\psi$ is regular. Let
$(x_n)$ be a sequence of positive elements in ${\rm ker}\, \psi$ which
converges weak* to $x \in M$. Then $\phi(px_np) = 0$ for all $n$ and
$px_np \to pxp$ weak*, so one direction of Lemma \ref{altregular} plus
the fact that $\phi$ is regular on $pMp$ shows
that $\phi(pxp) = 0$, i.e., $\psi(x) = 0$. Thus the positive part of the
kernel of $\psi$ is weak* sequentially closed, and the other direction of
Lemma \ref{altregular} then shows that $\psi$ is regular.

(ii) $\Rightarrow$ (i) \ Suppose there is a projection $p \in M$ with
$\phi(p) > 0$ such that $\psi: x \mapsto \frac{1}{\phi(p)}\phi(pxp)$ is
regular and let $\{q_n\}$ be any countable set of projections in
$\mathcal{F}_\phi$. Then for each $n$
$$\phi(p \wedge q_n) = 1 - \phi(p^\perp \vee q_n^\perp) \geq
1 - \phi(p^\perp + q_n^\perp) = 1 - \phi(p^\perp) = \phi(p)$$
where, as before, $p^\perp = 1 - p$. Thus $\psi(q_n) = 1$ for all $n$,
and by regularity of $\psi$ we therefore have $\psi(\bigwedge q_n) = 1$.
So certainly $\bigwedge q_n \neq 0$.

Finally, if $z$ is the singular projection for $M$ then applying Lemma
\ref{singular} to the conditional expectation $x \mapsto pxp$ from $M$ onto
$pMp$ yields that $pz$ is the singular projection for $pMp$.  Thus if $\phi$
is singular then $\psi(pz) = \frac{1}{\phi(p)}\phi(pz) = 1$, so $\psi$ is
also singular.
\end{proof}

We can now sharpen the result from \cite{blueda} mentioned earlier.

\begin{theorem}\label{ueda}
Let $M$ be a von Neumann algebra. The following are equivalent:
\begin{itemize}
\item[(i)] Ueda's theorem fails for $M$

\item[(ii)] $M$ possesses a regular singular state

\item[(iii)] $M$ is not ${<}\,\kappa$-decomposable where $\kappa$ is the
first cardinal that admits a finitely additive regular probability
measure which vanishes on singletons.
\end{itemize}
\end{theorem}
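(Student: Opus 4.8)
The plan is to prove the two equivalences (i)$\Leftrightarrow$(ii) and (ii)$\Leftrightarrow$(iii) separately, leaning on the machinery already in place. The first equivalence should be immediate from the two preceding results. By Lemma \ref{altueda}, Ueda's theorem fails for $M$ exactly when there is a singular state $\phi$ for which the meet of every countable subset of $\mathcal{F}_\phi$ is nonzero, and Proposition \ref{mcip} characterizes that meet condition (for singular $\phi$) as the existence of a projection $p$ with $\phi(p)>0$ whose compression $x\mapsto\frac{1}{\phi(p)}\phi(pxp)$ is a regular singular state. So if Ueda's theorem fails I would simply read off a regular singular state from Proposition \ref{mcip}, giving (i)$\Rightarrow$(ii); conversely, given a regular singular state $\psi$, I would apply Proposition \ref{mcip} with $\phi=\psi$ and $p=1$ to conclude that countable meets in $\mathcal{F}_\psi$ are nonzero, and then invoke Lemma \ref{altueda} to obtain (ii)$\Rightarrow$(i).

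For (ii)$\Rightarrow$(iii) I would mimic the forward half of Theorem \ref{vnalg1}. Starting from a regular singular state $\phi$, Zorn's lemma produces a maximal family $\{p_\alpha:\alpha\in\lambda\}$ of mutually orthogonal projections in $\ker\phi$ summing to $1$, and $\mu(S)=\phi(\sum_{\alpha\in S}p_\alpha)$ is a finitely additive probability measure on $\lambda$ that vanishes on singletons (since each $p_\alpha\in\ker\phi$). The new ingredient is that $\mu$ inherits \emph{regularity} from $\phi$: if each $S_n$ is $\mu$-null then $q_n=\sum_{\alpha\in S_n}p_\alpha\in\ker\phi$, and because $\bigvee_n q_n=\sum_{\alpha\in\bigcup_n S_n}p_\alpha$, regularity of $\phi$ forces $\mu(\bigcup_n S_n)=0$. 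Hence $\lambda$ supports a finitely additive regular probability measure vanishing on singletons, so $\lambda\geq\kappa$ and $M$ is not ${<}\,\kappa$-decomposable.

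The reverse implication (iii)$\Rightarrow$(ii) is where the real work lies, and I would model it on the reverse half of Theorem \ref{vnalg1}. If $M$ is not ${<}\,\kappa$-decomposable, a maximal orthogonal family summing to $1$ has cardinality $\geq\kappa$, and after grouping its members into $\kappa$ blocks I obtain nonzero orthogonal projections $\{\tilde p_\beta:\beta\in\kappa\}$ with $\sum_\beta\tilde p_\beta=1$. Choosing normal states $\psi_\beta$ on each $\tilde p_\beta M\tilde p_\beta$ and letting $E\colon M\to\bigoplus_\beta\tilde p_\beta M\tilde p_\beta$ be the canonical normal conditional expectation, I set $\Psi=\omega\circ E\colon M\to l^\infty(\kappa)$, a normal conditional expectation onto the copy of $l^\infty(\kappa)$ generated by the $\tilde p_\beta$. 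Taking $\phi$ to be the singular state on $l^\infty(\kappa)$ given by integration against the regular measure $\mu$ on $\kappa$ furnished by (iii) (such a $\kappa$ being exactly what makes the condition meaningful; if no cardinal supports such a measure then (ii) and (iii) both fail trivially), I would put $\rho=\phi\circ\Psi$ and claim it is a regular singular state. Singularity follows verbatim from Theorem \ref{vnalg1} via Lemma \ref{singular}, since $\Psi^{**}$ carries the singular projection of $M$ to that of $l^\infty(\kappa)$, on which $\phi$ takes the value $1$.

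The crux, and the step I expect to be the main obstacle, is verifying that $\rho$ is regular; I would run an adaptation of the support argument from the proof of Theorem \ref{regular}. Given projections $q_n\in M$ with $\rho(q_n)=0$, each $\Psi(q_n)\geq 0$ has $\mu$-null support $T_n=\{\beta:\Psi(q_n)(\beta)>0\}$ (by Markov's inequality against the simple functions $\frac{1}{k}\chi_{\{\Psi(q_n)\geq 1/k\}}$), and $T=\bigcup_n T_n$ is $\mu$-null by regularity of $\mu$. The delicate point is to show $\Psi(\bigvee_n q_n)$ is supported inside $T$. Writing $s_\beta\leq\tilde p_\beta$ for the support projection of $\psi_\beta$, the vanishing $\Psi(q_n)(\beta)=\psi_\beta(\tilde p_\beta q_n\tilde p_\beta)=0$ for $\beta\notin T$ forces $s_\beta q_n s_\beta=0$, hence $q_n s_\beta=0$, for every $n$; therefore $s_\beta$ lies under $1-\bigvee_n q_n$ and $\Psi(\bigvee_n q_n)(\beta)=0$. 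Thus the support of $\Psi(\bigvee_n q_n)$ sits inside the null set $T$, giving $\rho(\bigvee_n q_n)=\int\Psi(\bigvee_n q_n)\,d\mu=0$ and establishing regularity. The essential maneuver is precisely this passage from pointwise vanishing of the $\Psi(q_n)$ to that of $\Psi(\bigvee_n q_n)$, which is why working through the support projections $s_\beta$ rather than the states $\psi_\beta$ directly is what makes the argument go through.
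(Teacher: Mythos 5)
Your proof is correct. Two of its three legs coincide with the paper's: the equivalence (i) $\Leftrightarrow$ (ii) is handled exactly as in the paper (Lemma \ref{altueda} plus Proposition \ref{mcip}; your application of Proposition \ref{mcip} with $p = 1$ for the reverse direction is just a repackaging of the paper's appeal to Lemma \ref{qfilter}), and your (iii) $\Rightarrow$ (ii) is the paper's construction $\rho = \phi\circ\omega\circ E$ verbatim. The support-projection argument you give for regularity of $\rho$ is precisely the detail the paper elides with ``just as in the proof of Theorem \ref{regular}'': in $B(l^2(\kappa))$ the diagonal expectation is built from vector states, whose supports are the rank-one diagonal projections, and in the general case one must indeed route the argument through the support projections $s_\beta$ of the chosen normal states $\psi_\beta$, exactly as you do.

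Where you genuinely diverge is (ii) $\Rightarrow$ (iii), and your route is in fact the safer one. The paper proves the contrapositive: if $M$ is ${<}\,\kappa$-decomposable then \emph{every} regular state is completely additive, hence normal; the key step there is that the restriction of a regular state to the algebra generated by an arbitrary orthogonal family is integration against a regular measure $\mu$, whose diffuse part $\mu - \nu$ is claimed to be regular, hence zero since it lives on a set of size $< \kappa$. You instead argue directly: given a regular \emph{singular} state, the Zorn/\cite{AA} decomposition (as in the forward half of Theorem \ref{vnalg1}) produces an orthogonal family in $\ker\phi$ summing to $1$, so the induced measure vanishes on singletons by construction, and regularity transfers immediately because joins of subfamilies are again sums over subsets of indices. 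What your route buys is that it never needs the atomic/diffuse splitting: the paper's inference that regularity of $\mu$ passes to $\mu - \nu$ is not valid in general --- a measure on $\omega$ of the form $\frac{1}{2}\nu_0 + \frac{1}{2}d$, with $\nu_0$ faithful purely atomic and $d$ diffuse, has no nonempty null sets and so is vacuously regular, yet its diffuse part $\frac{1}{2}d$ is not regular; correspondingly the intermediate claim in the paper's contrapositive (every regular state on a ${<}\,\kappa$-decomposable algebra is normal) fails already for $l^\infty(\omega)$. The implication (ii) $\Rightarrow$ (iii) of the theorem is nevertheless true, because singularity is available, and your argument uses it at exactly the right point: it forces the induced measure to vanish on singletons, which is what the paper's argument was trying to extract from the (unjustified) regularity of $\mu - \nu$. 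So your proof does not merely differ from the paper's; on this direction it repairs it.
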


\begin{proof}
(i) $\Leftrightarrow$ (ii) \ The forward direction follows from Lemma
\ref{altueda} and Proposition \ref{mcip}, and the reverse direction follows
immediately from Lemma \ref{altueda}, since if $\phi$ is regular the meet
of any countable set of projections in $\mathcal{F}_\phi$ belongs to
$\mathcal{F}_\phi$ (Lemma \ref{qfilter}).

(ii) $\Rightarrow$ (iii) \ We prove the contrapositive. Let $\kappa$ be as
in part (iii) and suppose $M$ is ${<}\,\kappa$-decomposable. Let $\phi$ be
a regular state on $M$; we will show it must be completely additive and
therefore normal. To see this, let $\{p_\alpha: \alpha \in \lambda\}$ be
any family of mutually orthogonal nonzero projections in $M$. The von Neumann
subalgebra they generate is isomorphic to $l^\infty(\lambda)$, and the
restriction of $\phi$ to this embedded $l^\infty(\lambda)$ is given by
integration against a regular finitely additive measure $\mu$ on $\lambda$.
Letting $\nu$ be the unique completely additive measure which agrees with
$\mu$ on singletons, we infer that $\mu - \nu$ is a regular finitely
additive measure which vanishes on singletons, and since $\lambda < \kappa$
this shows that difference must be zero, i.e., $\mu = \nu$ and hence the
restriction of $\phi$ to the embedded $l^\infty(\lambda)$ is normal. We
conclude that $\phi$ is completely additive, as desired.

(iii) $\Rightarrow$ (ii) \ Let $\kappa$ be as in part (iii) and suppose
$M$ is not ${<}\,\kappa$-decomposable. Then there is a family
$\{p_\alpha: \alpha \in \kappa\}$ of mutually orthogonal nonzero projections
in $M$ with sum $1$. As in the proof of Theorem \ref{vnalg1},
let $N = \bigoplus p_\alpha M p_\alpha$, for each
$\alpha$ let $\psi_\alpha$ be a normal state on $p_\alpha M p_\alpha$
and use these states to build a normal map
$\omega: N \to l^\infty(\kappa)$. Let $E: M \to N$ be the canonical normal
conditional expectation, and by Theorem \ref{regular} let $\phi$ be a
regular singular state on $l^\infty(\kappa)$ given by integration against
a finitely additive regular probability measure $\mu$ which vanishes on
singletons. We claim that $\rho = \phi\circ \omega\circ E$ is a regular
singular state on $M$.

It is clear that $\rho$ is a state. For regularity, let $\{q_n\}$ be a
countable family of projections in ${\rm ker}\, \rho$. Then for each $n$,
the integral $\int \omega(E(q_n))\, d\mu$ is zero, and just as in the
proof of Theorem \ref{regular} this implies that
$\int \omega(E(\bigvee q_n))\, d\mu$ is zero, i.e., $\rho(\bigvee q_n) = 0$.
The argument at the end of the proof of Theorem \ref{vnalg1} shows that
$\rho$ is singular.
\end{proof}

As in Theorem \ref{chthm}, under the continuum hypothesis condition (iii)
can be simplified.

\begin{corollary}\label{chcor}
Assuming the continuum hypothesis, Ueda's theorem fails for a von Neumann
algebra $M$ if and only if $M$ contains a family of $\kappa$ many mutually
orthogonal nonzero projections where $\kappa$ is a measurable cardinal.
\end{corollary}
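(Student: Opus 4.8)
The plan is to specialize Theorem \ref{ueda} under the continuum hypothesis, exactly as Theorem \ref{chthm} specialized the analogous statement for $l^\infty(\kappa)$ and $B(l^2(\kappa))$. Theorem \ref{ueda} already tells us that Ueda's theorem fails for $M$ if and only if $M$ is not ${<}\,\kappa_0$-decomposable, where $\kappa_0$ is the first cardinal admitting a finitely additive regular probability measure which vanishes on singletons. So the entire content of the corollary is the identification, under CH, of $\kappa_0$ with the first measurable cardinal, together with a routine translation of the decomposability condition into the stated form.

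First I would pin down $\kappa_0$. On one hand, any measurable cardinal $\kappa$ carries a nonzero ${<}\,\kappa$-additive $\{0,1\}$-valued measure vanishing on singletons; such a measure is in particular countably additive, and countably additive measures are automatically regular (a countable union of null sets, rewritten as a disjoint union, has measure zero by countable additivity). Hence the first measurable cardinal does admit a finitely additive regular probability measure vanishing on singletons, so $\kappa_0$ is at most the first measurable cardinal. On the other hand, invoking the consequence of \cite[Lemma 6.2]{AA} recorded before Theorem \ref{chthm}, namely that under CH no cardinal below the first measurable cardinal admits such a measure, we get that $\kappa_0$ is at least the first measurable cardinal. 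Therefore, under CH, $\kappa_0$ equals the first measurable cardinal.

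It then remains to rephrase the conclusion of Theorem \ref{ueda}. With $\kappa_0$ the first measurable cardinal, $M$ being not ${<}\,\kappa_0$-decomposable means precisely that $M$ carries a family of mutually orthogonal nonzero projections of cardinality at least $\kappa_0$. If such a family exists, any subfamily of cardinality exactly $\kappa_0$ witnesses that $M$ contains $\kappa$ many mutually orthogonal nonzero projections for the measurable cardinal $\kappa = \kappa_0$. Conversely, if $M$ contains $\kappa$ many mutually orthogonal nonzero projections for some measurable cardinal $\kappa$, then $\kappa \geq \kappa_0$ and so $M$ fails to be ${<}\,\kappa_0$-decomposable. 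This yields the stated equivalence.

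I do not anticipate a serious obstacle here: all the substantive set-theoretic and operator-algebraic work is already packaged into Theorem \ref{ueda} and the cited consequence of \cite[Lemma 6.2]{AA}. The only point demanding mild care is the inequality $\kappa_0 \le$ (first measurable cardinal), where one must note that the $\{0,1\}$-valued ${<}\,\kappa$-additive measure supplied by measurability is regular; the remainder is bookkeeping with the definition of ${<}\,\kappa$-decomposability.
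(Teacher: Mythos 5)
Your proof is correct and follows exactly the route the paper intends: the corollary is stated as an immediate specialization of Theorem \ref{ueda}, identifying (under CH, via the consequence of \cite[Lemma 6.2]{AA} recorded before Theorem \ref{chthm}) the first cardinal carrying a regular finitely additive probability measure vanishing on singletons with the first measurable cardinal, and then translating non-${<}\,\kappa$-decomposability into the existence of the stated family of projections. Your added verification that the witnessing measure on a measurable cardinal is regular (countably additive, hence closed under countable unions of null sets) is precisely the small detail the paper leaves implicit.
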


We may also note that if Ueda's theorem fails for $M$ then it also fails
for every nonselfadjoint subalgebra of $M$.

Theorem \ref{regularsingular} now yields information about the validity
of Ueda's theorem.

\begin{corollary}\label{uedacor}
Let $\kappa$ be an infinite cardinal.
\begin{itemize}
\item [(i)] If $\kappa = \aleph_1$ then Ueda's theorem holds for both
$l^\infty(\kappa)$ and $B(l^2(\kappa))$.
\item [(ii)] If $\kappa$ is Ulam real-valued measurable then Ueda's theorem
fails for both $l^\infty(\kappa)$ and $B(l^2(\kappa))$.
\end{itemize}
\end{corollary}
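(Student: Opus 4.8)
The plan is to read this off directly from the equivalence (i) $\Leftrightarrow$ (ii) of Theorem \ref{ueda} together with Theorem \ref{regularsingular}. The crucial point established in Theorem \ref{ueda} is that Ueda's theorem fails for a von Neumann algebra $M$ precisely when $M$ admits a regular singular state; equivalently, Ueda's theorem holds for $M$ exactly when $M$ has no regular singular state. So the entire question of validity for $l^\infty(\kappa)$ and $B(l^2(\kappa))$ is reduced to the question of whether these two algebras possess regular singular states, and that question is settled by Theorem \ref{regularsingular}.

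For part (i) I would take $\kappa = \aleph_1$. Theorem \ref{regularsingular}(i) tells us that in this case neither $l^\infty(\kappa)$ nor $B(l^2(\kappa))$ has a regular singular state. Feeding this into the equivalence (i) $\Leftrightarrow$ (ii) of Theorem \ref{ueda}, the failure of condition (ii) forces the failure of condition (i), so Ueda's theorem must hold for each of the two algebras. For part (ii) I would assume $\kappa$ is Ulam real-valued measurable. Then Theorem \ref{regularsingular}(ii) furnishes a regular singular state on each of $l^\infty(\kappa)$ and $B(l^2(\kappa))$, so condition (ii) of Theorem \ref{ueda} holds, and therefore Ueda's theorem fails for both algebras.

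Since both halves are immediate consequences of results already in hand, I do not anticipate any genuine obstacle; the only bookkeeping is matching the hypotheses of Theorem \ref{regularsingular} with condition (ii) of Theorem \ref{ueda}. The one point worth noting is that the equivalence in Theorem \ref{ueda} is stated for an arbitrary von Neumann algebra $M$, so it applies verbatim to both $l^\infty(\kappa)$ and $B(l^2(\kappa))$ with no further adjustment, and the corollary drops out at once.
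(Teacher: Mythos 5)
Your proposal is correct and is precisely the argument the paper intends: the corollary is stated as an immediate consequence of combining the equivalence (i) $\Leftrightarrow$ (ii) of Theorem \ref{ueda} with the two parts of Theorem \ref{regularsingular}, exactly as you do. No gaps; the paper itself gives no further proof because this combination is the whole content.
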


In particular, it is a theorem of ZFC that there are non $\sigma$-finite von
Neumann algebras which satisfy Ueda's theorem.

Regarding part (ii), we wish to emphasize that (subject to the consistency
of measurable cardinals) it is consistent that $2^{\aleph_0}$ is Ulam
real-valued measurable, and so it is consistent that Ueda's theorem can
already fail for $l^\infty(2^{\aleph}_0)$.


\begin{thebibliography}{99} 

\bibitem{AA}  C.\ A.\ Akemann and J.\ Anderson, {\em Lyapunov Theorems for Operator Algebras}, Mem.\ Amer.\ Math.\ Soc.\ {\bf 94} (1991), no.\ 458.

\bibitem{AAP} C.\ A.\ Akemann, J.\ Anderson, and G.\ K.\ Pedersen, Excising states of C*-algebras, {\em Canad.\ J.\ Math.\ \bf 38} (1986), 1239-1260.

\bibitem{AkWeav}  C.\ A.\  Akemann and N.\ Weaver, $B(H)$ has a pure state that is not multiplicative on any masa, {\em Proc.\ Natl.\ Acad.\ Sci.\ USA \bf 105} (2008), 5313-5314.

\bibitem{And}  J.\  Anderson, A conjecture concerning the pure states of $B(H)$ and a related theorem, pp.\ 27-43 in {\em Topics in Modern Operator Theory (Timi\c{s}oara/Herculane, 1980)},  Operator Theory: Adv.\ Appl.\ {\bf 2}, Birkh\"{a}user, 1981.

\bibitem{And1}  J.\ Anderson, Extensions, restrictions, and representations of states on C*-algebras, {\em Trans.\ Amer.\ Math.\ Soc.\  \bf 249} (1979),  303-329.

\bibitem{Bice}  T.\ Bice, Filters in C*-algebras, {\em Canad.\ J.\ Math.\ \bf 65} (2013), 485--509.
 
\bibitem{blueda}   D.\ P.\ Blecher and L.\ E.\ Labuschagne, Ueda's peak set theorem for general von Neumann algebras, preprint (2016).

\bibitem{BH}  L.\ J.\ Bunce and J.\ Hamhalter, Jauch-Piron states on von Neumann algebras, {\em Math.\ Z.\ \bf 215} (1994), 491-502.

\bibitem{Drake} F.\ R.\ Drake, {\em Set Theory: An Introduction to Large
Cardinals}, Studies in Logic and the Foundations of Mathematics {\bf 76},
North-Holland, 1974.

\bibitem{FW}  I.\ Farah and E.\ Wofsey, Set Theory and Operator Algebras, in {\em Appalachian Set Theory: 2006-2012} (J.\ Cummings and E.\ Schimmerling, eds), London Math.\ Soc.\ Lecture Note Series {\bf 406}, Cambridge University Press, 2012.

\bibitem{Ham0} J.\ Hamhalter, Pure Jauch-Piron state on von Neumann algebras, {\em Ann.\ Inst.\ H.\ Poincar\'{e} Th\'{e}or.\ \bf 58} (1993), 173-187.

\bibitem{Ham} {---------}, {\em Quantum Measure Theory,} Fundamental Theories of Physics {\bf 134}, Kluwer Academic Publishers Group, 2003.

\bibitem{Jech} T.\ Jech, {\em Set Theory}, Springer, 2002.

\bibitem{KS}  R.\ V.\ Kadison and I.\ Singer, Extensions of pure states, {\em Amer.\ J.\ Math.\ \bf 81} (1959), 383-400.

\bibitem{KK} A.\ Kumar and K.\ Kunen, Induced ideals in Cohen and random extensions, {\em Topology Appl.\ \bf 174} (2014), 81-87.

\bibitem{MSS} A.\ W.\ Marcus, D.\ A.\ Spielman, and N.\ Srivastava, Interlacing families II: Mixed characteristic polynomials and the Kadison-Singer problem, {\em Ann.\ of Math.\ \bf 182} (2015), 327-350.

\bibitem{Neufang}
M.\ Neufang, On Mazur's property and property (X), {\it J.\ Operator Theory
\bf 60} (2008), 301-316.

\bibitem{Prager} M.\ Pr\'{a}ger, Uber ein Konvergenzprinzip im Hilbertschen Raum (Russian), {\em Czechoslovak Math.\ J.\ \bf 10} (1960), 271-282.

\bibitem{Rao} K.\ P.\ S.\ Bhaskara Rao and M.\ Bhaskara Rao, {\em Theory of Charges: A Study of Finitely Additive Measures}, Pure and Applied Mathematics {\bf 109}, Academic Press, 1983.
 
\bibitem{Tak}   M.\ Takesaki, {\em Theory of Operator Algebras I}, Springer, 1979.

\bibitem{Ueda} Y.\ Ueda, On peak phenomena for non-commutative $H^\infty$, {\em Math.\ Ann.\ \bf 343} (2009), 421-429.

\bibitem{Weav1} N.\ Weaver, Set theory and C*-algebras, {\em Bull.\ Symbolic Logic \bf 13} (2007),  1-20.
\end{thebibliography}
\end{document}